\DeclareMathOperator{\hyphen}{-}
\newcommand{\half}{\frac{1}{2}}
\newcommand{\todo}[1]{{}}
\newcommand{\real}[1]{\mathrm{Re}\left( #1 \right)}
\newcommand{\imag}[1]{\mathrm{Im}\left( #1 \right)}
\newtheorem{lemma}{Lemma}
\newtheorem{theorem}{Theorem}
\newtheorem{remark}{Remark}
\newcommand{\lastCell}{\ensuremath{I_N}}
\newcommand{\lastCellLinfy}{\ensuremath{L^\infty(\lastCell)}}
\newcommand{\lastCellWinfx}[1]{\ensuremath{W^{#1, \infty}(\lastCell)}}
\newcommand{\LinfC}{\ensuremath{C^*}}
\newcommand{\CoercivitySymbol}{\ensuremath{\alpha}}
\newcommand{\BoundednessSymbol}{\ensuremath{\gamma}}
\newcommand{\eDerivativeBound}[1]{\ensuremath{
    \left|e_1\right|
    \exp\left(\half |b|\right)
    2^{#1 + 2}
    \max\left(2, \dfrac{2}{|D|\Delta y}\right)
    |D|^{#1}
}}
\newcommand{\COne}{\ensuremath{
16
\left(
\max\left(1, \left(\dfrac{1}{2} b\right)^{m + 2}\right)
\exp\left(\half b L\right)
\right)^2
\max(\hat{C}_m, \hat{C}_{m + 1}) ((m + 3)!)^2
}}
\newcommand{\CTwo}{\ensuremath{
\max\left(
1,
(m + p + 1)\hat{C}(m + p, n) \left(\LinfC + \dfrac{1}{(m + p)!}\right)
\right)
}}
\newcommand{\CFour}{\ensuremath{
2 ((1 + m + p)(1 + \hat{C}(m + p, n)))
2^{m + p + 2}
\max\left(1, \sqrt{C_1}\right)
\exp\left(\half |b|\right)
}}
\newcommand{\CSixSymbolic}{\ensuremath{
C_6([0, L], \vec{b}, c, n, C(f), C_R(b_0), C_T)
}}
\newcommand{\CEightSymbolic}{\ensuremath{
C_8([0, L], f, C_7, \vec{b}, c, p, n)
}}
\tikzset{cross/.style={cross out, draw=black, minimum size=2*(#1-\pgflinewidth),
inner sep=0pt, outer sep=0pt},
cross/.default={2.5pt}}
\journal{ArXiv}
\begin{document}

\begin{frontmatter}



\title{Using \texorpdfstring{\(p\)}{\emph{p}}-Refinement to Increase Boundary Derivative Convergence Rates}

\author[rpi]{David\corref{cor1}\fnref{rtgthanks} Wells}
\ead{wellsd2@rpi.edu,daverwells@gmail.com}
\author[rpi]{Jeffrey Banks}
\ead{banksj3@rpi.edu}

\cortext[cor1]{Corresponding author.}
\address[rpi]{Department of Mathematical Sciences,
Rensselaer Polytechnic Institute, Troy, NY USA}

\fntext[rtgthanks]{This work was supported in part by the NSF through grant
DMS-1344962.}

\begin{abstract}
    Many important physical problems, such as fluid structure interaction or
    conjugate heat transfer, require numerical methods that compute boundary
    derivatives or fluxes to high accuracy. This paper proposes a novel approach
    to calculating accurate approximations of boundary derivatives of elliptic
    problems. We describe a new continuous finite element method based on
    \(p\)-refinement of cells adjacent to the boundary that increases the local
    degree of the approximation. We prove that the order of the approximation on
    the \(p\)-refined cells is, in 1D, determined by the rate of convergence at
    the mesh vertex connecting the higher and lower degree cells and that this
    approach can be extended, in a restricted setting, to 2D problems. The
    proven convergence rates are numerically verified by a series of experiments
    in both 1D and 2D. Finally, we demonstrate, with additional numerical
    experiments, that the \(p\)-refinement method works in more general
    geometries.
\end{abstract}

\begin{keyword}
    Finite Elements, Superconvergence, Elliptic Equations, Numerical Analysis, Scientific Computing
\end{keyword}

\end{frontmatter}


\section{Introduction}
    \label{sec:introduction}
    Simulation of many important physical problems, such as fluid structure
    interaction and conjugate heat transfer, requires numerical methods that
    compute boundary derivatives or fluxes to high accuracy. In some
    circumstances the only desired result of a calculation is a quantity derived
    from the boundary derivatives, such as a flux or stress: this problem has
    long been recognized as one of importance, and a variety of methods (see,
    e.g., \cite{CareyApproximateBoundaryFlux84, GalerkinProcedureFluxBoundary,
    WheelerGalerkinProcedureFluxBVP1973}) have been proposed that allow
    reconstruction of an accurate boundary flux from less accurate interior
    data. Accurate boundary derivatives are also required for some numerical
    boundary conditions. For example, in \cite{LiStablePartitionedFSIBeams} the
    authors presented a new discrete boundary condition for a fluid-structure
    interaction problem based on matching accelerations, instead of velocities,
    and obtained a traction boundary condition involving second derivatives of
    the fluid velocity. This boundary condition was the key ingredient in a new
    partitioned algorithm that was high-order, partitioned, and stable without
    subiterations. While standard in the finite difference community (see, e.g.,
    \cite{BanksAMPCompressibleNonlinear, LiStablePartitionedFSIBeams}) these
    equations, usually called \emph{compatibility boundary conditions}, are not
    commonly used in finite element methods, though they have appeared in
    some recent work \cite{banksGalerkinDifference}.

    A variety of algorithms have been proposed for calculating higher order
    derivative values from lower order data calculated by a finite element
    method (see, e.g., \cite{CareyDerivativeFromFEM1982,
    QuasiProjectionAnalysis1978, WheelerGalerkinProcedureFluxBVP1973,
    ZhangNewGradientRecovery2005, ZienkiewiczVolume1}): most of these algorithms
    rely on \emph{data post-processing}, where one uses least squares or other
    fitting procedure to fit a higher-degree polynomial through known
    superconvergence points, as discussed in
    \cite{BabuskaStrouboulisFEReliability}. Another class of methods relies on
    the application of high-order finite difference stencils to data derived on
    either a uniform or quasi-uniform grid \cite{GuoHessianRecovery2016}. A
    common feature of several postprocessing techniques is that they require a
    grid satisfying some smoothness condition: without such a condition,
    the error in the solution may be dominated by pollution error from
    grid irregularities; see Chapter 4 of \cite{BabuskaStrouboulisFEReliability}
    for additional information on the impact of grid regularity. In particular,
    of the three most common versions of the finite element method
    (\(h\)-refinement based, \(p\)-refinement based, and \(hp\)-refinement
    based) these postprocessing methods are almost always based on estimates
    from the \(h\)-refinement version.

    This paper proposes a novel alternative to current techniques. We present a
    boundary cell \emph{\(p\)-refinement} (i.e., locally increasing the degree
    of the approximation space) strategy to improve the accuracy of boundary
    derivatives instead of postprocessing the solution. The numerical
    experiments in Section \ref{sec:numerical-results} use Lagrange
    \(p\)-refinement to increase the local approximation degree: a possible
    alternative to this is to add degrees of freedom corresponding to normal
    derivatives on the boundary. This \(p\)-refinement results in higher rates
    of convergence in the normal derivatives along the boundary. The theoretical
    results are based on the two-dimensional linear
    convection-diffusion-reaction problem
    \begin{equation}
        \label{eq:cdr-equation}
        -\Delta u + \vec{b} \cdot \nabla u + c u = f
    \end{equation}
    with homogeneous Dirichlet boundary conditions in \(y\), periodic boundary
    conditions in \(x\), normalized viscosity, constant advection velocity
    \(\vec{b}\), constant reaction rate \(c > 0\) (which is the standard
    well-posedness assumption; see Lemma 5.1 in \cite{WellsThesis2015} or
    Chapters 3 and 4 of \cite{Roos2008robust} for further discussion and
    justification), and forcing \(f\).
    \begin{figure}
        \centering
        \include{p-refine}
        \caption{Two different implementations of \(p\)-refinement for boundary
        cells adjacent to interior bilinear cells, where the finite element
        spaces are chosen as nodal interpolants. The diagram on the left is of
        \(Q^1\) elements adjacent to \(Q^4\) elements: the degrees of freedom
        with support points along the two common faces would ordinarily be
        constrained in a way that makes the solution continuous. The scheme
        proposed in Section \ref{sec:extension-to-2d} uses a similar procedure
        to constrain \emph{all} such nonnormal degrees of freedom on each
        boundary cell, effectively reducing the local approximation space
        to tensor products of \(P^1(x)\) and \(P^4(y)\). Since the degree
        of the approximation in the normal direction determines the derivative
        convergence rates, one could obtain the same effect by adding degrees of
        freedom corresponding to normal derivatives on the boundary instead of
        doing Lagrange \(p\)-refinement.}
        \label{fig:show-nonnormal-elimination}
    \end{figure}

    Our main goal is to improve the accuracy of the finite element approximation
    to Equation \eqref{eq:cdr-equation}'s normal derivatives along the Dirichlet
    boundary, notated as \(\partial \Omega\). We will improve the accuracy by
    performing \(p\)-refinement in the direction normal to \(\partial \Omega\).
    \emph{\(p\)-refinement in the normal direction} means that, rather
    than standard \(Q^m\) elements, the polynomial space on each boundary cell
    is \(P^m \otimes P^{m + p}\); i.e., a tensor product between degree \(m\)
    polynomials in the tangential direction and degree \(m + p\) polynomials in
    the normal direction. Notate these spaces by \(Q^{(m, m + p)}\),
    where \(p \geq 0\). An example implementation of this type of
    \(p\)-refinement is shown on the right in Figure
    \ref{fig:show-nonnormal-elimination}, with numerical results shown in Figure
    \ref{fig:square-periodic-domain-convergence-motivation}. The error estimates
    proven in this paper rely on the tensor product discretization and, as a
    result, provide convergence rates at mesh vertices. Hence, the numerical
    experiments in Section \ref{sec:numerical-results} show convergence rates
    computed in the \(H^1{\hyphen}B\) and \(H^2{\hyphen}B\) seminorms, which are
    defined in terms of the gradient, Hessian, and normal derivatives along the
    non-periodic boundary:
    \begin{align}
        \label{eq:h1-b-error-definition}
        |u - u^h|_{H^1{\hyphen}B}
        &=
        \max_{i, j}
        \left|
        (\nabla (u - u^h) \cdot \vec{n})(\delta_i, \delta_j)
        : (\delta_i, \delta_j)
        \text{ are cell vertices on the boundary}
        \right|                                                               \\
        \label{eq:h2-b-error-definition}
        |u - u^h|_{H^2{\hyphen}B}
        &=
        \max_{i, j}
        \left|
        (\vec{n}^T \nabla^2 (u - u^h) \vec{n})(\delta_i, \delta_j)
        : (\delta_i, \delta_j)
        \text{ are cell vertices on the boundary}
        \right|.
    \end{align}

    \begin{figure}[!htb]
        \centering
        \includegraphics[width=3.2in]{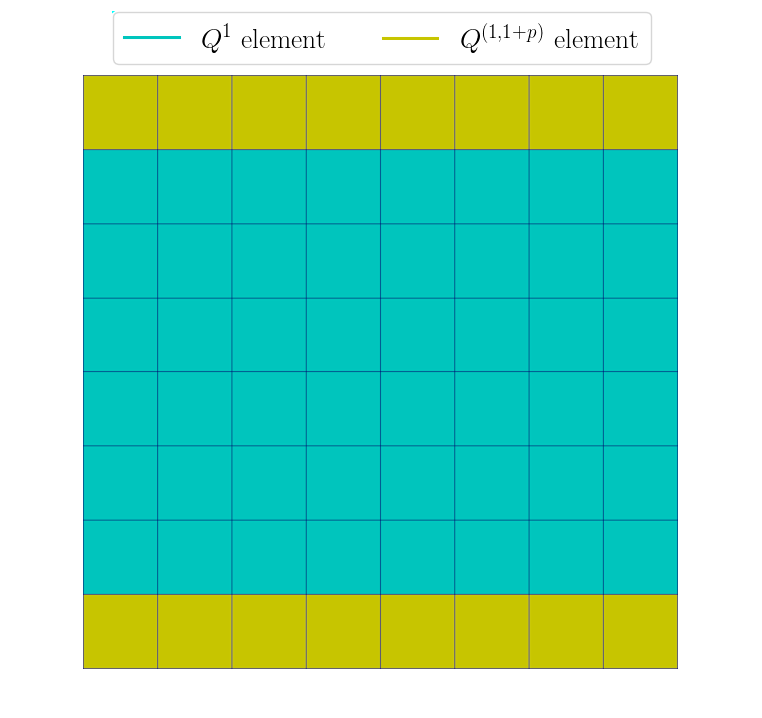}
        \includegraphics[width=3.2in]{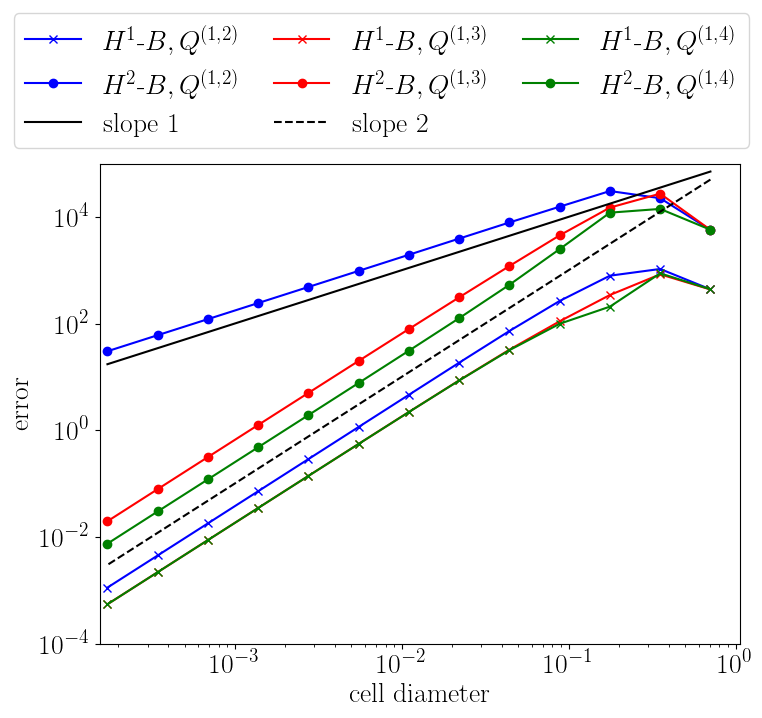}

        \caption{Convergence rates for a numerical approximation of
        \eqref{eq:cdr-equation} on a domain with periodic boundary
        conditions in the \(x\) direction and Dirichlet boundary conditions in
        the \(y\) direction. The grid on the left depicts which cells have been
        \(p\)-refined; the plot on the right shows convergence rates in the
        seminorms defined by Equations
        \eqref{eq:h1-b-error-definition}-\eqref{eq:h2-b-error-definition}. The
        cyan cells have \(Q^1\) (bilinear) shape functions; the yellow cells
        have been \(p\)-refined in the normal direction (i.e., they are \(P^m
        \otimes P^{m + p}\) elements, notated as \(Q^{(m, m + p)}\)).}
        \label{fig:square-periodic-domain-convergence-motivation}
    \end{figure}

    The proposed \(p\)-refinement method does not neatly fit into the usual
    taxonomy of the three common versions of the finite element method. The
    proven convergence rates depend on the cell diameters but use multiple
    (fixed) polynomial orders in the domain, which resembles \(p\)-refinement:
    however, this is not a \(p\)-refinement method since the refinement in \(p\)
    is static (i.e., only cells adjacent to the nonperiodic boundary are
    \(p\)-refined, and the degrees \(m\) and \(m + p\) are fixed during grid
    refinement) and not dependent on any a-posteriori error or regularity
    estimator. As such, this method is not well described by the standard
    \(hp\)-finite element error estimate \cite{AinsworthHPAspects}
    \begin{equation}
        \label{eq:standard-hp-convergence}
        \|u - u^{hp}\|_{H^1(\Omega)} \leq
        C h^\mu p^{-(m - 1)} \|u\|_{H^m(\Omega)}
    \end{equation}
    for elliptic problems in an energy norm, where \(\mu = \min(p, m - 1)\),
    \(p\) is the polynomial order, and the grid is quasiuniform with cell
    diameter \(h\). Additionally, this method is not well-described by the
    classic \(p\)-refinement estimate
    \begin{equation}
        \|u - u^h\|_{H^1} \leq C p^{-(m - 1)} \|u\|_{H^m}
    \end{equation}
    given in \cite{BabuskaOptimalConvergencePVersion} since the degree
    \(p\) is fixed during the refinement process.

    The rest of this paper is organized as follows: Section
    \ref{sec:oned-analysis} presents the essential theory for a one-dimensional
    version of the proposed scheme, including proofs showing that performing
    \(p\)-refinement on boundary cells improves the boundary derivative
    convergence rates. Section \ref{sec:extension-to-2d} extends these
    results to higher dimensions: for structured grids and suitable finite
    elements, one can recover essentially the same convergence results from the
    1D case at mesh vertices along the boundary of the domain. Finally,
    Section \ref{sec:numerical-results} summarizes some numerical
    experiments that demonstrate the proven asymptotic convergence rates and
    show that the results are still valid in a simple but non-Cartesian
    geometry. Section \ref{sec:conclusions} concludes with a summary of the
    presented results and some conjectures regarding possible extensions of this
    work.

\section{One-dimensional analysis of the model problem}
    \label{sec:oned-analysis}
    \subsection{Introduction}
        The numerical scheme for the model problem described in Section
        \ref{sec:introduction} may be analyzed, in part, by applying an analog
        of the discrete Fourier transform in the periodic direction to reduce it
        to a one-dimensional problem. Therefore, we begin our analysis of
        the discretization of Equation \eqref{eq:cdr-equation} by analyzing the
        simpler model
        \begin{equation}
            \label{eq:cdr-equation-oned}
            -u_{yy} + b u_y + \tilde{c} u = \tilde{f}(y)
        \end{equation}
        with homogeneous Dirichlet boundary conditions and, having
        applied a discrete Fourier transform, \(\real{\tilde{c}} > 0\) and
        \(\tilde{c}\) and \(\tilde{f}(y)\) may be complex. The primary result of
        this section is an analysis of the \(p\)-refinement scheme in a single
        space dimension for Equation \eqref{eq:cdr-equation-oned}.

        It is well known (a full proof is given in
        \cite{TwoPointBVPConvergence}) that, for Equation
        \eqref{eq:cdr-equation-oned}, the rate of convergence of a finite
        element approximation consisting of continuous piecewise polynomials of
        order \(m\) at a mesh vertex (a point connecting two cells) \(y_j\) is
        \begin{equation}
            \label{eq:douglas-dupont-vertex-rate}
            |u^h(y_j) - u^h(y_j)| = O(h^{2 m}),
        \end{equation}
        where \(m\) is the degree of the polynomial space used in the two
        adjacent cells. Subsection
        \ref{subsec:vertex-adjacent-boundary-convergence} presents a special case
        of this theorem for the last interior vertices (i.e., the interior vertices of
        the two boundary cells), showing that the approximation gains one
        additional order of accuracy at these two points. This is significantly
        better than the standard superconvergence result at the Gauss-Lobatto
        points of the function value of (see, e.g., the table in Section 1.10 of
        \cite{WahlbinSuperconvergence})
        \begin{equation}
            \label{eq:general-lobatto-superconvergence}
            |u^h(y_g) - u^h(y_g)| = O(h^{m + 2}).
        \end{equation}
        The authors of \cite{TwoPointBVPConvergence} note that one could perform
        local \(p\)-refinement and achieve higher order accuracy on a specific
        cell due to the higher convergence order at the mesh vertices. This is the
        primary idea used to achieve higher order derivative boundary
        convergence. The proofs of these results rely on computations with
        Greens' functions and, as such, do not have immediate extensions to
        higher dimensions due to the nonintegrability of higher-dimension
        Greens' functions (see the discussion regarding Greens' functions in
        \cite{BabuskaStrouboulisFEReliability} for additional information on the
        limitations of this approach).

    \subsection{Well-posedness of the system}
        This subsection presents some basic analysis of Equation
        \eqref{eq:cdr-equation-oned}. Assuming that the solution is
        complex-valued since the forcing and low-order coefficient may be
        complex, let \(H^1_0([0, L])\) be the Hilbert space of complex-valued
        functions whose derivatives and function values are square-integrable on
        \([0, L]\) and have a value of zero at \(0\) and \(L\). Consider
        the sesquilinear and skew-linear forms associated with Equation
        \eqref{eq:cdr-equation-oned}:
        \begin{align}
            \label{eq:cdr-complex-sesquilinear}
            a(\phi, \psi)
            &=
            \int_0^L \phi_y \bar{\psi}_y dy
            +
            b \int_0^L \phi_y \bar{\psi} dy
            +
            \tilde{c}
            \int_0^L \phi \bar{\psi} dy                                       \\
            \label{eq:cdr-system-skew-linear}
            l(\psi)
            &=
            \int_0^L \tilde{f} \bar{\psi} dy.
        \end{align}
        The weak problem is, for a Hilbert space \(X \subseteq H_0^1([0, L])\),
        finding \(z \in X\) such that, for all \(\psi \in X\)
        \begin{equation}
            \label{eq:cdr-complex-problem}
            a(z, \psi) = l(\psi).
        \end{equation}

        \begin{theorem}
            \label{thm:complex-oned-wellposedness}
            The weak problem given by Equation \eqref{eq:cdr-complex-problem}
            and a Hilbert space \(X \subseteq H^1_0([0, L])\) is well-posed when
            \(\real{\tilde{c}} > 0\).
        \end{theorem}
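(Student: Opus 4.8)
The plan is to establish well-posedness by verifying the hypotheses of the complex (sesquilinear) version of the Lax--Milgram theorem for the form $a$ and the antilinear functional $l$ on $X$. Since $X$ is a closed subspace of $H_0^1([0, L])$, it inherits the $H^1$ inner product and is itself a Hilbert space, so it suffices to show that $a$ is bounded and coercive on $X$ and that $l$ is bounded on $X$; Lax--Milgram then produces a unique $z \in X$ solving \eqref{eq:cdr-complex-problem} together with continuous dependence on the data. Boundedness of $a$ and $l$ is routine: applying the Cauchy--Schwarz inequality to each term of \eqref{eq:cdr-complex-sesquilinear} gives $|a(\phi, \psi)| \leq \BoundednessSymbol \|\phi\|_{H^1} \|\psi\|_{H^1}$ with $\BoundednessSymbol$ depending only on $|b|$ and $|\tilde{c}|$, and $|l(\psi)| \leq \|\tilde{f}\|_{L^2} \|\psi\|_{H^1}$, assuming $\tilde{f} \in L^2([0, L])$.

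The crux of the argument is coercivity, and the key observation is that the advection term contributes nothing to the real part of $a(\phi, \phi)$. Writing $\tfrac{d}{dy}|\phi|^2 = 2\,\real{\phi_y \bar{\phi}}$ and integrating over $[0, L]$, the homogeneous Dirichlet conditions $\phi(0) = \phi(L) = 0$ force $\real{\int_0^L \phi_y \bar{\phi}\, dy} = \half(|\phi(L)|^2 - |\phi(0)|^2) = 0$. Because $b$ is real, the advection term therefore drops out of $\real{a(\phi, \phi)}$ entirely, leaving $\real{a(\phi, \phi)} = \|\phi_y\|_{L^2}^2 + \real{\tilde{c}}\,\|\phi\|_{L^2}^2$. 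Since $\real{\tilde{c}} > 0$, this is bounded below by $\min(1, \real{\tilde{c}})\,\|\phi\|_{H^1}^2$, and using $|a(\phi, \phi)| \geq \real{a(\phi, \phi)}$ we obtain coercivity with $\CoercivitySymbol = \min(1, \real{\tilde{c}})$.

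With all three hypotheses in hand, the complex Lax--Milgram theorem yields a unique $z \in X$ satisfying \eqref{eq:cdr-complex-problem}, and taking $\psi = z$ gives the stability bound $\CoercivitySymbol \|z\|_{H^1}^2 \leq \real{a(z, z)} = \real{l(z)} \leq \|\tilde{f}\|_{L^2}\|z\|_{H^1}$, hence $\|z\|_{H^1} \leq \CoercivitySymbol^{-1}\|\tilde{f}\|_{L^2}$, which establishes well-posedness. The only genuine obstacle is the coercivity estimate; once the advection term is recognized as vanishing from the real part (and here the realness of $b$ is essential, since a nonzero imaginary part of $b$ would leave an uncontrolled contribution), everything else is immediate. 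If one wishes to avoid citing a complex Lax--Milgram theorem directly, an alternative is to decompose $a$ and the unknown into real and imaginary parts and apply the real Babu\v{s}ka--Lax--Milgram theorem on $X_{\mathbb{R}} \times X_{\mathbb{R}}$, but the sesquilinear formulation is cleaner and I would prefer it.
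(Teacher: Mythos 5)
Your proposal is correct and follows essentially the same route as the paper: boundedness via Cauchy--Schwarz with $\BoundednessSymbol = 1 + |b| + |\tilde{c}|$, coercivity by observing that the real part of the advection term integrates to zero under the homogeneous Dirichlet conditions (the paper phrases this via the decomposition $\phi = \phi_R + \phi_I I$, you via $\tfrac{d}{dy}|\phi|^2$, but it is the same cancellation), yielding $\CoercivitySymbol = \min(1, \real{\tilde{c}})$, and then the complex Lax--Milgram theorem. Your explicit remarks on the boundedness of $l$ and the stability bound are details the paper leaves implicit, not a different argument.
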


        \begin{proof}
            This theorem immediately follows from the complex-valued version of
            the Lax-Milgram Theorem; see Chapter 6, Theorem 6 of
            \cite{LaxFunctionalAnalysis}. Since
            \begin{subequations}
                \begin{align}
                    |a(\phi, \psi)|
                    &=
                    \left|
                    \int_0^L \phi_y \bar{\psi}_y dy
                    +
                    b \int_0^L \phi_y \bar{\psi} dy
                    +
                    \tilde{c}
                    \int_0^L \phi \bar{\psi} dy
                    \right|                                                   \\
                    &\leq
                    \BoundednessSymbol
                    \|\phi\|_{H^1}
                    \|\psi\|_{H^1}, \forall \phi, \psi \in H^1_0([0, L]),
                \end{align}
            \end{subequations}
            the sesquilinear form is bounded with boundedness constant
            \begin{equation}
                \label{eq:cdr-boundedness-constant}
                \BoundednessSymbol = (1 + |b| + |\tilde{c}|).
            \end{equation}
            To show coercivity, decompose \(\phi(y)\) and \(\tilde{c}\)
            into real and imaginary components
            \begin{equation}
                \phi = \phi_R + \phi_I I
                \text{ and }
                \tilde{c} = \real{\tilde{c}} + \imag{\tilde{c}} I
            \end{equation}
            where, to avoid confusion with the mesh vertex index \(i\), notate
            \(\sqrt{-1} = I\) and obtain, again \(\forall \phi \in H^1_0([0,
            L])\),
            \begin{subequations}
                \begin{align}
                    |a(\phi, \phi)|
                    &\geq
                    |\mathrm{Re}(a(\phi, \phi))|                              \\
                    &=
                    \left|
                    \left\|\phi_y\right\|_{L^2}^2 +
                    b\int_0^L
                    \left(\phi_R \phi_{R,y} + \phi_I \phi_{I, y}\right) dy +
                    \real{\tilde{c}} \left\|\phi\right\|_{L^2}^2
                    \right|                                                   \\
                    &\geq
                    \CoercivitySymbol
                    \|\phi\|_{H^1}^2
                \end{align}
            \end{subequations}
            due to the assumption of homogeneous boundary conditions with
            coercivity constant
            \begin{equation}
                \label{eq:cdr-coercivity-constant}
                \CoercivitySymbol = \min(1, \real{\tilde{c}}).
            \end{equation}
            Applying the Lax-Milgram Theorem yields the stated result.
        \end{proof}

    \subsection{Superconvergence at vertices near the boundary}
        \label{subsec:vertex-adjacent-boundary-convergence}
        The primary result of this section, Theorem
        \ref{thm:last-interior-vertex-superconvergence}, depends on the rate of
        convergence of the interpolation of a smooth function on a single cell
        in a finite element discretization: in particular, the scaling of the
        derivatives of the interpolated function and the diameter of the cell
        play a key role in Theorem
        \ref{thm:last-interior-vertex-superconvergence}. Hence, this
        subsection begins with a special case of the Bramble-Hilbert lemma
        (see, e.g., \cite{CiarletBook} for a proof of the general case):

        \begin{lemma}
            \label{lem:interpolation-h1}
            Let \(K = [a, a + \Delta y]\) be an interval with diameter \(\Delta
            y\).
            Let \(v \in V\) where \(V = W^{m + 1, \infty}(K)\). Consider a
            finite element space \(V^h \subset H^1(K), V^h = P^m(K)\), where
            \(P^m(K)\) is the space of degree \(m\) polynomials on the interval
            \(K\).
            Define the interpolation operator \(\Pi : V \rightarrow
            V^h\) such that
            \begin{equation}
                \label{eq:cellwise-interpolation}
                (\Pi u)(\xi_j) = u(\xi_j), \Pi u \in V^h
                \text{ where }
                \xi_j = a + \dfrac{j}{m} \Delta y, j = 0, 1, \cdots, m.
            \end{equation}
            Then \(v^h = \Pi v\) satisfies the error estimate
            \begin{equation}
                \| v - v^h \|_{H^1(K)}^2
                \leq
                \hat{C} \Delta y^{2 m + 1}
                \| v^{(m + 1)} \|_{L^\infty(K)}^2
            \end{equation}
            where \(\hat{C}\) is independent of \(\Delta y\) and \(v\).
        \end{lemma}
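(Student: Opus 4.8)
The plan is to collapse the estimate onto a single fixed reference interval by an affine change of variables, invoke the general Bramble-Hilbert lemma there, and then track how the powers of $\Delta y$ reappear when the scaling is undone. Introduce the reference interval $\hat K = [0,1]$ together with the affine map $y = a + \Delta y\,\hat y$, and set $\hat v(\hat y) = v(a + \Delta y\,\hat y)$. The chain rule gives $\hat v^{(k)}(\hat y) = \Delta y^{k}\, v^{(k)}(y)$, so in particular $\|\hat v^{(m+1)}\|_{L^\infty(\hat K)} = \Delta y^{m+1}\,\|v^{(m+1)}\|_{L^\infty(K)}$. Because the nodes $\xi_j$ map exactly to the fixed equispaced points $j/m$ on $\hat K$, the interpolation operator commutes with the scaling: $\widehat{\Pi v} = \hat\Pi \hat v$, where $\hat\Pi$ is the degree-$m$ Lagrange interpolant on $\hat K$.

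Since $\hat\Pi$ reproduces every polynomial in $P^m(\hat K)$, the operator $I - \hat\Pi$ annihilates $P^m(\hat K)$, so the general Bramble-Hilbert lemma (in the form cited from \cite{CiarletBook}) applies on the fixed interval $\hat K$ and yields a constant $\hat C_0$, independent of $\hat v$, with
\[
    \|\hat v - \hat\Pi\hat v\|_{H^1(\hat K)}
    \leq \hat C_0\, |\hat v|_{H^{m+1}(\hat K)}
    \leq \hat C_0\, \|\hat v^{(m+1)}\|_{L^\infty(\hat K)},
\]
where the second inequality uses that $\hat K$ has unit length, so $|\hat v|_{H^{m+1}(\hat K)} = \|\hat v^{(m+1)}\|_{L^2(\hat K)} \leq \|\hat v^{(m+1)}\|_{L^\infty(\hat K)}$. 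Consequently both $\|\hat v - \hat\Pi\hat v\|_{L^2(\hat K)}$ and $\|(\hat v - \hat\Pi\hat v)'\|_{L^2(\hat K)}$ are bounded by $\hat C_0\,\Delta y^{m+1}\,\|v^{(m+1)}\|_{L^\infty(K)}$.

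It remains to undo the scaling in the norms. Writing $e = v - v^h$ and $\hat e = \hat v - \hat\Pi\hat v$, a direct substitution gives $\|e\|_{L^2(K)}^2 = \Delta y\,\|\hat e\|_{L^2(\hat K)}^2$, while differentiating in $y$ introduces a factor $1/\Delta y$, so $\|e'\|_{L^2(K)}^2 = \Delta y^{-1}\,\|\hat e'\|_{L^2(\hat K)}^2$. Inserting the reference-element bounds produces one term of order $\Delta y^{2m+3}$ from the $L^2$ part and one of order $\Delta y^{2m+1}$ from the derivative part; for $\Delta y$ bounded above (by the mesh size, or simply taking $\Delta y \le 1$) the first is dominated by the second, and summing yields $\|e\|_{H^1(K)}^2 \leq \hat C\,\Delta y^{2m+1}\,\|v^{(m+1)}\|_{L^\infty(K)}^2$ with $\hat C$ independent of $\Delta y$ and $v$.

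The step I expect to require the most care is the power counting in the final paragraph: the single negative power of $\Delta y$ produced by the chain rule on the $H^1$-seminorm is precisely what converts the naive $\Delta y^{2m+2}$ into the sharp $\Delta y^{2m+1}$. One must therefore keep the $L^2$ and seminorm contributions separate rather than bounding them by a common power, and must record that $\hat C$ absorbs the fixed upper bound on $\Delta y$ used to discard the higher-order term.
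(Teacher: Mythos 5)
Your proof is correct and follows essentially the same route as the paper: the paper simply cites Theorem 3.1.5 of Ciarlet (whose proof is precisely your affine-scaling-plus-Bramble--Hilbert argument) once for the $L^2$ norm and once for the $H^1$ seminorm and sums the squares, which is exactly your separate power counting of $\Delta y^{2m+3}$ and $\Delta y^{2m+1}$. Your closing observation that the constant absorbs a fixed upper bound on $\Delta y$ is the only point the paper leaves implicit.
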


        \begin{proof}
            This lemma follows immediately from applying Theorem 3.1.5 of
            \cite{CiarletBook} with parameters \(m = 0\), \(q = 2\), and \(p =
            \infty\) and \(m = 1\), \(q = 2\), and \(p = \infty\) and then
            summing the squares of the results.
        \end{proof}

        \begin{remark}
            The same result holds for more general interpolation operators that
            also interpolate derivative values of the function (i.e., for
            Hermite-type elements), or for nonuniform point distributions (e.g.,
            using the Gauss-Lobatto points as nodes).
        \end{remark}

        Theorem \ref{thm:last-interior-vertex-superconvergence} is a special
        case for the mesh vertex located at \(L - \Delta y\) (i.e., the last
        interior vertex) of the more general vertex convergence rate proven in
        \cite{TwoPointBVPConvergence}.
        \begin{theorem}
            \label{thm:last-interior-vertex-superconvergence}
            Consider a partition of \([0, L]\) into \(N\) cells of equal
            diameter \(\Delta y\).
            Let \(V^h \subset H^1_0([0, L])\) be the finite element space of
            piecewise continuous polynomials of degree \(m\) on interior cells
            and degree \(m + p\) on boundary cells, where \(p \geq 1\).
            Define the interpolation operator \(\Pi : H_0^1([0, L]) \rightarrow
            V^h\) on each cell in the same way as Equation
            \eqref{eq:cellwise-interpolation}, where interior cells have \(m +
            1\) interpolation points and boundary cells have \(m + p + 1\)
            interpolation points (including, in both cases, cell vertices).
            Let \(u^h(y) \in V^h\) be the solution to
            \eqref{eq:cdr-complex-problem} with \(X = V^h\). Let \(u(y) \in
            W^{m + 1, \infty}([0, L])\) be the solution of Equation
            \eqref{eq:cdr-complex-problem} with \(X = W^{m + 1, \infty}([0,
            L])\). Assume that
            \begin{equation}
                \label{eq:cdr-D-definition}
                D = \dfrac{\sqrt{b^2 + 4 \tilde{c}}}{2}, \real{D} \geq 1,
                \text{ and } \real{D} \geq |\imag{D}|.
            \end{equation}
            To simplify some inequalities, assume that
            \begin{subequations}
                \begin{align}
                    \label{eq:b-sign-assumption}
                    b &\geq 0                                                 \\
                    L &\geq 1.
                \end{align}
            \end{subequations}
            Then there exists a constant \(C_1(m, p) = C_1\) dependent only on
            \(m\) and \(p\) such that
            \begin{equation}
                | u(L - \Delta y) - u^h(L - \Delta y)|
                \leq
                \dfrac{\BoundednessSymbol^2}{\CoercivitySymbol}
                |D|^{m + 1}
                \sqrt{C_1} \|u^{(m + 1)}\|_{L^\infty([0, L])}
                \Delta y^{2 m + 1}
            \end{equation}
            where \(\BoundednessSymbol\) and \(\CoercivitySymbol\) are the
            boundedness and coercivity constants defined in Equations
            \eqref{eq:cdr-boundedness-constant} and
            \eqref{eq:cdr-coercivity-constant}.
        \end{theorem}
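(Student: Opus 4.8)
The plan is to estimate the pointwise error through a duality argument built on the adjoint Green's function for point evaluation at the last interior vertex $y_0 = L - \Delta y$. Concretely, let $G \in H^1_0([0,L])$ solve the adjoint problem $a(\phi, G) = \phi(y_0)$ for every $\phi \in H^1_0([0,L])$; this $G$ exists and is unique by the same Lax--Milgram argument used in Theorem \ref{thm:complex-oned-wellposedness}, applied to the adjoint form. Writing $e = u - u^h$ and taking $\phi = e$ gives the representation $e(y_0) = a(e, G)$. Because the boundary data are homogeneous and $V^h \subset H^1_0([0,L])$, Galerkin orthogonality for \eqref{eq:cdr-complex-problem} yields $a(e, \chi) = 0$ for every $\chi \in V^h$, so that $e(y_0) = a(e, G - \chi)$ for an arbitrary $\chi \in V^h$ that I am free to choose.

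First I would control the primal factor. Bounding with the boundedness constant \eqref{eq:cdr-boundedness-constant} gives $|e(y_0)| \le \BoundednessSymbol \|e\|_{H^1} \|G - \chi\|_{H^1}$, and C\'ea's lemma (a consequence of the coercivity \eqref{eq:cdr-coercivity-constant} and boundedness of $a$) reduces $\|e\|_{H^1}$ to a best-approximation error, $\|e\|_{H^1} \le (\BoundednessSymbol/\CoercivitySymbol)\inf_{v^h} \|u - v^h\|_{H^1}$. Taking $v^h$ to be the cellwise interpolant of $u$ and summing the per-cell estimate of Lemma \ref{lem:interpolation-h1} over the $N = L/\Delta y$ interior cells produces $\|e\|_{H^1} \le (\BoundednessSymbol/\CoercivitySymbol)\, C\, \|u^{(m+1)}\|_{L^\infty([0,L])}\, \Delta y^{m}$, where the assumption $L \ge 1$ keeps the constant tame and the degree-$(m+p)$ boundary cells contribute only higher-order terms. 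This already explains the factor $\BoundednessSymbol^2/\CoercivitySymbol$ and the single power of $\|u^{(m+1)}\|_{L^\infty}$ in the claimed bound, and it supplies one factor of $\Delta y^{m}$.

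The crux is the dual factor $\|G - \chi\|$, which must be shown to carry the extra power $\Delta y^{m+1}$ rather than the generic $\Delta y^{m}$; this is exactly where the $p \ge 1$ refinement of the boundary cell \lastCell{} enters, and it is the step I expect to be hardest. Since the coefficients are constant, the adjoint equation solves in closed form: $G$ is a piecewise combination of the exponentials with rates $-b/2 \pm \bar{D}$, where $D$ is the quantity in \eqref{eq:cdr-D-definition}, smooth on $[0, y_0]$ and on $[y_0, L]$ with a unit jump in its derivative at $y_0$. Differentiating this closed form $m+1$ times is what generates the factor $|D|^{m+1}$, while bounding the exponentials over $[0,L]$ generates the $\exp(\tfrac{1}{2} b L)$ factors appearing inside $C_1$. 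The essential observation is that the leading contribution to the pointwise error comes from the two cells meeting at $y_0$: a generic interior cell resolves $G$ only to order $\Delta y^{m}$, so an unrefined vertex sees the standard rate \eqref{eq:douglas-dupont-vertex-rate}, whereas on \lastCell{} the degree $m+p \ge m+1$ space resolves the smooth branch of $G$ to order $\Delta y^{m+1}$, removing the leading obstruction at $y_0$ and gaining one order.

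To make this precise I would choose $\chi$ cellwise as the interpolant of the appropriate smooth branch of $G$ — degree $m$ on interior cells and degree $m+p$ on \lastCell{} — so that $G - \chi$ vanishes at every mesh vertex, and then estimate $a(e, G - \chi)$ by splitting the integral at $y_0$ and treating \lastCell{} separately from the interior. The delicate part is showing that the interior contribution does not spoil the gain: one must exploit both the exponential decay of $G$ away from $y_0$ (so that the sum over interior cells converges geometrically and the constant stays independent of $N$, hence of $\Delta y$) and the cancellation available because $G$ solves the homogeneous adjoint ODE on each interior cell. Collecting the per-cell bounds from Lemma \ref{lem:interpolation-h1} with the degree-$m$ and degree-$(m+1)$ interpolation constants $\hat{C}_m, \hat{C}_{m+1}$, together with the derivative factorials up through order $m+3$, assembles the constant $C_1(m,p)$ and the remaining factor $|D|^{m+1}\Delta y^{m+1}$; multiplying the primal and dual factors then yields the stated estimate. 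I expect the bookkeeping that converts the boundary-cell $p$-refinement into precisely one extra power of $\Delta y$ — as opposed to a naive Cauchy--Schwarz bound, which stalls at $\Delta y^{2m}$ — to be the main obstacle, with the explicit Green's function computation the tool that overcomes it.
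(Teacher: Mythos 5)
Your overall architecture matches the paper's: represent the point error via the Green's function for evaluation at $y_0 = L - \Delta y$, use Galerkin orthogonality to subtract an interpolant, bound the primal factor by C\'ea's lemma plus the cellwise estimate of Lemma \ref{lem:interpolation-h1}, and bound the dual factor by interpolating $G$ cellwise. The handling of the primal factor and the origin of $\BoundednessSymbol^2/\CoercivitySymbol$, $|D|^{m+1}$, and the exponential factors in $C_1$ are all as in the paper. The gap is in the one step you yourself flag as hardest: your proposed mechanism for the interior contribution to $\|G - \chi\|_{H^1}$ does not deliver the extra power of $\Delta y$. You attribute the gain entirely to the $p$-refined last cell ``removing the leading obstruction at $y_0$'' and propose to control the interior cells via exponential decay of $G$ away from $y_0$ together with an unspecified cancellation from $G$ solving the homogeneous ODE. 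Decay alone falls one power short: each interior cell contributes $\hat{C}_m\|G^{(m+1)}\|_{L^\infty(K)}^2\Delta y^{2m+1}$ to $\|G-\chi\|_{H^1}^2$, and summing a geometric decay profile with rate $\mathrm{Re}(D)\Delta y$ over cells costs a factor of order $1/(\mathrm{Re}(D)\Delta y)$, leaving $\|G-\chi\|_{H^1} = O(\Delta y^{m})$ and a final bound of $O(\Delta y^{2m})$ --- the generic vertex rate, not the claimed $\Delta y^{2m+1}$.

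The missing fact, which is the heart of the paper's proof, is that the \emph{entire left branch} $G^L$ of the Green's function --- not just its tail --- has amplitude $O(\Delta y)$: because the source point $L-\Delta y$ sits within $\Delta y$ of the homogeneous Dirichlet boundary at $L$, the explicit coefficients \eqref{eq:green-c1}--\eqref{eq:green-c4} contain the factor $(\exp(-2D\Delta y)-1)/(2D)$, whose modulus is at most $\Delta y$ by \eqref{eq:o-of-dy-factor}, so that every derivative satisfies $\|G^{L,(n)}\|_{L^\infty([0,L-\Delta y])} \leq 4(n+1)!\,|D|^n\,\Gamma\,\Delta y$. This uniform extra factor of $\Delta y$ (squared in the $H^1$ estimate) is what makes the sum over the $N-1$ interior cells come out to $O(\Delta y^{2m+2})$; the boundary cell, where $G^R$ is only $O(1)$, is then handled exactly as you describe, by the degree-$(m+p)$ interpolation on a single cell of width $\Delta y$. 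Without identifying this amplitude smallness of $G^L$, your plan stalls; the ``cancellation from the homogeneous adjoint ODE'' would need to be developed into a genuinely different (and substantially harder) integration-by-parts argument to substitute for it.
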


        \begin{proof}
            This proof relies on some elementary inequalities that are
            true due to the bounds on \(|D|\) and \(L\) (for proofs, see
            Appendix \ref{sec:greens-inequalities}):
            \begin{subequations}
                \begin{align}
                    \label{eq:o-of-dy-factor}
                    \left|
                    \dfrac{\exp(-2 D \Delta y) - 1}{2 D}
                    \right|
                    &\leq \Delta y                                            \\
                    \label{eq:exp-sum-over-exp-bound}
                    \left|
                    \dfrac{\exp(2 D L) \pm \exp(2 D \Delta y)}
                    {\exp(2 D L) - 1}
                    \right|
                    &\leq 4                                                   \\
                    \label{eq:exp-sum-over-D-bound}
                    \left|
                    \dfrac
                    {
                    \exp(-D(L + \Delta y - y)) \pm
                    \exp(D(L - \Delta y - y))
                    }
                    {2 D}
                    \right|
                    &\leq
                    \dfrac{1}{|D|}.
                \end{align}
            \end{subequations}

            This proof follows the outline of the classic superconvergence
            result in \cite{TwoPointBVPConvergence}. Consider the Greens'
            function \(G(y)\) associated with the operator implied by
            \eqref{eq:cdr-equation-oned} and the point force \(\delta\) centered
            at \(L - \Delta y\). This Greens' function is defined by the
            following equations:
            \begin{align}
                -G_{yy}(y) + b G_y(y) + \tilde{c} G(y)
                &= \delta(y - (L - \Delta y))                                 \\
                G(0) &= 0                                                     \\
                [G(L - \Delta y)] &= 0                                        \\
                [G_y(L - \Delta y)] &= 1                                      \\
                G(L) &= 0
            \end{align}
            i.e., \(G(y)\) has homogeneous boundary conditions, is continuous at
            \(L - \Delta y\), and has a jump in its derivative at \(L - \Delta
            y\). This implies, by standard Greens' function calculations (see,
            e.g., \cite{HabermanPDEs}) that
            \begin{equation}
                \label{eq:greens-function-definition}
                G(y)
                =
                \begin{cases}
                    {
                    G^L(y) =
                    \left(
                    c_{1} \cosh\left(D y\right) +
                    c_{2} \sinh\left(D y\right)
                    \right)
                    } \exp{\left(\frac{1}{2} \, b y\right)}
                    & 0 \leq y \leq L - \Delta y                              \\
                    {
                    G^R(y) =
                    \left(
                    c_{3} \cosh\left(D y\right) +
                    c_{4} \sinh\left(D y\right)
                    \right)
                    } \exp{\left(\frac{1}{2} \, b y\right)}
                    & L - \Delta y < y \leq L.
                \end{cases}
            \end{equation}
            Enforcing the four conditions listed above yields
            \begin{subequations}
                \begin{align}
                    \label{eq:green-c1}
                    c_{1} &= 0                                                \\
                    c_{2} &= -\frac
                    {
                    \cosh\left(A_{2}\right) \sinh\left(D L\right) -
                    \cosh\left(D L\right) \sinh\left(A_{2}\right)
                    }
                    {A_{1} D \sinh\left(D L\right)}                           \\
                    c_{3} &= -\frac {\sinh\left(A_{2}\right)}{A_{1} D}        \\
                    \label{eq:green-c4}
                    c_{4} &= \frac
                    {
                    \cosh\left(D L\right) \sinh\left(A_{2}\right)
                    }{A_{1} D \sinh\left(D L\right)}
                \end{align}
            \end{subequations}
            where
            \begin{subequations}
                \begin{align}
                    \label{eq:green-a1}
                    A_1 &=
                    \exp{\left(
                    \half b (L - \Delta y)
                    \right)}                                                  \\
                    \label{eq:green-a2}
                    A_2 &=  {\left(L - {\Delta y}\right)} D.
                \end{align}
            \end{subequations}
            Define product decompositions \(G^L(y) = G^L_1(y) G^L_2(y)\) and
            \(G^R(y) = G^R_1(y) G^R_2(y)\), where
            \begin{subequations}
                \begin{align}
                    G^L_1(y) &= c_2 \sinh(D y)                                \\
                    G^L_2(y) &= \exp\left(\half b y\right)                    \\
                    G^R_1(y) &= c_3 \cosh(D y) + c_4 \sinh(D y)               \\
                    G^R_2(y) &= \exp\left(\half b y\right).
                \end{align}
            \end{subequations}
            As
            \begin{equation}
                \dfrac{d^n}{dy^n} G^L_1(y) =
                \begin{cases}
                    {
                    c_2 D^{n} \sinh(D y)
                    }
                    & n \text{ is even}                                       \\
                    {
                    c_2 D^{n} \cosh(D y)
                    }
                    & n \text{ is odd}                                        \\
                \end{cases}
            \end{equation}
            the moduli of derivatives of \(G^L_1(y)\) are maximized at \(y = L -
            \Delta y\) by the maximum modulus principle and the bound on the
            imaginary part
            given by the assumption in Equation \eqref{eq:cdr-D-definition}.
            Similarly, by Assumption \eqref{eq:b-sign-assumption}, \(G^L_2(y)\)
            and all of its derivatives are maximized at the same point. Putting
            these together, derivatives of \(G_1^L(y)\) are bounded by
            \begin{subequations}
                \begin{align}
                    \left|\dfrac{d^n}{dy^n} G^L_1(y)\right|_{y = L - \Delta y}
                    &=
                    \left|
                    D^n
                    \dfrac{\exp(2 D L) - (-1)^n\exp(2 D \Delta y)}{\exp(2 D L) - 1}
                    \dfrac{\exp(-2 D \Delta y) - 1}{2 D}
                    \exp\left(\half b\left(\Delta y - L\right)\right)
                    \right|                                                   \\
                    &\leq
                    4 |D|^n
                    \Delta y
                    \exp\left(\half b\left(\Delta y - L\right)\right)         \\
                    &\leq
                    4 |D|^n
                    \Delta y
                \end{align}
            \end{subequations}
            by Equation \eqref{eq:o-of-dy-factor} and Equation
            \eqref{eq:exp-sum-over-exp-bound}. Notate the bound on derivatives
            of \(G^L_2(y)\) and \(G^R_2(y)\) as
            \begin{equation}
                \Gamma(b, L, n) =
                \Gamma =
                \max\left(1, \left(\dfrac{1}{2} b\right)^{n}\right)
                \exp\left(\half b L\right).
            \end{equation}
            Since \(1 \leq |D|\), the Leibniz rule provides a max
            norm estimate for derivatives of \(G^L(y)\):
            \begin{align}
                \left\|
                \dfrac{d^n}{dy^n} G^L(y)
                \right\|_{L^\infty([0, L - \Delta y])}
                &=
                \left\|
                \sum_{i = 0}^{n} \binom{n}{i}
                \left(
                \dfrac{d^{n - i}}{dy^{n - i}}
                G^L_1(y)
                \right)
                \left(
                \dfrac{d^{i}}{dy^{i}}
                G^L_2(y)
                \right)
                \right\|_{L^\infty([0, L - \Delta y])}                        \\
                &\leq
                \label{eq:g-minus-derivative-scaling}
                4 (n + 1)!
                |D|^n
                \Gamma
                \Delta y.
            \end{align}
            Similarly, for \(G^R_1(y)\) and \(L - \Delta y \leq y \leq L\)
            \begin{subequations}
                \begin{align}
                    \left|\dfrac{d^n}{dy^n} G^R_1(y)\right|
                    &=
                    \bigg|
                    D^{n}
                    \dfrac{\exp(2 D L) - \exp(2 D \Delta y)}{\exp(2 D L) - 1}
                    \dfrac{\exp(-(L + \Delta y - y) D) - (-1)^n\exp((L - \Delta y - y) D)}{2 D}\\
                    \nonumber
                    &\phantom{ = \bigg|}
                    \exp\left(\half b (\Delta y - L)\right)
                    \bigg|                                                    \\
                    &\leq
                    4 |D|^{n - 1}.
                \end{align}
            \end{subequations}
            Hence, application of the Leibniz rule provides
            \begin{align}
                \left\|\dfrac{d^n}{dy^n} G^R(y)\right\|_{L^\infty([L - \Delta y,
                L])}
                &\leq
                \left\|
                \sum_{i = 0}^{n} \binom{n}{i}
                \left(
                \dfrac{d^{n - i}}{dy^{n - i}}
                G^R_1(y)
                \right)
                \left(
                \dfrac{d^{i}}{dy^{i}}
                G^R_2(y)
                \right)
                \right\|_{L^\infty([L - \Delta y, L])}                        \\
                \label{eq:g-plus-derivative-scaling}
                &\leq
                4 (n + 1)! |D|^{n - 1}
                \Gamma.
            \end{align}
            Since \(G\) is the Greens' function
            \begin{subequations}
                \begin{align}
                    |u^h(L - \Delta y) - u(L - \Delta y)|^2
                    &= |(\delta(y - (L - \Delta y)), u^h - u)|^2              \\
                    &= |a(G, u^h - u)|^2                                      \\
                    &= |a(G - v^h, u^h - u)|^2
                \end{align}
            \end{subequations}
            for any \(v^h \in V^h\) by Galerkin orthogonality. Let \(v^h = \Pi
            G\) be the finite element interpolation of \(G\). By continuity of
            the sesquilinear form, application of Lemma
            \eqref{lem:interpolation-h1} to every cell and summing the result,
            the scalings in Equation \eqref{eq:g-minus-derivative-scaling} and
            Equation \eqref{eq:g-plus-derivative-scaling}, and C\'{e}a's lemma
            (see, e.g., Theorem 2.4.1 in \cite{CiarletBook})
            \begin{subequations}
                \begin{align}
                    |a(G(y) - v^h, u^h - u)|^2
                    &\leq
                    \BoundednessSymbol^2 \|u^h - u\|_{H^1}^2 \|G(y) - v^h\|_{H^1}^2\\
                    &\leq
                    \BoundednessSymbol^2 \|u^h - u\|_{H^1}^2
                    \Bigg[
                    \left(
                    \sum_{j = 0}^{N - 2}
                    \hat{C}_m
                    \|G^{L,(m + 1)}\|_{L^\infty([j \Delta y, (j + 1)\Delta y])}^2
                    \Delta y^{2 m + 1}
                    \right)
                    +                                                         \\
                    \nonumber
                    &\hphantom{{}\leq{} \BoundednessSymbol^2 \|u^h - u\|^2_{H^1}
                    \bigg\lbrack}
                    \hat{C}_{m + 1}
                    \|G^{R,(m + 2)}\|_{L^\infty([L - \Delta y, L])}^2
                    \Delta y^{2 (m + 1) + 1}
                    \Bigg]                                                    \\
                    &\leq
                    \BoundednessSymbol^2
                    \|u^h - u\|_{H^1}^2
                    \bigg[
                    \max(\hat{C}_m, \hat{C}_{m + 1})
                    \left(
                    4 (m + 3)! |D|^{m + 1}
                    \Gamma
                    \right)^2                                                 \\
                    \nonumber
                    &\hphantom{\leq\gamma \|u^h - u\|_{H^1}^2 \bigg\lbrack}
                    \bigg(
                    \Delta y^2 \left(\sum_{j = 0}^{N - 2} \Delta y^{2 m + 1}\right)
                    + \Delta y^{2(m + 1) + 1}
                    \bigg)\bigg]                                              \\
                    &\leq
                    \dfrac{\BoundednessSymbol^4}{\CoercivitySymbol^2}
                    \left\| \dfrac{d^{m + 1}}{dy^{m + 1}} u\right\|^2_{L^\infty([0, L])}
                    \Delta y^{2 m}
                    \bigg[
                    \max(\hat{C}_m, \hat{C}_{m + 1})
                    \left(4 (m + 3)! |D|^{m + 1} \Gamma\right)^2              \\
                    \nonumber
                    &\hphantom{\leq \BoundednessSymbol
                    \left(
                    \dfrac{\BoundednessSymbol^2}{\CoercivitySymbol^2}
                    \left\|\dfrac{d^{m + 1}}{dy^{m + 1}} u\right\|^2_{L^\infty([0, L])}
                    \Delta y^{2 m}
                    \right)\bigg\lbrack}
                    \left(\Delta y^{2 m + 2} + \Delta y^{2 (m + 1) + 1} \right)
                    \bigg]                                                    \\
                    &\leq
                    \dfrac{\BoundednessSymbol^4 |D|^{2 m + 2} C_1}{\CoercivitySymbol^2}
                    \|u^{(m + 1)}\|_{L^\infty([0, L])}^2
                    \Delta y^{4 m + 2}
                \end{align}
            \end{subequations}
            where
            \begin{align}
                C_1
                &=
                16 \Gamma^2 \max(\hat{C}_m, \hat{C}_{m + 1}) ((m + 3)!)^2     \\
                &=
                \COne
            \end{align}
            is a constant depending on only on \(m\), \(p\), \(n\), \(b\), and
            \(L\). Taking a square root yields the final result
            \begin{equation}
                |u^h(L - \Delta y) - u(L - \Delta y)|
                \leq
                \dfrac{\BoundednessSymbol^2}{\CoercivitySymbol}
                |D|^{m + 1}
                \sqrt{C_1}
                \|u^{(m + 1)}\|_{L^\infty([0, L])}
                \Delta y^{2 m + 1}.
            \end{equation}
        \end{proof}

        \begin{remark}
            An equivalent result also holds for the mesh vertex located at \(y =
            \Delta y\).
        \end{remark}

        \begin{remark}
            Since Theorem \ref{thm:last-interior-vertex-superconvergence} depends
            on global error estimates as well as estimates on the boundary
            cells, performing additional \(p\)-refinement on the boundary cells
            (i.e., \(p > 1\)) does not change the convergence rate. Put
            another way, \(p = 1\) is sufficient to obtain the improved
            convergence rate.
        \end{remark}

        \begin{remark}
            This result holds for any mesh vertex that is a fixed number of
            cells away from a boundary, e.g., for any integer \(j\), the vertex
            at \(L - j \Delta y\) is a factor of \(\Delta y\) more accurate than
            a vertex in the middle of the domain as long as the cells between
            \(L - j \Delta y\) and \(L\) are \(p\)-refined.
        \end{remark}


    \subsection{Rates of convergence for boundary derivatives}
        This subsection presents derivations of boundary derivative
        convergence rates for continuous finite element approximations of the
        one-dimensional model problem. We begin with a brief lemma showing
        that optimality of function values in an \(L^\infty\) norm implies
        optimality (i.e., losing one power of \(\Delta y\) for derivative taken)
        in the \(L^\infty\) norm:

        \begin{lemma}
            \label{lem:l-inf-derivative-optimality}
            Let \(I_1 = [0, \Delta y]\). Suppose that \(p(y)\) is a degree \(m\)
            polynomial approximation to \(u(y)\) on \(I_1\) satisfying the max
            norm estimate
            \begin{equation}
                \|u(y) - p(y)\|_{L^\infty(I_1)}
                \leq
                \LinfC
                \|u(y)\|_{W^{m + 1, \infty}(I_1)}
                \Delta y^{m + 1}.
            \end{equation}
            Then
            \begin{equation}
                \left\|\dfrac{d^n}{dy^n}
                \left(u(y) - p(y)\right)\right\|_{L^\infty(I_1)}
                \leq
                \left[
                \hat{C}(m, n)
                \left(\LinfC + \dfrac{1}{m!} \right)
                +
                \dfrac{1}{(m - n)!}
                \right]
                \left\|u(y)\right\|_{W^{m + 1,\infty}(I_1)}
                \Delta y^{m - n + 1}
            \end{equation}
            where \(\hat{C}(m, n)\) is a constant dependent only on
            \(m\) and \(n\).
        \end{lemma}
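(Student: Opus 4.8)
The plan is to split the error $u - p$ into an analytic part and a polynomial part, estimate each separately, and then control the polynomial part using an inverse (Markov-type) inequality; this is exactly the mechanism that trades one power of $\Delta y$ for each derivative taken. Throughout I assume $0 \le n \le m$ so that all the factorials appearing in the statement are well-defined. First I would introduce $T$, the degree-$m$ Taylor polynomial of $u$ expanded about $y = 0$, and write $u - p = (u - T) + (T - p)$. Taking $n$ derivatives and applying the triangle inequality reduces the problem to separately bounding $\|(u - T)^{(n)}\|_{L^\infty(I_1)}$ and $\|(T - p)^{(n)}\|_{L^\infty(I_1)}$.

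For the analytic part, Taylor's theorem with Lagrange remainder gives, for each $y \in I_1$, the identity $u^{(n)}(y) - T^{(n)}(y) = \frac{u^{(m+1)}(\xi)}{(m - n + 1)!}\, y^{m - n + 1}$ for some $\xi \in I_1$, since $T^{(n)}$ is precisely the degree-$(m-n)$ Taylor polynomial of $u^{(n)}$. Because $|y| \le \Delta y$ on $I_1$ and $(m-n+1)! \ge (m-n)!$, this yields $\|(u - T)^{(n)}\|_{L^\infty(I_1)} \le \frac{1}{(m-n)!}\|u\|_{W^{m+1,\infty}(I_1)}\,\Delta y^{m-n+1}$, which is the second term in the claimed bracket. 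Specializing to $n = 0$ (and bounding $\tfrac{1}{(m+1)!}\le\tfrac{1}{m!}$) also controls the function-value error $\|u - T\|_{L^\infty(I_1)} \le \tfrac{1}{m!}\|u\|_{W^{m+1,\infty}(I_1)}\,\Delta y^{m+1}$, which I will need below.

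The polynomial part is the crux. Since $T - p$ is a polynomial of degree at most $m$ on an interval of length $\Delta y$, the Markov brothers' inequality, after the affine rescaling from a fixed reference interval that produces the factor $\Delta y^{-n}$, supplies a constant $\hat{C}(m,n)$ depending only on $m$ and $n$ with $\|(T - p)^{(n)}\|_{L^\infty(I_1)} \le \hat{C}(m,n)\,\Delta y^{-n}\,\|T - p\|_{L^\infty(I_1)}$. Then the triangle inequality, together with the hypothesis and the $n = 0$ estimate from the previous step, gives $\|T - p\|_{L^\infty(I_1)} \le \|u - T\|_{L^\infty(I_1)} + \|u - p\|_{L^\infty(I_1)} \le \bigl(\LinfC + \tfrac{1}{m!}\bigr)\|u\|_{W^{m+1,\infty}(I_1)}\,\Delta y^{m+1}$. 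Substituting this into the Markov estimate produces the bound $\hat{C}(m,n)\bigl(\LinfC + \tfrac{1}{m!}\bigr)\|u\|_{W^{m+1,\infty}(I_1)}\,\Delta y^{m-n+1}$, which is the first term in the bracket. Adding the two contributions and factoring out $\|u\|_{W^{m+1,\infty}(I_1)}\,\Delta y^{m-n+1}$ yields the stated estimate.

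The one genuine obstacle is obtaining the inverse inequality with the correct power $\Delta y^{-n}$ and a constant that is independent of $\Delta y$: this is where the scaling argument (pulling back to the reference interval, differentiating there, and pushing the bound forward) must be carried out carefully, since each of the $n$ derivatives contributes a factor $2/\Delta y$ under the affine map. Everything else is routine bookkeeping with Taylor remainders and the triangle inequality.
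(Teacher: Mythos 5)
Your proposal is correct and follows essentially the same route as the paper's proof: decompose $u - p = (u - T) + (T - p)$ with $T$ the degree-$m$ Taylor polynomial at $0$, bound the first piece by the Taylor remainder and the second by an inverse estimate combined with the triangle inequality $\|T - p\|_{L^\infty} \le \|u - T\|_{L^\infty} + \|u - p\|_{L^\infty}$. The only cosmetic difference is that you invoke the Lagrange form of the remainder where the paper uses the integral form (which is the safer choice for $u \in W^{m+1,\infty}$, since $u^{(m+1)}$ need only exist almost everywhere); the resulting bounds are identical.
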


        \begin{proof}
            To simplify notation, abbreviate \(L^\infty(I_1)\) and \(W^{m +
            1,\infty}(I_1)\) as \(L^\infty\) and \(W^{m + 1, \infty}\) since
            \(I_1\) is the only relevant interval. Let \(T(y)\) be the Taylor
            series approximation of degree \(m\) to \(u(y)\) centered at \(0\).
            Note that the derivative of the Taylor series is equal to the Taylor
            series of the derivative. Application of the standard Taylor
            series remainder formula on the derivative of \(u(y) - T(y)\)
            yields
            \begin{subequations}
                \begin{align}
                    \left\|\dfrac{d^n}{dy^n} \left(u(y)
                    - T(y)\right)\right\|_{L^\infty}
                    &=
                    \left\|
                    \dfrac{1}{(m - n)!}\int_0^y (y - t)^{(m - n)}
                    u^{(m + 1)}(t) dt
                    \right\|_{L^\infty}                                       \\
                    &\leq
                    \dfrac{1}{(m - n)!}
                    \left\|
                    \int_0^y \Delta y^{(m - n)} dt
                    \right\|_{L^\infty}
                    \left\|u^{(m + 1)}(y)\right\|_{L^\infty}                  \\
                    &\leq
                    \dfrac{1}{(m - n)!}
                    \left\|u(y)\right\|_{W^{m + 1, \infty}}
                    \Delta y^{m - n + 1}.
                \end{align}
            \end{subequations}
            As \(T(y)\) is a degree \(m\) polynomial, applying an inverse
            estimate yields
            \begin{subequations}
                \begin{align}
                    \left\|\dfrac{d^n}{dy^n}
                    \left(T(y) - p(y)\right)\right\|_{L^\infty}
                    &\leq \hat{C}(m, n) \Delta y^{-n}
                    \left\|T(y) - p(y)\right\|_{L^\infty}                     \\
                    &=
                    \hat{C}(m, n) \Delta y^{-n}
                    \left\|
                    \left(
                    u(y) + \int_0^y (y - t)^{m} u^{(m + 1)}(t)dt
                    \right)
                    - p(y)\right\|_{L^\infty}                                 \\
                    &\leq
                    \hat{C}(m, n) \Delta y^{-n}
                    \left(
                    \left\|u(y) - p(y)\right\|_{L^\infty}
                    +
                    \left\|
                    \dfrac{1}{m!}
                    \int_0^y (y - t)^{m} u^{(m + 1)}(t)dt
                    \right\|_{L^\infty}
                    \right)                                                   \\
                    &\leq
                    \hat{C}(m, n) \Delta y^{-n}
                    \left(
                    \LinfC \left\|u^{(m + 1)}(y)\right\|
                    _{L^\infty} \Delta y^{m + 1}
                    +
                    \dfrac{1}{m!}
                    \left\| u^{(m + 1)}(y) \right\|_{L^\infty}
                    \Delta y^{m + 1}
                    \right)                                                   \\
                    &\leq
                    \hat{C}(m, n)
                    \left(\LinfC + \dfrac{1}{m!} \right)
                    \left\|u(y) \right\|_{W^{m + 1, \infty}}
                    \Delta y^{m - n + 1}
                \end{align}
            \end{subequations}
            where \(\hat{C}(m, n)\) comes from an inverse estimate
            (see Section 4.5 of \cite{BrennerScottBook}) that depends on norm
            equivalency in finite dimensional spaces and does not depend on
            \(u(y)\) or \(\Delta y\). Applying the triangle inequality yields
            the stated result.
        \end{proof}

        \begin{theorem}
            \label{thm:cdr-1d-theorem}
            Let \(V^h \subset H^1_0([0, L])\) be the space of continuous
            piecewise polynomials defined over \(N\) cells of uniform width
            \(\Delta y\) that partition \([0, L]\) of degree \(m\) on all
            interior cells and degree \(m + p\) on all boundary cells. Let \(u^h
            \in V^h\) be the solution to Equation \eqref{eq:cdr-complex-problem}
            where \(X = V^h\) and let \(u\) be the solution to Equation
            \eqref{eq:cdr-complex-problem} with \(X = W^{m + p + 1, \infty}([0,
            L])\). Define \(D\) by Equation \eqref{eq:cdr-D-definition}. Assume
            that \(n \leq m + p\). Then there exists a constant \(C_5\)
            dependent on \(m\), \(n\), \(p\), \(b\), and \(L\) (but independent
            of \(u\), \(D\), \(\gamma\), \(\alpha\), and \(\Delta y\)) such that
            \begin{align}
                \left\|
                \dfrac{d^n}{dy^n}\left(u - u^h\right)
                \right\|_{\lastCellLinfy}
                \nonumber
                &\leq
                C_5
                \bigg[
                |D|^{2 m + p + 3} \max\left(2, \dfrac{2}{|D|\Delta y}\right)
                \dfrac{\gamma^4}{\alpha^2}
                \|u^{(m + 1)}\|_{L^\infty([0,L])} \Delta y^{2 m + 1}          \\
                &\phantom{\leq C_5 \bigg\lbrack}
                + |D|^2 \dfrac{\gamma^2}{\alpha}
                \|u(y)\|_{\lastCellWinfx{m + p + 1}}
                \Delta y^{m + p - n + 1}
                \bigg]
            \end{align}
        \end{theorem}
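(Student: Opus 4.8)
The plan is to localise everything to the boundary cell \(I_N = [L - \Delta y, L]\) by splitting the error through an auxiliary local solution. Let \(\tilde u\) solve the same ordinary differential equation \eqref{eq:cdr-equation-oned} on \(I_N\) with the two-point boundary data inherited from the discrete solution, \(\tilde u(L) = u^h(L) = 0\) and \(\tilde u(L - \Delta y) = u^h(L - \Delta y)\), and write
\begin{equation}
    u - u^h = (u - \tilde u) + (\tilde u - u^h).
\end{equation}
The two pieces are bounded separately: \((\tilde u - u^h)\) is a purely local approximation error on a smooth function and will supply the second term of the stated bound, while \((u - \tilde u)\) carries the global superconvergence information through the vertex \(L - \Delta y\) and will supply the first term.

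First I would treat \(\tilde u - u^h\). The key structural observation is that \(u^h\) restricted to \(I_N\) is exactly the local finite element solution of \(\tilde u\)'s problem. The degree \(m + p\) shape functions associated with the \(m + p - 1\) interior nodes of \(I_N\) vanish at both endpoints, so their global support is contained in \(I_N\); testing the global weak form \eqref{eq:cdr-complex-problem} against each such \(\psi\) collapses to the restricted form \(a_{I_N}(u^h, \psi) = \int_{I_N} \tilde f \bar\psi\), while multiplying \(\tilde u\)'s equation by \(\bar\psi\) and integrating by parts on \(I_N\) (boundary terms vanishing since \(\psi\) is zero at both endpoints) gives \(a_{I_N}(\tilde u, \psi) = \int_{I_N} \tilde f \bar\psi\). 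Subtracting yields the local Galerkin orthogonality \(a_{I_N}(\tilde u - u^h, \psi) = 0\) for every \(\psi \in P^{m + p}(I_N) \cap H^1_0(I_N)\), and since \(\tilde u - u^h\) also vanishes at both endpoints by construction, \(u^h|_{I_N}\) is the Galerkin projection of \(\tilde u\) into \(P^{m + p}(I_N)\). Local C\'ea together with the interpolation estimate of Lemma \ref{lem:interpolation-h1} (with \(m\) replaced by \(m + p\)), followed by an inverse estimate converting the \(H^1\) bound into a max-norm bound \(\|\tilde u - u^h\|_{L^\infty(I_N)} \lesssim \tfrac{\gamma^2}{\alpha}\|\tilde u\|_{W^{m + p + 1, \infty}(I_N)}\Delta y^{m + p + 1}\), puts us in the hypothesis of Lemma \ref{lem:l-inf-derivative-optimality} (again with \(m \to m + p\), using \(n \leq m + p\)), which delivers
\begin{equation}
    \left\|\dfrac{d^n}{dy^n}(\tilde u - u^h)\right\|_{L^\infty(I_N)} \lesssim \tfrac{\gamma^2}{\alpha}\|\tilde u\|_{W^{m + p + 1, \infty}(I_N)}\Delta y^{m + p - n + 1}.
\end{equation}
Because \(u - \tilde u\) solves the homogeneous equation (see below), its high derivatives are \(O(\Delta y^{2 m + 1})\), so \(\|\tilde u\|_{W^{m + p + 1, \infty}(I_N)}\) may be replaced by \(\|u\|_{W^{m + p + 1, \infty}(I_N)}\) up to a higher-order remainder; this is the second term.

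Next I would treat \(u - \tilde u\). Since \(u\) and \(\tilde u\) satisfy the same inhomogeneous equation on \(I_N\), their difference \(w\) solves \(-w'' + b w' + \tilde c w = 0\) with \(w(L) = 0\) and \(w(L - \Delta y) = u(L - \Delta y) - u^h(L - \Delta y)\); hence \(w\) is the fixed homogeneous profile built, as in \eqref{eq:greens-function-definition}, from \(\cosh(Dy)\), \(\sinh(Dy)\), and \(\exp(\tfrac12 b y)\) and normalised to be \(1\) at \(L - \Delta y\) and \(0\) at \(L\), scaled by the vertex error. Differentiating this explicit profile \(n\) times produces factors of \(D^n\) together with a single factor \(1/\sinh(D\Delta y)\); bounding the latter with the elementary inequalities \eqref{eq:o-of-dy-factor}--\eqref{eq:exp-sum-over-D-bound} gives the \(\max\bigl(2, 2/(|D|\Delta y)\bigr)\) factor and an overall bound of the shape \(\eDerivativeBound{n}\). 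Inserting the vertex estimate of Theorem \ref{thm:last-interior-vertex-superconvergence} for \(|w(L - \Delta y)|\) then produces the first term, with its \(\Delta y^{2 m + 1}\) scaling and its \(\gamma/\alpha\) and \(|D|\) powers; summing the two pieces by the triangle inequality and absorbing all \(m, n, p, b, L\)-dependent factors into a single \(C_5\) completes the argument.

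The main obstacle, and the step I would be most careful about, is establishing that \(u^h|_{I_N}\) really is the local Galerkin projection of \(\tilde u\): this hinges on the interior degree \(m + p\) test functions on the boundary cell being globally supported only there, so that no coupling to neighbouring cells leaks into the local equations. Everything downstream — the local C\'ea and interpolation bounds for the second term, and the explicit differentiation of the homogeneous profile for the first — is then routine, modulo the bookkeeping required to keep \(C_5\) independent of \(u\), \(D\), \(\gamma\), \(\alpha\), and \(\Delta y\).
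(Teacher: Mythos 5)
Your decomposition is exactly the paper's: the auxiliary two-point problem on \(I_N\) with boundary data \(u^h(L-\Delta y)\) and \(0\), the splitting \(u - u^h = (u - w) + (w - u^h)\), the explicit homogeneous profile for \(u - w\) scaled by the vertex error from Theorem \ref{thm:last-interior-vertex-superconvergence}, and Lemma \ref{lem:l-inf-derivative-optimality} for the local piece. Your justification that \(u^h|_{I_N}\) is the local Galerkin solution (via the interior bubble functions of the boundary cell) is in fact more careful than the paper's one-line assertion that \(w^h = u^h\) on the rightmost cell.

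There is, however, one step that does not work as written: you cannot obtain the max-norm bound \(\|\tilde u - u^h\|_{L^\infty(I_N)} \lesssim \Delta y^{m+p+1}\) from local C\'ea in \(H^1\) followed by an inverse estimate. Lemma \ref{lem:interpolation-h1} (with \(m \to m+p\)) gives an \(H^1(I_N)\) error of order \(\Delta y^{m+p+1/2}\), C\'ea preserves that order, and the inverse estimate \(\|q\|_{L^\infty} \leq C\Delta y^{-1/2}\|q\|_{L^2}\) then yields only \(\Delta y^{m+p}\) — one full power short, which would degrade the second term of the theorem to \(\Delta y^{m+p-n}\). To recover the optimal power you need either an Aubin--Nitsche duality step to first get \(L^2\) optimality, or (as the paper does) to invoke the one-dimensional max-norm optimality result directly: the paper takes the \(L^\infty\) estimate with constant \(\LinfC\) as the hypothesis of Lemma \ref{lem:l-inf-derivative-optimality} and then bounds \(\LinfC \leq C_3 D^2 \gamma^2/\alpha\) by citing \cite{WheelerLinf}; that citation is also where the \(|D|^2\gamma^2/\alpha\) prefactor of the second term comes from, which your route would not produce. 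A smaller bookkeeping point: the derivatives of the homogeneous profile \(e = w - u\) are \(O(|e_1|\max(2, 2/(|D|\Delta y)))\), i.e.\ \(O(\Delta y^{2m})\) rather than \(O(\Delta y^{2m+1})\), because of the \(1/\sinh(D\Delta y)\) factor; the replacement of \(\|w\|_{W^{m+p+1,\infty}(I_N)}\) by \(\|u\|_{W^{m+p+1,\infty}(I_N)}\) still goes through, but the remainder must be absorbed into the \(e_1\) term as the paper does rather than dismissed as higher order outright.
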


        \begin{proof}
            \label{thm:cdr-1d-proof}
            Notate the error at the last interior mesh vertex as
            \begin{equation}
                e_1 = u^h(L - \Delta y) - u(L - \Delta y).
            \end{equation}
            Consider an auxiliary equation (still of the form of Equation
            \eqref{eq:cdr-equation-oned}) defined on the interval \([L - \Delta
            y, L]\):
            \begin{equation}
                \label{eq:cdr-auxiliary-last-cell-bvp}
                \begin{aligned}
                    -w_{yy} + b w_y + \tilde{c} w &= \tilde{f}                \\
                    w(L - \Delta y)               &= u^h(L - \Delta y)        \\
                    w(L)                          &= u_L.
                \end{aligned}
            \end{equation}
            Consider the difference
            \begin{equation}
                e(y) = w(y) - u(y)
            \end{equation}
            between the solution of Equation
            \eqref{eq:cdr-auxiliary-last-cell-bvp} and the solution of the
            original BVP restricted to \([L - \Delta y, L]\). By definition,
            \(e(y)\) satisfies the homogeneous BVP
            \begin{equation}
                \begin{aligned}
                    -e_{yy} + b e_y + \tilde{c} e &= 0                        \\
                    e(L - \Delta y)               &= e_1                      \\
                    e(L)                          &= 0.
                \end{aligned}
            \end{equation}
            Since \(e(y)\) solves a homogeneous second-order
            constant-coefficient boundary value problem it has a simple
            closed-form solution:
            \begin{subequations}
                \begin{align}
                    e(y)
                    &=
                    \dfrac{e_1 \exp\left(\half b(\Delta y - L)\right)}
                    {2}
                    \left[
                    \dfrac
                    {\exp\left(D (L - y) + \half b y\right)}
                    {\sinh(D \Delta y)}
                    -
                    \dfrac
                    {\exp\left(D (y - L) + \half b y\right)}
                    {\sinh(D \Delta y)}
                    \right]                                                   \\
                    &=
                    \dfrac{e_1 \exp\left(\half b(\Delta y - L)\right)}
                    {2}
                    \left[
                    A(y) - B(y)
                    \right].
                \end{align}
            \end{subequations}
            Hence
            \begin{equation}
                \dfrac{d^n}{dy^n} e(y)
                =
                \dfrac{e_1 \exp\left(\half b(\Delta y - L)\right)}
                {2}
                \left[
                \left(\half b - D\right)^n A(y)
                -
                \left(\half b + D\right)^n B(y)
                \right].
            \end{equation}
            As (see Appendix \ref{sec:appendix-exp-sinh-ratio})
            \begin{equation}
                \left|\dfrac{\exp(z)}{\sinh(z)}\right|
                \leq 2 + \dfrac{2}{|z|}
            \end{equation}
            setting \(z = D \Delta y\) yields \todo{double-check the \(z\)
            substitution}
            \begin{equation}
                \label{eq:bound-exp-sinh-by-rational}
                \left|
                \dfrac{\exp(D(y - L))}{\sinh(D \Delta y)}
                \right|
                \leq
                \left|
                \dfrac
                {\exp(D \Delta y)}
                {\sinh(D \Delta y)}
                \right|
                \leq
                2 + \dfrac{2}{|D|\Delta y}.
            \end{equation}
            Since \(e(y)\) is only defined for \(L - \Delta y \leq y \leq L\),
            \(|A(y)|\) is bounded by
            \begin{equation}
                |A(y)|
                \leq
                \left(2 + \dfrac{2}{|D|\Delta y}\right)
                 \exp\left(\half b L\right).
            \end{equation}
            Similarly, as \(|\exp(D (y - L))| \leq |\exp(D (L - y))|\), \(|B(y)|
            \leq |A(y)|\). Hence
            \begin{subequations}
                \begin{align}
                    \left|
                    \dfrac{d^n}{dy^n} e(y)
                    \right|
                    &=
                    \left|
                    \dfrac{e_1 \exp\left(\half b(\Delta y - L)\right)}
                    {2}
                    \left[
                    \left(\half b - D\right)^n A(y)
                    -
                    \left(\half b + D\right)^n B(y)
                    \right]
                    \right|                                                   \\
                    &\leq
                    \left|e_1\right|
                    \exp\left(\half b (\Delta y - L)\right)
                    2
                    \left(\dfrac{1}{2}|b| + |D|\right)^n
                    \left(2 + \dfrac{2}{|D|\Delta y}\right)
                    \exp\left(\half b L\right)                                \\
                    &=
                    \left|e_1\right|
                    \exp\left(\half b \Delta y\right)
                    2
                    \left(\dfrac{1}{2}|b| + |D|\right)^n
                    \left(2 + \dfrac{2}{|D|\Delta y}\right)                   \\
                    &\leq
                    \label{eq:derivative-e-bound}
                    \eDerivativeBound{n}
                \end{align}
            \end{subequations}
            since \(|D| \geq \half |b|\).

            Let \(V^h_1 \subset V^h\) be the space of degree \(m + p\)
            polynomials with support restricted to the rightmost cell (that is,
            the cell with extent \([L - \Delta y, L]\)). Notate this cell as
            \(I_N\). Let \(w^h \in V^h_1\) be the finite element solution to the
            associated weak form of \(\eqref{eq:cdr-auxiliary-last-cell-bvp}\).
            Since \(w^h\) is a finite element discretization whose boundary
            conditions match vertex (and boundary) values of \(u^h\), \(w^h =
            u^h\) on the rightmost cell. Consider the \(L^\infty\) norm of the
            error restricted to the rightmost cell: application of Equation
            \eqref{eq:derivative-e-bound} and Lemma
            \eqref{lem:l-inf-derivative-optimality} yields
            \begin{subequations}
                \begin{align}
                    \left\|\dfrac{d^n}{dy^n}(u - u^h)\right\|_{\lastCellLinfy}
                    &=
                    \left\|
                    \dfrac{d^n}{dy^n}\left(u - w + w - u^h\right)
                    \right\|_{\lastCellLinfy}                                 \\
                    &\leq
                    \left\|\dfrac{d^n}{dy^n}e(y)\right\|_{\lastCellLinfy}
                    +
                    \left\|\dfrac{d^n}{dy^n}(w - w^h)\right\|_{\lastCellLinfy}\\
                    &\leq
                    \eDerivativeBound{n}                                      \\
                    \nonumber
                    &\hphantom{\leq} +
                    \hat{C}(m + p, n)
                    \left(\LinfC + \dfrac{1}{(m + p)!}\right)
                    \left\|w(y)\right\|_{\lastCellWinfx{m + p + 1}}
                    \Delta y^{m + p - n + 1}.
                \end{align}
            \end{subequations}
            The norm of \(w(y)\) may be bounded by using the definition of
            \(e(y)\):
            \begin{align}
                \left\|w(y)\right\|_{\lastCellWinfx{m + p + 1}}
                &\leq
                \left\|u(y)\right\|_{\lastCellWinfx{m + p + 1}}
                +
                \left\|e(y)\right\|_{\lastCellWinfx{m + p + 1}}               \\
                &\leq
                \nonumber
                \left\|u(y)\right\|_{\lastCellWinfx{m + p + 1}}               \\
                &\hphantom{\leq}
                + (m + p + 1) \eDerivativeBound{m + p}.
            \end{align}
            Let
            \begin{equation}
                C_2 = \CTwo.
            \end{equation}
            Combining terms yields
            \begin{subequations}
                \begin{align}
                    \left\|
                    \dfrac{d^n}{dy^n}\left(u - u^h\right)
                    \right\|_{\lastCellLinfy}
                    \nonumber
                    &\leq
                    \bigg(
                    \left[
                    2^{n + 2} |D|^n
                    + C_2 2^{m + p + 2}
                    |D|^{m + p} \Delta y^{m + p - n + 1}
                    \right]                                                   \\
                    &\phantom{\leq\bigg\lbrack}
                    \exp\left(\half |b|\right)
                    \max\left(2, \dfrac{2}{|D|\Delta y}\right)
                    |e_1|
                    \bigg)
                    + C_2 \|u(y)\|_{\lastCellWinfx{m + p + 1}}
                    \Delta y^{m + p - n + 1}                                  \\
                    &\leq
                    \nonumber
                    (1 + C_2) 2^{m + p + 2}
                    |D|^{m + p}
                    \exp\left(\half |b|\right)
                    \max\left(2, \dfrac{2}{|D|\Delta y}\right)
                    |e_1|                                                     \\
                    \label{eq:cdr-in-terms-of-e1}
                    &\phantom{\leq}
                    + C_2 \|u(y)\|_{W^{m + p + 1, \infty}(I_N)}
                    \Delta y^{m + p - n + 1}                                  \\
                    \nonumber
                    &\leq
                    2 C_2
                    \bigg[
                    2^{m + p + 2}
                    |D|^{m + p}
                    \exp\left(\half |b|\right)
                    \max\left(2, \dfrac{2}{|D|\Delta y}\right)
                    |e_1|                                                     \\
                    &\hphantom{\leq C_2 \bigg\lbrack}
                    + \|u(y)\|_{\lastCellWinfx{m + p + 1}}
                    \Delta y^{m + p - n + 1}\bigg].
                \end{align}
            \end{subequations}
            Since \(n \leq m + p\). Substituting in the bound for \(|e_1|\) from
            Theorem \ref{thm:last-interior-vertex-superconvergence} into
            \eqref{eq:cdr-in-terms-of-e1} yields
            \begin{subequations}
                \begin{align}
                    \left\|
                    \dfrac{d^n}{dy^n}\left(u - u^h\right)
                    \right\|_{\lastCellLinfy}
                    \nonumber
                    &\leq
                    2 C_2 \bigg[2^{m + p + 2}
                    |D|^{m + p}
                    \exp\left(\half |b|\right)
                    \max\left(2, \dfrac{2}{|D|\Delta y}\right)                \\
                    \nonumber
                    &\hphantom{\leq 2 C_2 \bigg\lbrack}
                    \left(
                    \dfrac{\gamma^2}{\alpha} |D|^{m + 1} \sqrt{C_1}
                    \|u^{(m + 1)}\|_{L^\infty} \Delta y^{2 m + 1}
                    \right)                                                   \\
                    &\phantom{\leq 2 C_2 \bigg\lbrack}
                    + \|u(y)\|_{\lastCellWinfx{m + p + 1}}
                    \Delta y^{m + p - n + 1}\bigg]                            \\
                    \nonumber
                    &=
                    2 C_2 \bigg[
                    2^{m + p + 2}
                    \sqrt{C_1}
                    \exp\left(\half |b|\right)
                    |D|^{2 m + p + 1}
                    \max\left(2, \dfrac{2}{|D|\Delta y}\right)                \\
                    \label{eq:cdr-linf-error-before-c4}
                    &\hphantom{\leq 2 C_2 \bigg\lbrack}
                    \dfrac{\gamma^2}{\alpha}
                    \|u^{(m + 1)}\|_{L^\infty([0,L])} \Delta y^{2 m + 1}
                    + \|u(y)\|_{\lastCellWinfx{m + p + 1}}
                    \Delta y^{m + p - n + 1}\bigg].
                \end{align}
            \end{subequations}
            \cite{WheelerLinf} provides the bound
            \begin{equation}
                \LinfC
                \leq
                C_3(m, [0, L]) D^2 \dfrac{\gamma^2}{\alpha}
            \end{equation}
            where \(C_3\) is dependent only on the polynomial degree \(m\) and
            the domain. Hence
            \begin{subequations}
                \begin{align}
                    C_2
                    &= \CTwo                                                  \\
                    &\leq (1 + m + p) (1 + \hat{C}(m + p, n))
                    \left(1 + \LinfC\right)                                   \\
                    &\leq (1 + m + p) (1 + \hat{C}(m + p, n))
                    \left(1 + C_3(m, [0, L]) D^2 \dfrac{\gamma^2}{\alpha} \right).
                \end{align}
            \end{subequations}
            Let
            \begin{equation}
                C_4(m, p, n, b) = \CFour.
            \end{equation}
            Combining Equation \eqref{eq:cdr-linf-error-before-c4}, the
            definition of \(C_4\), and the lower bound of \(1\) on \(1/\alpha,
            \gamma\), and \(D\) yields
            \begin{subequations}
                \begin{align}
                    \left\|
                    \dfrac{d^n}{dy^n}\left(u - u^h\right)
                    \right\|_{\lastCellLinfy}
                    \nonumber
                    &\leq
                    C_4
                    \bigg[
                    \left(1 + C_3(m, [0, L]) D^2 \dfrac{\gamma^2}{\alpha}\right)
                    |D|^{2 m + p + 1}                                         \\
                    \nonumber
                    &\phantom{\leq C_4 \bigg\lbrack}
                    \max\left(2, \dfrac{2}{|D|\Delta y}\right)
                    \dfrac{\gamma^2}{\alpha}
                    \|u^{(m + 1)}\|_{L^\infty([0,L])} \Delta y^{2 m + 1}      \\
                    &\phantom{\leq C_4 \bigg\lbrack}
                    +
                    \left(
                    1 + C_3(m, [0, L]) D^2 \dfrac{\gamma^2}{\alpha}
                    \right)
                    \|u(y)\|_{\lastCellWinfx{m + p + 1}}
                    \Delta y^{m + p - n + 1}
                    \bigg]                                                    \\
                    \nonumber
                    &\leq
                    C_4 (1 + C_3(m, [0, L]))
                    \bigg[
                    |D|^{2 m + p + 3}
                    \dfrac{\gamma^4}{\alpha^2}
                    \max\left(2, \dfrac{2}{|D|\Delta y}\right)
                    \|u^{(m + 1)}\|_{L^\infty([0,L])} \Delta y^{2 m + 1}      \\
                    &\phantom{\leq C_4 (1 + C_3(m, [0, L])) \bigg\lbrack}
                    +
                    |D|^2
                    \dfrac{\gamma^2}{\alpha}
                    \|u(y)\|_{\lastCellWinfx{m + p + 1}}
                    \Delta y^{m + p - n + 1}
                    \bigg].
                \end{align}
            \end{subequations}
            Let
            \begin{equation}
                C_5 = C_5(b, m, p, n, L) = C_4 (1 + C_3(m, [0, L]))
            \end{equation}
            so \(C_5\) is dependent on \(b\), \(L\), \(m\), \(p\), and \(n\) but
            independent of \(u\) and \(D\). Hence
            \begin{align}
                \left\|
                \dfrac{d^n}{dy^n}\left(u - u^h\right)
                \right\|_{\lastCellLinfy}
                \nonumber
                &\leq
                C_5
                \bigg[
                |D|^{2 m + p + 3} \max\left(2, \dfrac{2}{|D|\Delta y}\right)
                \dfrac{\gamma^4}{\alpha^2}
                \|u^{(m + 1)}\|_{L^\infty([0,L])} \Delta y^{2 m + 1}          \\
                &\phantom{\leq C_5 \bigg\lbrack}
                + |D|^2 \dfrac{\gamma^2}{\alpha}
                \|u(y)\|_{\lastCellWinfx{m + p + 1}}
                \Delta y^{m + p - n + 1}
                \bigg]
            \end{align}
            which is the stated result.
        \end{proof}

        \begin{remark}
            An equivalent result also holds for the leftmost cell, i.e., the
            cell with extent \([0, \Delta y]\).
        \end{remark}

        \begin{remark}
            \label{remark:physical-values}
            If the constants \(b, \tilde{c}, D\) are all \(O(1)\) then the
            derivative error bound may be conveniently written in terms of a
            single scaling constant \(C\) that varies continuously with \(b\)
            and \(\tilde{c}\) (and depends on \(m\), \(p\), and \(n\)) as
            \begin{equation}
                \left\|\dfrac{d^n}{dy^n}(u - u^h)\right\|_{\lastCellLinfy}
                \leq
                C
                \left(
                \|u^{(m + 1)}(y)\|_{L^\infty([0,L])}
                \Delta y^{2 m}
                +
                \|u(y)\|_{\lastCellWinfx{m + p + 1}} \Delta y^{m + p - n + 1}
                \right).
            \end{equation}
        \end{remark}

        \begin{remark}
            \label{remark:two-asymptotic-rates-1}
            \(e(y)\) represents the contribution to the error in the last cell
            due to coupling the finite element approximation defined on the last
            cell to the rest of the computational domain. Therefore there are
            really two convergence regimes for this problem: one regime where
            the coupling error dominates and another where the local error
            dominates.
        \end{remark}

\section{Extensions to higher dimensions}
    \label{sec:extension-to-2d}
    \subsection{Overview}
        \label{subsec:overview-2d}
        Theorem \ref{thm:cdr-1d-theorem} can be generalized to higher dimensions
        when both the finite element space and mesh have a tensor product
        structure. Theorem \ref{thm:cdr-2d} does this by using the periodic
        structure in \(x\) to decouple the discretization into a sum of
        one-dimensional problems (in \(y\)) with Dirichlet boundary conditions.
        Unlike the one-dimensional analysis performed in Section
        \eqref{sec:oned-analysis}, we only consider the case where \(c \in
        \mathbb{R}\) and \(c > 0\). Since part of our analysis involves
        complex-valued test and trial functions we consider the complex weak
        problem. Note that, since the discrete test functions are real-valued,
        we still use the standard real-valued mass and stiffness matrices in the
        analysis below. The 2D model problem is as follows: Let \(V^x\) be the
        subspace of \(H^1([0, 1])\) of complex-valued functions periodic over
        \([0, 1]\). Let \(V^y = H_0^1([0, L])\). Consider the sesquilinear form
        \begin{equation}
            \label{eq:cdr-2d-sesquilinear}
            a_2(\phi, \psi)
            =
            \int_0^L\int_0^1
            \nabla \phi \cdot \nabla \bar{\psi}
            +
            \vec{b} \cdot \nabla \phi \bar{\psi}
            +
            c \phi \bar{\psi} dx dy
        \end{equation}
        and skew linear form
        \begin{equation}
            \label{eq:cdr-2d-skew}
            l_2(\psi)
            =
            \int_0^L\int_0^1
            f \bar{\psi} dx dy.
        \end{equation}
        The weak problem is, for a Hilbert space \(X \subseteq V^x \otimes V^y\),
        finding \(z \in X\) such that, for all \(\psi \in X\)
        \begin{equation}
            \label{eq:cdr-2d-weak-problem}
            a_2(z, \psi) = l_2(\psi).
        \end{equation}

        The decomposition used below (i.e., decomposition by using the tensor
        product structure) is similar to the approach used in
        \cite{DouglasLinfTensor}. Consider the tensor product finite element
        space
        \begin{equation}
            \label{eq:enriched-fe-space}
            W^h = V^{\Delta x} \otimes V^{\Delta y}
        \end{equation}
        where \(V^{\Delta x} \subset V^x\) is the space of periodic, piecewise
        linear functions defined on a uniform partition of \([0, 1]\) with
        cell width \(\Delta x\) and \(V^{\Delta y} \subset V^y\) is the space of
        piecewise polynomials defined on a uniform partition of \([0, L]\) with
        cell width \(\Delta y\), where all boundary cells (i.e., cells adjacent
        to the \(y = 0\) or \(y = L\) boundaries) have degree \(1 + p\)
        polynomials and all other cells have degree \(1\) polynomials. Note
        that \(W^h \subset V^x \otimes V^y\). As in \cite{DouglasLinfTensor},
        let
        \begin{equation}
            \label{eq:tensor-vertices}
            (\delta_i, \delta_j)
        \end{equation}
        be the coordinates of cell corners (i.e., the mesh vertices in 2D).

    \subsection{Decoupling of the 2D Problem}
        This subsection presents results showing that, with periodic boundary
        conditions and a tensor product discretization, the discrete 2D problem
        is equal to a sum of uncoupled 1D problems. Part of this analysis
        involves using test functions that are piecewise linear interpolants of
        Fourier modes. For the Hilbert space \(V^{\Delta x}\) defined in
        Equation \eqref{eq:enriched-fe-space}, define the \(k\)th Fourier mode
        interpolant \(F_k(x) \in V^{\Delta x}\) as
        \begin{equation}
            \label{eq:fourier-interpolant-def}
            F_k(x) = \Pi \exp(2 \pi I k x)
        \end{equation}
        where \(\Pi\) is the nodal piecewise linear interpolation operator onto
        \(V^{\Delta x}\) (i.e., \(F_k(x_i) = \exp(2 \pi I k x_i)\) when \(x_i\)
        is a mesh vertex). Note that \((F_{k}, F_{k'})_{L^2} = 0\) when \(k \neq
        k'\). In addition, \(F_k\) satisfies an orthogonality property with
        \(\exp(2 \pi I k' x)\):

        \begin{lemma}
            \label{lem:fourier-interpolant-orthogonality}
            Let \(V^{\Delta x}\) be the Hilbert space defined by Equation
            \eqref{eq:enriched-fe-space} and \(F_k(x)\) be the interpolant of
            \(\exp(2 \pi I k x)\) defined by Equation
            \eqref{eq:fourier-interpolant-def}. Then \(F_k(x)\) is orthogonal to
            the Fourier mode \(\exp(2 \pi I k' x)\) except when \(k - k'\) is a
            nonzero multiple of \(N\). Furthermore, if \(k = k' + j N\) (i.e.,
            \(k - k'\) is a nonzero multiple of \(N\)) then
            \begin{equation}
                \int_0^1
                F_k(x) \exp(-2 \pi I k' x) dx
                =
                \dfrac{\sin^2(\pi k' \Delta x)}{\pi^2 {k'}^2 \Delta x^2}.
            \end{equation}
        \end{lemma}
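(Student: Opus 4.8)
The plan is to expand the interpolant in the nodal ``hat'' basis and reduce the integral to a single reference computation multiplied by a discrete orthogonality sum. Write the mesh vertices as \(x_i = i \Delta x\) for \(i = 0, 1, \ldots, N - 1\) with \(\Delta x = 1/N\), and let \(\phi_i \in V^{\Delta x}\) denote the periodic piecewise linear hat function determined by \(\phi_i(x_j) = \delta_{ij}\). Since \(\Pi\) is nodal interpolation,
\begin{equation}
    F_k(x) = \sum_{i = 0}^{N - 1} \exp(2 \pi I k x_i) \phi_i(x),
\end{equation}
so that
\begin{equation}
    \int_0^1 F_k(x) \exp(-2 \pi I k' x) \, dx
    = \sum_{i = 0}^{N - 1} \exp(2 \pi I k x_i) \int_0^1 \phi_i(x) \exp(-2 \pi I k' x) \, dx.
\end{equation}
The key observation is that every \(\phi_i\) is a periodic translate of the reference hat function \(\phi_0\), i.e.\ \(\phi_i(x) = \phi_0(x - x_i)\), so the substitution \(y = x - x_i\) together with \(1\)-periodicity of the integrand gives
\begin{equation}
    \int_0^1 \phi_i(x) \exp(-2 \pi I k' x) \, dx = \exp(-2 \pi I k' x_i) \, \hat{\phi}_0(k'),
    \qquad
    \hat{\phi}_0(k') = \int_0^1 \phi_0(y) \exp(-2 \pi I k' y) \, dy.
\end{equation}

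Next I would extract the discrete orthogonality. Substituting the previous identity and using \(x_i = i/N\) yields
\begin{equation}
    \int_0^1 F_k(x) \exp(-2 \pi I k' x) \, dx
    = \hat{\phi}_0(k') \sum_{i = 0}^{N - 1} \exp\!\left(2 \pi I (k - k') \frac{i}{N}\right).
\end{equation}
The geometric sum over the \(N\)th roots of unity equals \(N\) when \(k - k'\) is an integer multiple of \(N\) and vanishes otherwise. This immediately establishes the orthogonality claim: whenever \(k - k'\) is not a multiple of \(N\) the integral is zero, while in the exceptional case \(k = k' + jN\) it collapses to \(N \hat{\phi}_0(k')\).

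Finally I would evaluate the single reference coefficient \(\hat{\phi}_0(k')\) explicitly. Because \(\phi_0\) is even and supported on \([-\Delta x, \Delta x]\), the imaginary part of the integrand integrates to zero and
\begin{equation}
    \hat{\phi}_0(k') = 2 \int_0^{\Delta x} \left(1 - \frac{y}{\Delta x}\right) \cos(2 \pi k' y) \, dy
    = \frac{1 - \cos(2 \pi k' \Delta x)}{2 \pi^2 {k'}^2 \Delta x},
\end{equation}
where the elementary integral is carried out by parts. Applying \(1 - \cos\theta = 2 \sin^2(\theta/2)\) with \(\theta = 2 \pi k' \Delta x\) and using \(N = 1/\Delta x\) then gives
\begin{equation}
    N \hat{\phi}_0(k') = \frac{\sin^2(\pi k' \Delta x)}{\pi^2 {k'}^2 \Delta x^2},
\end{equation}
which is the stated formula.

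The computation is essentially elementary, so the only real obstacle is bookkeeping. First, one must treat the reference hat function's periodic wraparound correctly: its support straddles \(0\) and \(1 - \Delta x\) on \([0,1]\), and it is precisely the \(1\)-periodicity of \(\exp(-2\pi I k' y)\) that lets me integrate \(\phi_0\) over \([-\Delta x, \Delta x]\) and keeps the translation identity valid for every \(i\). Second, one must keep careful track of the factor \(N\) coming from the root-of-unity sum against the \(\Delta x = 1/N\) buried inside \(\hat{\phi}_0(k')\), so that the two combine into the clean \(\Delta x^2\) denominator. I would also remark that the same calculation shows the nonaliased diagonal term (\(k = k'\)) is nonzero, consistent with the preceding observation that the \(F_k\) are mutually orthogonal only for distinct frequencies.
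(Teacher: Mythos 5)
Your proof is correct, and it reaches the stated formula by a genuinely different route than the paper. The paper integrates by parts once, replacing \(F_k\) by its piecewise-constant derivative \(F_k'\) (whose value on cell \(i\) carries the factor \(\exp(2\pi I k\, i\Delta x)\bigl(\exp(2\pi I k \Delta x)-1\bigr)/\Delta x\)), then integrates \(\exp(-2\pi I k' x)\) cell by cell; the two factors of the form \(\exp(\pm 2\pi I \cdot \Delta x)-1\) combine into \(4\sin^2(\pi k' \Delta x)\) at the end. You instead expand \(F_k\) in the nodal hat basis, exploit translation invariance to factor the integral into \(\hat{\phi}_0(k')\) times the roots-of-unity sum, and evaluate the Fourier coefficient of the reference hat function directly. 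Both arguments hinge on the same discrete orthogonality sum \(\sum_{i=0}^{N-1}\exp(2\pi I(k-k')i\Delta x)\), and your bookkeeping of the factor \(N\) against the \(\Delta x\) in \(\hat{\phi}_0(k')\) checks out, as does the value \(\hat{\phi}_0(k') = \sin^2(\pi k'\Delta x)/(\pi^2 k'^2 \Delta x)\). Your version makes the aliasing structure more transparent (it is the standard ``Fourier transform of the B-spline'' computation), at the cost of having to handle the periodic wraparound of \(\phi_0\)'s support, which you do correctly. Your closing remark that the diagonal term \(k = k'\) is also nonzero is a worthwhile observation: the lemma's phrase ``except when \(k-k'\) is a nonzero multiple of \(N\)'' literally asserts orthogonality at \(k=k'\), which is false, and the paper's own line ``the summation is nonzero only if \(k-k' = jN \neq 0\)'' has the same slip; the intended reading is that \(k'=k\) and the aliased frequencies \(k' = k - jN\) are the exceptional cases. (Both your formula and the paper's implicitly assume \(k'\neq 0\), since \(k'\) appears in a denominator.)
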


        \begin{proof}
            Integration by parts yields
            \begin{subequations}
                \begin{align}
                    \int_0^1 F_k(x) \exp(-2 \pi I k' x) dx
                    &=
                    \sum_{i = 0}^{N - 1}
                    \dfrac{1}{2 \pi I k'}
                    \int_{i \Delta x}^{(i + 1)\Delta x}
                    F_k'(x) \exp(-2\pi I k' x) dx                             \\
                    &=
                    \dfrac{1}{2 \pi I k'}
                    \sum_{i = 0}^{N - 1}
                    \int_{i \Delta x}^{(i + 1)\Delta x}
                    \left(
                    \exp(2 \pi I k i \Delta x)
                    \dfrac{\exp(2 \pi I k \Delta x) - 1}{\Delta x}
                    \right)
                    \exp(-2\pi I k' x) dx                                     \\
                    &=
                    \dfrac{\exp(2 \pi I k \Delta x) - 1}{2 \pi I k' \Delta x}
                    \sum_{i = 0}^{N - 1}
                    \int_{i \Delta x}^{(i + 1)\Delta x}
                    \left(
                    \exp(2 \pi I k i \Delta x)
                    \right)
                    \exp(-2\pi I k' x) dx                                     \\
                    &=
                    \dfrac{\exp(2 \pi I k \Delta x) - 1}{2 \pi I k' \Delta x}
                    \left(
                    \dfrac{I (\exp(-2 \pi I k' \Delta x) - 1)}
                    {2 \pi k'}
                    \right)
                    \sum_{i = 0}^{N - 1}
                    \exp(2 \pi I (k - k') i \Delta x)                         \\
                    &=
                    \dfrac{
                    \left(
                    \exp(2 \pi I k \Delta x) - 1
                    \right)
                    \left(
                    \exp(-2 \pi I k' \Delta x) - 1
                    \right)
                    }
                    {4 \pi^2 {k'}^2 \Delta x}
                    \sum_{i = 0}^{N - 1}
                    \exp(2 \pi I (k - k') i \Delta x)
                \end{align}
            \end{subequations}
            The summation is nonzero only if \(k - k' = j N \neq 0\) for some
            integer \(j\). Hence \(F_k(x)\) and \(\exp(2 \pi I k' x)\) are
            orthogonal except when \(k - k'\) is a nonzero multiple of \(N\).
            Finally, suppose that \(k = k' + j N\). then
            \begin{subequations}
                \begin{align}
                    \dfrac
                    {
                    \left(
                    \exp(2 \pi I (k' + j N) \Delta x) - 1
                    \right)
                    \left(
                    \exp(-2 \pi I k' \Delta x) - 1
                    \right)
                    }
                    {4 \pi^2 {k'}^2 \Delta x}
                    \sum_{i = 0}^{N - 1}
                    \exp(2 \pi I (j N) i \Delta x)
                    &=
                    \dfrac
                    {
                    4 \sin^2(\pi k' \Delta x)
                    }
                    {4 \pi^2 {k'}^2 \Delta x}
                    \left(N\right)                                            \\
                    &=
                    \dfrac
                    {
                    \sin^2(\pi k' \Delta x)
                    }
                    {
                    \pi^2 {k'}^2 \Delta x^2
                    }
                \end{align}
            \end{subequations}
        \end{proof}

        We now present the primary result of this section.
        \begin{theorem}
            \label{thm:cdr-2d-decoupling}
            Let \(W^h = V^{\Delta x} \otimes V^{\Delta y}\) be the tensor
            product finite element space defined by
            \eqref{eq:enriched-fe-space}. Suppose that \(u^h \in W^h\) is a
            solution to \eqref{eq:cdr-2d-weak-problem} with \(X = W^h\). Then
            \begin{equation}
                \label{eq:tensor-product-fourier}
                u^h(x, y) = \sum_{k = -N/2 - 1}^{N/2} F_k(x) \hat{u}^h_k(y)
            \end{equation}
            where the \(F_k(x)\) trial functions are mutually orthogonal under
            both the standard one-dimensional \(L^2\) inner product for complex
            functions as well as the sesquilinear form given by Equation
            \eqref{eq:cdr-complex-sesquilinear}, where
            \begin{align}
                (F_j, F_k)_{L^2}
                =
                \int_0^1
                F_j \bar{F}_{k}
                dx
                &= \delta_{jk} \dfrac{\lambda_{M, k}}{\Delta x}               \\
                \vec{b} &= (b_0, b_1)                                         \\
                a(F_j, F_k)
                =
                \int_0^1
                F_{j, x} \bar{F}_{k, x}
                + b_0 F_{j, x} \bar{F}_k
                + c F_j \bar{F}_k
                dx
                &= \delta_{jk} \dfrac{\lambda_{A, k}}{\Delta x}
            \end{align}
            where \(\delta_{jk}\) is the Kronecker delta, \(a(\cdot, \cdot)\)
            is the sesquilinear form defined by Equation
            \eqref{eq:cdr-complex-sesquilinear}, \(\lambda_M\) and \(\lambda_A\)
            are scalars, and each \(\hat{u}^h_k(y)\) satisfies the
            one-dimensional boundary value problem
            \begin{equation}
                \int_0^L
                \hat{u}^h_{k, y} \bar{\phi}_y
                + b_1 \hat{u}^h_{k, y} \bar{\phi}
                + \dfrac{\lambda_{A, k}}{\lambda_{M, k}}
                \hat{u}^h_k \bar{\phi} dy
                =
                \int_0^L
                \int_0^1
                \dfrac{\Delta x}{\lambda_{M, k}}
                f(x, y) F_k(x)
                \bar{\phi} dx dy,
                \forall \phi(y) \in V^{\Delta y}.
            \end{equation}
        \end{theorem}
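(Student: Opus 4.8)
The plan is to exploit the translation invariance of the uniform, periodic discretization in \(x\): the nodal Gram matrices of the \(x\)-forms are circulant, so the Fourier interpolants \(F_k\) are their common eigenvectors, which simultaneously diagonalizes every \(x\)-contribution in \(a_2\) and forces the decoupling once the tensor representation is substituted into the weak problem. Concretely, I would first let \(\{\phi_i\}\) be the nodal hat-function basis of \(V^{\Delta x}\); since the mesh is uniform and periodic, these basis functions are translates of one another, so the matrices \(M_{il} = (\phi_i, \phi_l)_{L^2}\), \(A_{il} = \int_0^1 \phi_{i,x}\bar\phi_{l,x}\,dx\), and \(B_{il} = \int_0^1 \phi_{i,x}\bar\phi_l\,dx\) each depend only on \(i - l \bmod N\) and are therefore circulant. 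The nodal-value vector of \(F_k\) is \(\left(\exp(2\pi I k i \Delta x)\right)_i\), which is exactly the \(k\)th discrete Fourier eigenvector common to all circulant matrices; hence \((F_j, F_k)_{L^2}\) and \(a(F_j, F_k)\) (the latter being \(A + b_0 B + c M\) evaluated on \(F_j, F_k\)) are both diagonal in \((j,k)\). The \(L^2\) case with \(j=k\) is the computation in Lemma \ref{lem:fourier-interpolant-orthogonality}, and I would simply \emph{define} \(\lambda_{M,k}/\Delta x\) and \(\lambda_{A,k}/\Delta x\) to be the resulting diagonal entries.

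With the diagonalization in hand, the decoupling is a separation of variables. Because \(W^h = V^{\Delta x}\otimes V^{\Delta y}\) and \(\{F_k\}\) is a basis of \(V^{\Delta x}\), every \(u^h\in W^h\) has the representation \eqref{eq:tensor-product-fourier} with coefficient functions \(\hat u^h_k \in V^{\Delta y}\). I would then test \eqref{eq:cdr-2d-weak-problem} against \(\psi = F_k(x)\phi(y)\) for arbitrary \(\phi\in V^{\Delta y}\), noting that these products span \(W^h\). Expanding \(\nabla u^h\cdot\nabla\bar\psi + \vec b\cdot\nabla u^h\,\bar\psi + c\,u^h\bar\psi\) and using the product form of \(u^h\) and \(\psi\), every term factors as an \(x\)-integral times a \(y\)-integral: the \(x\)-diffusion, the \(b_0\) \(x\)-convection, and the reaction terms assemble into \(a(F_j,F_k)\), while the \(y\)-diffusion, the \(b_1\) \(y\)-convection, and the mass term assemble into \((F_j,F_k)_{L^2}\) times the \(y\)-form \(\int_0^L \hat u^h_{j,y}\bar\phi_y + b_1\hat u^h_{j,y}\bar\phi\,dy\). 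The two orthogonality relations then collapse the sum over \(j\) to its single \(j=k\) term, and dividing through by the common factor \(\lambda_{M,k}/\Delta x\) yields exactly the stated one-dimensional boundary value problem, with reaction coefficient \(\lambda_{A,k}/\lambda_{M,k}\), advection \(b_1\), and forcing carrying the weight \(\Delta x/\lambda_{M,k}\).

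The main obstacle is the orthogonality of the \(F_k\) under the \emph{full} sesquilinear form rather than merely under the \(L^2\) inner product: the nonsymmetric convection contribution \(b_0\int_0^1 F_{j,x}\bar F_k\,dx\) is what could in principle couple distinct modes, and its vanishing for \(j\neq k\) is not transparent without the circulant viewpoint. I expect the remaining care to lie in the bookkeeping of complex conjugates, so that the Hermitian relation \(\sum_i \exp(2\pi I (j-k) i\Delta x) = N\delta_{jk}\) is applied to the correct (conjugated) argument of \(a(\cdot,\cdot)\), and in verifying that only aliasing-equivalent indices yield the same interpolant, so that the \(N\) distinct \(F_k\) genuinely form a basis. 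Once these points are settled, the separation of variables and the collapse of the double sum are routine.
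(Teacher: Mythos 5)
Your proposal is correct and follows essentially the same route as the paper: both arguments rest on the observation that the mass and stiffness matrices of the periodic, uniform \(x\)-discretization are circulant, so the Fourier interpolants \(F_k\) are their common eigenvectors, and then use the tensor-product structure and this simultaneous diagonalization to collapse the double sum in the weak form to the stated family of uncoupled one-dimensional problems. The only cosmetic difference is that the paper computes \(\lambda_{M,k}\) and \(\lambda_{A,k}\) explicitly from the circulant eigenvalue formula, whereas you define them as the resulting diagonal entries.
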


        \begin{proof}
            Let \(A\) and \(M\) be the stiffness and mass matrices,
            respectively, coming from discretizations of \(a(\cdot, \cdot)\) and
            \((\cdot, \cdot)_{L^2}\) with standard piecewise linear hat
            functions. Since \(V^{\Delta x}\) consists of piecewise linear
            functions and the boundary conditions in the \(x\) direction are
            periodic, both \(A\) and \(M\) are circulant matrices and therefore
            share the same set of mutually orthogonal eigenvectors, which are
            the mesh vertex values of the Fourier interpolants \(F_k(x)\)
            defined by Equation \eqref{eq:fourier-interpolant-def}. Since the
            \(i\)th row of \(A\) is
            \begin{equation}
                A_i =
                \begin{pmatrix}
                    0, & 0, & \cdots, &
                    \dfrac{-1}{\Delta x} - \dfrac{b_0}{2} + \dfrac{c \Delta x}{6}, &
                    \dfrac{2}{\Delta x}  + \dfrac{4 c \Delta x}{6}, &
                    \dfrac{-1}{\Delta x} + \dfrac{b_0}{2} + \dfrac{c \Delta x}{6},
                    \cdots, & 0, & 0
                \end{pmatrix}
            \end{equation}
            and, similarly, the \(i\)th row of \(M\) is
            \begin{equation}
                M_i =
                \begin{pmatrix}
                    0, & 0, & \cdots, &
                     \dfrac{\Delta x}{6}, &
                    \dfrac{4 \Delta x}{6}, &
                    \dfrac{\Delta x}{6}, &
                    \cdots, & 0, & 0
                \end{pmatrix}
            \end{equation}
            The classic formula for the eigenvalues of a circulant matrices
            provides the \(k\)th eigenvalues of \(M\)
            \begin{equation}
                \lambda_{M, k} =
                \dfrac{\Delta x (2 \cos(2 \pi \Delta x k) + 4)}{6}
            \end{equation}
            and \(A\)
            \begin{equation}
                \lambda_{A, k} =
                \dfrac
                {2 c {\Delta x}^{2}
                + 3 I b_{0} {\Delta x} \sin\left(2 \pi {\Delta x} k\right)
                + {\left(c {\Delta x}^{2} - 6\right)} \cos\left(2 \pi {\Delta x} k\right)
                + 6
                }
                {3 {\Delta x}}.
            \end{equation}
            Note that \(\lambda_{M,k} > 0\). Since \(A\) and \(M\) have complete
            sets of eigenvectors, the set \(\left\{F_k(x)\right\}\) is a basis
            for \(V^{\Delta x}\). Therefore, expressing \(u^h\) in this new
            basis (instead of the usual hat functions) yields
            \begin{equation}
                u^h = \sum_{j,m} c_{jm} F_m(x) Y_j(y),
                F_m(x) \in V^{\Delta x},
                Y_j(y) \in V^{\Delta y}.
            \end{equation}
            By the same argument, consider a test function \(\varphi = F_k(x)
            Y_l(y) \in W^h\). Plugging \(u^h\) and \(\varphi\) into the finite
            element problem given by Equation \eqref{eq:cdr-2d-weak-problem}
            with \(X = W^h\) yields
            \begin{subequations}
                \begin{align}
                    \int_0^L \int_0^1 u^h_x \bar{\varphi}_x + b_0 u^h_x \bar{\varphi}
                    + c u^h \bar{\varphi} dx dy
                    +
                    \int_0^L \int_0^1 u^h_y (\bar{\varphi}_y + b_1 \bar{\varphi}) dx dy
                    &=
                    \int_0^L \int_0^1 f \bar{\varphi} dx dy                   \\
                    \sum_{j, m} c_{jm}
                    \int_0^L
                    a(F_m, F_k)
                    Y_j(y) \bar{Y}_l(y)
                    +
                    (F_m, F_k)_{L^2}
                    Y_{j,y}(y) (\bar{Y}_{l,y}(y) + b_1 \bar{Y}_l(y)) dy
                    &= \int_0^L \int_0^1 f \bar{\varphi} dx dy                \\
                    \sum_j c_{jk}
                    \int_0^L \dfrac{\lambda_{A, k}}{\Delta x} Y_j(y) \bar{Y}_l(y)
                    +
                    \dfrac{\lambda_{M, k}}{\Delta x}
                    Y_{j,y}(y) (\bar{Y}_{l,y}(y) + b_1 \bar{Y}_l(y)) dy
                    &= \int_0^L \int_0^1 f \bar{\varphi} dx dy
                \end{align}
            \end{subequations}
            multiplying both sides by \(\Delta x/\lambda_{M, k}\) yields
            \begin{equation}
                \sum_j c_{jk}
                \int_0^L Y_{j,y}(y) (\bar{Y}_{l,y}(y) + b_1 \bar{Y}_l(y))
                +
                \dfrac{\lambda_{A, k}}{\lambda_{M, k}}
                Y_j(y) \bar{Y}_l(y)
                dy
                = \int_0^L \int_0^1
                \dfrac{\Delta x}{\lambda_{M, k}}
                f \bar{F}_m(x) \bar{Y}_l(y) dx dy.
            \end{equation}
            Hence, defining
            \begin{equation}
                \hat{u}^h_k(y) = \sum_j c_{jk} Y_j(y)
            \end{equation}
            yields the one-dimensional convection-diffusion-reaction problem
            \begin{equation}
                \label{eq:oned-decoupled-problem}
                \int_0^L
                \hat{u}^h_{k,y}(y) \bar{Y}_{l,y}(y)
                + b_1 \hat{u}^h_{k,y}(y) \bar{Y}_l(y)
                + \dfrac{\lambda_{A,k}}{\lambda_{M,k}}
                \hat{u}^h_k(y) \bar{Y}_l(y) dy =
                \int_0^L \hat{f}_k^h(y) \bar{Y}_l(y) dy
            \end{equation}
            for all \(Y_l(y) \in V^{\Delta y}\), where
            \begin{equation}
                \hat{f}_k^h(y)
                =
                \int_0^1
                \dfrac{\Delta x}{\lambda_{M, k}}
                f(x, y) \bar{F}_k(x)
                dx
            \end{equation}
            is the \(L^2\) projection of the \(x\)-component of \(f\) onto
            \(F_k(x)\). Therefore, by construction
            \begin{equation}
                u^h(x, y) = \sum_{k} F_k(x) \hat{u}^h_k(y)
            \end{equation}
            where each \(\hat{u}^h_k(y)\) satisfies Equation
            \eqref{eq:oned-decoupled-problem}.
        \end{proof}

        \begin{theorem}
            The resulting one-dimensional finite element problem implied by
            Equation \eqref{eq:oned-decoupled-problem} is well-posed.
        \end{theorem}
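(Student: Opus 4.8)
The plan is to recognize Equation \eqref{eq:oned-decoupled-problem} as a concrete instance of the abstract one-dimensional weak problem \eqref{eq:cdr-complex-problem} and then to invoke Theorem \ref{thm:complex-oned-wellposedness}. Matching coefficients, the decoupled problem has advection coefficient \(b = b_1\), reaction coefficient \(\tilde{c} = \lambda_{A,k}/\lambda_{M,k}\), forcing \(\hat{f}_k^h\), and Hilbert space \(X = V^{\Delta y} \subset H_0^1([0, L])\) (a finite-dimensional, hence closed, subspace). Since Theorem \ref{thm:complex-oned-wellposedness} applies to any \(X \subseteq H_0^1([0, L])\) and requires only the hypothesis \(\real{\tilde{c}} > 0\), the entire proof reduces to verifying that the effective reaction coefficient \(\lambda_{A,k}/\lambda_{M,k}\) has positive real part for every mode index \(k\).

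To that end, I would first record that \(\lambda_{M,k}\) is real and strictly positive: from its closed form \(\lambda_{M,k} = \Delta x(2\cos(2\pi\Delta x k) + 4)/6\) the factor \(2\cos(2\pi\Delta x k) + 4 \geq 2\), so \(\lambda_{M,k} > 0\), as already noted in the proof of Theorem \ref{thm:cdr-2d-decoupling}. Because \(\lambda_{M,k}\) is a positive real scalar, \(\real{\lambda_{A,k}/\lambda_{M,k}} = \real{\lambda_{A,k}}/\lambda_{M,k}\), so it suffices to show \(\real{\lambda_{A,k}} > 0\). In the 2D setting \(b_0\) and \(c\) are real, so the only imaginary contribution to the numerator of \(\lambda_{A,k}\) is the term carrying the factor \(I\), and
\[
\real{\lambda_{A,k}} = \frac{2c\Delta x^2 + (c\Delta x^2 - 6)\cos(2\pi\Delta x k) + 6}{3\Delta x}.
\]

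The one substantive step is to check that this quantity is positive for all \(k\). I would observe that, writing \(t = \cos(2\pi\Delta x k) \in [-1, 1]\), the numerator is an affine (hence monotone) function of \(t\) and therefore attains its extrema at the endpoints \(t = \pm 1\). Evaluating gives \(3c\Delta x^2\) at \(t = 1\) and \(c\Delta x^2 + 12\) at \(t = -1\), both strictly positive since \(c > 0\) and \(\Delta x > 0\). Affineness then forces the numerator to remain positive on the whole interval \([-1, 1]\), so \(\real{\lambda_{A,k}} > 0\) and hence \(\real{\tilde{c}} > 0\). Applying Theorem \ref{thm:complex-oned-wellposedness} with \(X = V^{\Delta y}\) then yields the claimed well-posedness. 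I do not expect any genuine obstacle beyond this endpoint check; the content lies entirely in the fortunate fact that the \(Q^1\) mass- and stiffness-matrix eigenvalues conspire to keep the effective reaction coefficient in the right half-plane for every Fourier mode, so that the scalar coercivity hypothesis of the one-dimensional theory is inherited by each decoupled mode.
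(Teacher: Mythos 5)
Your proposal is correct and follows essentially the same route as the paper: reduce to checking that the effective reaction coefficient \(\lambda_{A,k}/\lambda_{M,k}\) has positive real part and then invoke Theorem \ref{thm:complex-oned-wellposedness}. The only difference is cosmetic --- you verify positivity of the numerator by evaluating the affine function of \(\cos(2\pi\Delta x k)\) at its endpoints, while the paper rearranges the same numerator to obtain the sharper uniform bound \(\real{\lambda_k} \geq c\) (which it reuses later in Theorem \ref{thm:cdr-2d}).
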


        \begin{proof}
            The eigenvalue ratio is
            \begin{equation}
                \lambda_k = \dfrac{\lambda_{A,k}}{\lambda_{M,k}}
                =
                \dfrac
                {
                2 c {\Delta x}^{2}
                + 3 I b_{0} {\Delta x} \sin\left(2 \pi {\Delta x} k\right)
                + {\left(c {\Delta x}^{2} - 6\right)}
                \cos\left(2 \pi {\Delta x} k\right)
                + 6
                }
                {
                {\Delta x}^{2}
                {\left(\cos\left(2 \pi {\Delta x} k\right) + 2\right)}
                }.
            \end{equation}
            Notate the real part of the eigenvalue ratio as
            \begin{equation}
                \real{\lambda_k}
                =
                \dfrac
                {
                2 c \Delta x^2 + (c \Delta x^2 - 6) \cos(2 \pi \Delta x k) + 6
                }
                {
                \Delta x^2 (\cos(2 \pi \Delta x k) + 2)
                }.
            \end{equation}
            Note that
            \begin{subequations}
                \begin{align}
                    \real{\lambda_k}
                    &\geq
                    \dfrac
                    {
                    2 c \Delta x^2 + (c \Delta x^2 - 6) \cos(2 \pi \Delta x k) + 6
                    }
                    {
                    3 \Delta x^2
                    }                                                         \\
                    &=
                    \dfrac
                    {
                    (2 + \cos(2 \pi \Delta x k)) c \Delta x^2
                    + 6 (1 - \cos(2 \pi \Delta x k))
                    }
                    {
                    3 \Delta x^2
                    }                                                         \\
                    \label{eq:cdr-eigenvalue-ratio-lower-bound}
                    &\geq
                    c
                \end{align}
            \end{subequations}
            and
            \begin{subequations}
                \begin{align}
                    \real{\lambda_k}
                    &\leq
                    \dfrac
                    {
                    2 c \Delta x^2 + (c \Delta x^2 - 6) \cos(2 \pi \Delta x k) + 6
                    }
                    {
                    \Delta x^2
                    }                                                         \\
                    &=
                    \dfrac
                    {
                    (2 + \cos(2 \pi \Delta x k)) c \Delta x^2
                    + 6 (1 - \cos(2 \pi \Delta x k))
                    }
                    {
                    \Delta x^2
                    }                                                         \\
                    &\leq
                    \label{eq:cdr-eigenvalue-ratio-upper-bound}
                    \dfrac
                    {
                    6 + 3 c \Delta x^2
                    }
                    {
                    \Delta x^2
                    }.
                \end{align}
            \end{subequations}
            Hence, for any \(\Delta x > 0\) \(c < \real{\lambda_k} < \infty\),
            so by Theorem \ref{thm:complex-oned-wellposedness} the resulting
            one-dimensional problem is well-posed.
        \end{proof}

        \begin{remark}
            An unusual feature of this well-posedness argument is the presence
            of the continuity constant that scales like \(O(1/\Delta x^2)\):
            this is due to the presence of two \(x\)-derivatives in the low
            order term of the \(y\) discretization.
        \end{remark}

    \subsection{Boundary derivative convergence of the 2D problem}
        \begin{theorem}
            \label{thm:cdr-2d}
            Consider the discretization of Equation \eqref{eq:cdr-equation}
            described in Equations
            \eqref{eq:cdr-2d-sesquilinear}-\eqref{eq:enriched-fe-space}. Let
            \(C_6\) and \(C_8\) be constants dependent on the coefficients of
            \eqref{eq:cdr-equation}, the domain, the forcing function (in
            particular, its regularity), and the number of derivatives \(n\).
            Assume that \(f\) is at least \(18 + p\) \todo{double check that
            this is the precise number} times differentiable in the \(y\)
            direction. This discretization recovers the 1D estimate given by
            Theorem \ref{thm:cdr-1d-theorem} at isolated points along the
            nonperiodic boundary with an additional error term coming from the
            \(x\)-discretization:
            \begin{equation}
                \label{eq:cdr-2d-convergence-recovery}
                \left|
                \dfrac{d^n}{dy^n}
                \left(u(x, y) - u^h(x, y) \right)
                \bigg|_{(\delta_i, \delta_{N^*})}
                \right|
                \leq
                C_6 \Delta x^2
                +
                C_8 (\Delta y^2 + \Delta y^{2 + p - n})
            \end{equation}
            where \(N^* = 0\) or \(N^* = N\).
        \end{theorem}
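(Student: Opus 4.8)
The plan is to leverage the decoupling established in Theorem \ref{thm:cdr-2d-decoupling}, which reduces the fully discrete two-dimensional solution to a finite sum of one-dimensional mode solutions, and then to apply the one-dimensional estimate of Theorem \ref{thm:cdr-1d-theorem} mode-by-mode. The first step is to expand the exact solution in a Fourier series in \(x\), \(u(x, y) = \sum_{k \in \mathbb{Z}} \hat{u}_k(y) \exp(2 \pi I k x)\), and to write the discrete solution in the interpolant basis, \(u^h(x, y) = \sum_{|k| \leq N/2} F_k(x) \hat{u}^h_k(y)\), as in Theorem \ref{thm:cdr-2d-decoupling}. The key simplification is that the estimate is required only at a mesh vertex \((\delta_i, \delta_{N^*})\): since \(F_k\) interpolates \(\exp(2 \pi I k x)\) exactly at the grid points, \(F_k(\delta_i) = \exp(2 \pi I k \delta_i)\), so evaluating the \(y\)-derivative of the error at \(x = \delta_i\) collapses the \(x\)-dependence and leaves a sum over Fourier modes of the one-dimensional errors \(\frac{d^n}{dy^n}(\hat{u}_k - \hat{u}^h_k)(y)\), plus the contribution of the unresolved modes \(|k| > N/2\).

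Next I would control each piece by inserting intermediate one-dimensional problems. For each resolved mode, \(\hat{u}_k\) solves a continuous problem of the form \eqref{eq:cdr-equation-oned} with reaction coefficient \(\tilde{c}_k = c + (2\pi k)^2 + 2\pi I b_0 k\) and forcing \(\hat{f}_k\), whereas \(\hat{u}^h_k\) is the finite element solution of the decoupled problem \eqref{eq:oned-decoupled-problem}, whose reaction coefficient is the eigenvalue ratio \(\lambda_{A,k}/\lambda_{M,k}\) and whose forcing is the projected \(\hat{f}^h_k\). I would therefore introduce \(\bar{u}_k\), the exact one-dimensional solution carrying the discrete reaction coefficient \(\lambda_{A,k}/\lambda_{M,k}\) and the discrete forcing \(\hat{f}^h_k\), and split \(\hat{u}_k - \hat{u}^h_k = (\hat{u}_k - \bar{u}_k) + (\bar{u}_k - \hat{u}^h_k)\). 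The second difference is purely a one-dimensional finite element error to which Theorem \ref{thm:cdr-1d-theorem} applies directly with \(m = 1\); after using the \(\max(2, 2/(|D|\Delta y))\) factor to trade one power of \(\Delta y\), its two terms produce exactly the \(\Delta y^2\) and \(\Delta y^{2 + p - n}\) contributions. The first difference, \(\hat{u}_k - \bar{u}_k\), is a consistency error driven by the \(O(\Delta x^2)\) discrepancy between \(\tilde{c}_k\) and \(\lambda_{A,k}/\lambda_{M,k}\) (verified from the explicit symbols using the eigenvalue-ratio bounds \eqref{eq:cdr-eigenvalue-ratio-lower-bound}--\eqref{eq:cdr-eigenvalue-ratio-upper-bound}) together with the \(O(\Delta x^2)\) forcing-projection error implicit in the interpolation symbol of Lemma \ref{lem:fourier-interpolant-orthogonality}; a stability estimate for the coercive one-dimensional problem then bounds it by \(\Delta x^2\) times a mode-dependent factor. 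The unresolved tail \(\sum_{|k| > N/2}\) is bounded using the decay of the Fourier coefficients of the smooth exact solution, which again yields an \(O(\Delta x^2)\) bound.

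Collecting terms, the \(x\)-consistency and tail contributions assemble into \(C_6 \Delta x^2\) while the one-dimensional finite element errors assemble into \(C_8(\Delta y^2 + \Delta y^{2 + p - n})\). The main obstacle is that these assemblies are sums over modes in which the per-mode constants grow: the one-dimensional bound of Theorem \ref{thm:cdr-1d-theorem} carries powers of \(|D_k|\) as well as of the mode-dependent boundedness and coercivity constants \(\gamma_k, \alpha_k\), and since \(\lambda_{A,k}/\lambda_{M,k} \sim (2\pi k)^2\) one has \(|D_k| \sim 2\pi|k|\) and \(\gamma_k \sim k^2\), so the summands grow polynomially in \(k\). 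Convergence of the mode sum to a finite constant independent of \(\Delta x\) and \(\Delta y\) therefore requires the coefficients \(\hat{u}_k(y)\) and their relevant \(y\)-derivatives -- equivalently, sufficiently many derivatives of \(f\) -- to decay fast enough to overcome this growth. This is precisely the role of the differentiability hypothesis on \(f\), and tracking exactly how many derivatives are consumed (both to bound, via the governing ODE, the mode norms \(\|\hat{u}_k^{(m+1)}\|\) and \(\|\hat{u}_k\|_{W^{m+p+1,\infty}}\) appearing in Theorem \ref{thm:cdr-1d-theorem} and to guarantee summability against the \(|D_k|\) and \(\gamma_k\) factors) is the delicate bookkeeping that fixes the constant \(18 + p\) and determines the final dependence of \(C_6\) and \(C_8\) on \(f\).
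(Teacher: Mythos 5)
Your proposal is correct and follows essentially the same route as the paper: your intermediate mode solution \(\bar{u}_k\) is precisely the paper's semidiscretization \(\hat{v}_k\), the splitting \((\hat{u}_k - \bar{u}_k) + (\bar{u}_k - \hat{u}^h_k)\) matches the paper's two-term decomposition at the mesh vertex (exploiting \(F_k(\delta_i) = \exp(2\pi I k \delta_i)\)), and the two pieces are handled exactly as in the paper — elliptic regularity against the \(O(\Delta x^2)\) reaction-coefficient and forcing-projection discrepancies for the first, Theorem \ref{thm:cdr-1d-theorem} with \(m = 1\) and mode-dependent constants for the second. You also correctly identify that the regularity hypothesis on \(f\) is what makes the mode sums converge against the polynomial growth of \(|D_k|\), \(\gamma_k\), and \(\alpha_k^{-1}\).
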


        \begin{proof}
            Since \(u(x, y)\) and \(f(x, y)\) are periodic in the \(x\)
            direction and smooth they are equal to their Fourier series:
            \begin{align}
                u(x, y)
                &=
                \sum_{k = -\infty}^\infty
                \exp(2 \pi I k x) \hat{u}_k(y)                                \\
                f(x, y)
                &=
                \sum_{k = -\infty}^\infty
                \exp(2 \pi I k x) \hat{f}_k(y).
            \end{align}
            Hence each \(\hat{u}_k\) solves the BVP
            \begin{equation}
                -\hat{u}_{k,yy}
                +
                b_1 \hat{u}_{k,y}
                +
                (4 \pi^2 k^2 + 2 \pi I b_0 k + c) \hat{u}_k
                = \hat{f}_k
            \end{equation}
            corresponding to the weak problem
            \begin{equation}
                \label{eq:fourier-transform-1d-problem}
                \int_0^L \hat{u}_{k,y} \bar{\phi}_y
                + b_1 \hat{u}_{k,y} \bar{\phi}
                + (4 \pi^2 k^2 + 2 \pi I b_0 k + c) \hat{u}_k \bar{\phi}
                dy
                =
                \int_0^L
                \hat{f}_k \bar{\phi} dy.
            \end{equation}
            By Equation \eqref{eq:tensor-product-fourier}, \(u^h\) may be
            written as
            \begin{equation}
                u^h(x, y) =
                \sum_{k = -\infty}^\infty
                F_k(x) \hat{u}^h_k(y)
            \end{equation}
            where, for \(|k| > 1/(2 \Delta x)\), \(F_k(x) = \hat{u}^h_k(y) =
            0\). Hence, assume that \(|k| \leq 1/(2 \Delta x)\).
            \todo{Expand on this assumption.} The difference between the
            eigenvalue ratio and the low order coefficient in Equation
            \eqref{eq:fourier-transform-1d-problem} is equal to, by a
            Taylor series expansion in \(\Delta x\),
            \begin{subequations}
                \begin{align}
                    \dfrac{\lambda_{A,k}}{\lambda_{M,k}}
                    &=
                    4 \pi^{2} k^{2} + 2 I \pi b_{0} k + c
                    + \frac{4}{3} \pi^{4} k^{4} {\Delta x}^{2}
                    + \frac{1}{45} \,
                    {\left(8 \, \pi^{6} k^{6} - 8 I \pi^{5} b_{0} k^{5}\right)}
                    {\Delta x}^{4}
                    + \cdots                                                  \\
                    &=
                    4 \pi^{2} k^{2} + 2 I \pi b_{0} k + c
                    + R.
                \end{align}
            \end{subequations}
            Since \(|k| \leq 1/(2 \Delta x)\), converting all terms in the
            Taylor series of \(R\) to constant multiples of \(k^4 \Delta x^2\)
            or \(k^4 \Delta x^3\) yields
            \begin{equation}
                \label{eq:R-bound-from-Taylor}
                |R| \leq C_R(b_0) k^4 \Delta x^2
            \end{equation}
            where \(C_R(b_0)\) is a constant dependent on \(b_0\). Let
            \(\hat{v}_k(y) \in V^y\) be the solution to the semidiscretization
            in \(x\) of Equation \eqref{eq:cdr-2d-sesquilinear}, i.e., the
            solution to the weak problem
            \begin{equation}
                \label{eq:cdr-2d-semidiscretization}
                \int_0^L \hat{v}_{k,y} \bar{\phi}_y
                + b_1 \hat{v}_{k,y} \bar{\phi}
                + (4 \pi^2 k^2 + 2 \pi I b_0 k + c + R) \hat{v}_k \bar{\phi}
                dy
                =
                \int_0^L
                \hat{f}^h_k(y) \bar{\phi} dy, \forall \phi \in V^y
            \end{equation}
            where \(\hat{v}_k(y) = 0\) for \(|k| > 1/(2 \Delta x)\). The rest of
            the proof follows from a triangle inequality argument involving
            \(\hat{u}_k\), \(\hat{v}_k\), and \(\hat{u}^h_k\). Consider the
            decomposition
            \begin{subequations}
                \begin{align}
                    u(\delta_i, y) - u^h(\delta_i, y)
                    &=
                    \sum_{k = -\infty}^\infty
                    \left(
                    \exp(2 \pi I k \delta_i) \hat{u}_k(y)
                    -
                    F_k(\delta_i) \hat{u}^h_k(y)
                    \right)                                                   \\
                    &=
                    \sum_{k = -\infty}^\infty
                    \left(
                    \exp(2 \pi I k \delta_i) \hat{u}_k(y)
                    - F_k(\delta_i) \hat{v}_k(y)
                    + F_k(\delta_i) \hat{v}_k(y)
                    -
                    F_k(\delta_i) \hat{u}^h_k(y)
                    \right)                                                   \\
                    \nonumber
                    &=
                    \sum_{k = -\infty}^\infty
                    \bigg(
                    \exp(2 \pi I k \delta_i)
                    \left(
                    \hat{u}_k(y)
                    - \hat{v}_k(y)
                    \right)
                    +
                    (\exp(2 \pi I k \delta_i) - F_k(\delta_i))
                    \hat{v}_k(y)                                              \\
                    &\hphantom{= \sum_{k = -\infty}^\infty \bigg\lbrack}
                    +
                    F_k(\delta_i)
                    \left(
                    \hat{v}_k(y) - \hat{u}^h_k(y)
                    \right)
                    \bigg)                                                    \\
                    &=
                    \sum_{k = -\infty}^\infty
                    \left(
                    \underbrace{
                    \exp(2 \pi I k \delta_i)
                    \left(
                    \hat{u}_k(y)
                    - \hat{v}_k(y)
                    \right)}_{=(1)}
                    +
                    \underbrace{F_k(\delta_i)
                    \left(
                    \hat{v}_k(y) - \hat{u}^h_k(y)
                    \right)}_{=(2)}
                    \right)
                \end{align}
            \end{subequations}
            since \(\exp(2 \pi I k \delta_i) = F_k(\delta_i)\) by the definition
            of \(F_k(x)\) in Equation \eqref{eq:fourier-interpolant-def}.

            \paragraph{Bounding \texorpdfstring{\((1)\)}{1}}
            Let \(\hat{e}_k = \hat{v}_k - \hat{u}_k\). Then \(\hat{e}_k\)
            satisfies the weak boundary value problem
            \begin{equation}
                \int_0^L \hat{e}_{k,y} \bar{\phi}_y
                + b_1 \hat{e}_{k,y} \phi
                + (4 \pi^2 k^2 + 2 \pi I b_0 k + c) \hat{e}_k \bar{\phi}
                dy
                =
                \int_0^L
                \left(
                \left(
                \hat{f}^h_k
                - \hat{f}_k
                \right)
                - R \hat{v}_k
                \right)
                \bar{\phi} dy.
            \end{equation}
            \(\hat{e}_k\) can be bounded by standard Sobolev estimates since the
            right-hand side (a combination of the error in the approximate
            Fourier mode and error in the low order term) is relatively small
            (\(O(\Delta x^2)\) for small \(k\)). Since \(\hat{e}_k\) solves an
            elliptic problem with smooth data, a regularity estimate yields
            (see \cite{EvansBook}, section 6.3.2) \todo{double check the scaling
            of the constant w.r.t. the data; in that chapter we only see
            \(\|c\|_{L^\infty}\) but this is not explicitly shown in the
            particular theorem I cite for controlling the \(H^{r + 2}\) norm}
            \begin{equation}
                \label{eq:evans-regularity-estimate}
                \|\hat{e}_k\|_{H^{r + 2}}
                \leq
                C([0, L], r)
                \left(
                1
                + |b_1|
                + \left|4 \pi^2 k^2 + 2 \pi I b_0 k + c\right|
                \right)
                \left\|
                \left(
                \left(
                \hat{f}^h_k
                - \hat{f}_k
                \right)
                - R \hat{v}_k
                \right)
                \right\|_{H^r}.
            \end{equation}
            Due to the homogeneous boundary conditions, the fundamental
            theorem of calculus implies that
            \begin{equation}
                \label{eq:linf-derivative-estimate}
                \left\|\dfrac{d^n}{dy^n} \hat{e}_k\right\|_{L^\infty}
                \leq
                L
                \left\|\dfrac{d^n}{dy^n} \hat{e}_k\right\|_{H^1}
                \leq
                L \left\|\hat{e}_k\right\|_{H^{n + 1}}.
            \end{equation}
            Combining Equations
            \eqref{eq:evans-regularity-estimate}%
            -\eqref{eq:linf-derivative-estimate}
            yields, for \(1 \leq n\),
            \begin{subequations}
                \begin{align}
                    \left\|\dfrac{d^n}{dy^n} \hat{e}_k\right\|_{L^\infty}
                    &\leq
                    C([0, L], n - 1)
                    \left(
                    1
                    + |b_1|
                    + \left|4 \pi^2 k^2 + 2 \pi I b_0 k + c\right|
                    \right)
                    \left\|
                    \left(
                    \hat{f}^h_k
                    - \hat{f}_k
                    \right)
                    - R \hat{v}_k
                    \right\|_{H^{n - 1}}                                      \\
                    \label{eq:e-k-original-linf-estimate}
                    &\leq
                    C([0, L], n - 1)
                    \left(
                    1
                    + |b_1|
                    + \left|4 \pi^2 k^2 + 2 \pi I b_0 k + c\right|
                    \right)
                    \left(
                    \left\|
                    \hat{f}^h_k
                    - \hat{f}_k
                    \right\|_{H^{n - 1}}
                    + R
                    \left\|
                    \hat{v}_k
                    \right\|_{H^{n - 1}}
                    \right).
                \end{align}
            \end{subequations}
            Lemma \ref{lem:fourier-interpolant-orthogonality} bounds the
            error in approximating \(\hat{f}_k\):
            \begin{subequations}
                \begin{align}
                    \hat{f}^h_k(y) - \hat{f}_k(y)
                    &=
                    \int_0^1
                    \dfrac{\Delta x}{\lambda_{M,k}}
                    f(x, y) \bar{F}_k(x) dx
                    -
                    \hat{f}_k(y)                                              \\
                    &=
                    \int_0^1
                    \dfrac{\Delta x}{\lambda_{M,k}}
                    \left(
                    \sum_{j = -\infty}^\infty
                    \hat{f}_{k + j N}(y)
                    \exp(2 \pi I (k + j N) x)
                    \right)
                    \bar{F}_k(x) dx
                    -
                    \hat{f}_k(y)                                              \\
                    \nonumber
                    &=
                    \left(
                    \int_0^1
                    \dfrac{\Delta x}{\lambda_{M,k}}
                    \exp(2 \pi I k x)
                    \bar{F}_k(x)
                    - 1 dx
                    \right) \hat{f}_k(y)                                      \\
                    &\hphantom{=} +
                    \dfrac{\Delta x}{\lambda_{M,k}}
                    \sum_{\substack{j = -\infty \\ j \neq 0}}^\infty
                    \hat{f}_{k + j N}(y)
                    \int_0^1
                    \exp(2 \pi I (k + j N) x) \bar{F}_k(x) dx                 \\
                    \label{eq:f-k-difference-with-summation}
                    &=
                    \left(
                    \dfrac{\sin^2(\pi \Delta x k)}{\pi^2 \Delta x^2 k^2}
                    \dfrac{\Delta x}{\lambda_{M,k}}
                    - 1
                    \right)
                    \hat{f}_k(y)
                    +
                    \dfrac{\Delta x}{\lambda_{M,k}}
                    \sum_{\substack{j = -\infty \\ j \neq 0}}^\infty
                    \hat{f}_{k + j N}(y)
                    \dfrac{4 N^2 \sin^2(\pi k/N)}{\pi^2 (k + j N)^2}.
                \end{align}
            \end{subequations}
            Due to the regularity assumption on \(f\), there exists a constant
            \(C(f)\) independent of \(x\), \(y\), and \(k\) such that
            \begin{align}
                \label{eq:f-k-decay}
                \left|
                \dfrac{d^n}{dy^n}
                \hat{f}_k(y)
                \right|
                &\leq C(f) k^{-q + n}                                         \\
                \label{eq:f-h-k-decay}
                \left|
                \dfrac{d^n}{dy^n}
                \hat{f}^h_k(y)
                \right|
                &\leq C(f) k^{-q + n}.
            \end{align}
            Note that
            \begin{equation}
                \label{eq:dx-eigenvalue-ratio-bound}
                \dfrac{\Delta x}{\lambda_{M,k}}
                =
                \dfrac{6}{2 \cos(2 \pi k \Delta x) + 4}
                \Rightarrow
                \left|
                \dfrac{\Delta x}{\lambda_{M,k}}
                \right|
                \leq 3.
            \end{equation}
            Substituting Equations \eqref{eq:f-k-decay} and
            \eqref{eq:dx-eigenvalue-ratio-bound} into the \(n\)th derivative of
            Equation \eqref{eq:f-k-difference-with-summation} yields
            \begin{subequations}
                \begin{align}
                    \left|
                    \dfrac{d^n}{dy^n}
                    \left(
                    \hat{f}^h_k(y) - \hat{f}_k(y)
                    \right)
                    \right|
                    &\leq
                    C(f)
                    \left(
                    \left|
                    \dfrac{\sin^2(\pi \Delta x k)}{\pi^2 \Delta x^2 k^2}
                    \dfrac{\Delta x}{\lambda_{M,k}}
                    - 1
                    \right|
                    k^{-q + n}
                    +
                    \dfrac{12 N^2 \sin^2(\pi k/N)}{\pi^2}
                    \sum_{\substack{j = -\infty \\ j \neq 0}}^\infty
                    \dfrac{1}{(N j + k)^{2 + q - n}}
                    \right)                                                   \\
                    &\leq
                    C(f)
                    \left(
                    \left|
                    \dfrac{\sin^2(\pi \Delta x k)}{\pi^2 \Delta x^2 k^2}
                    \dfrac{\Delta x}{\lambda_{M,k}}
                    - 1
                    \right|
                    k^{-q + n}
                    +
                    \dfrac{12 N^2}{\pi^2}
                    \sum_{\substack{j = -\infty \\ j \neq 0}}^\infty
                    \dfrac{1}{(N j + k)^{2 + q - n}}
                    \right)                                                   \\
                    \nonumber
                    &=
                    C(f)
                    \Bigg(
                    \left|
                    \dfrac{\sin^2(\pi \Delta x k)}{\pi^2 \Delta x^2 k^2}
                    \dfrac{\Delta x}{\lambda_{M,k}}
                    - 1
                    \right|
                    k^{-q + n}
                    +
                    \dfrac{12 N^2}{\pi^2}
                    \left(N^{-2 - q + n}\right)                               \\
                    \label{eq:f-k-difference-with-zeta}
                    &\phantom{= C(f)\bigg\lbrack}
                    \left[
                    \zeta_H(2 + q - n, 1 + k N^{-1})
                    +
                    \zeta_H(2 + q - n, 1 - k N^{-1})
                    \right]
                    \Bigg)
                \end{align}
            \end{subequations}
            where \(\zeta_H\) is the Hurwitz zeta function. Rearranging the
            first term in Equation \eqref{eq:f-k-difference-with-zeta} and
            performing a Taylor series expansion of the first term around \(k
            \Delta x = 0\) provides the bound
            \begin{subequations}
                \begin{align}
                    \left|
                    \dfrac{\sin^2(\pi \Delta x k)}{\pi^2 k^2 \Delta x^2}
                    \dfrac{6}{2\cos(2 \pi k \Delta x) + 4}
                    - 1
                    \right|
                    &=
                    \left|
                    \dfrac{6}{2\cos(2 \pi k \Delta x) + 4}
                    \right|
                    \left|
                    \dfrac{\sin^2(\pi \Delta x k)}{\pi^2 k^2 \Delta x^2}
                    -
                    \dfrac{2\cos(2 \pi k \Delta x) + 4}{6}
                    \right|                                                   \\
                    &\leq
                    3
                    \left|
                    \dfrac{\sin^2(\pi \Delta x k)}{\pi^2 k^2 \Delta x^2}
                    -
                    \dfrac{2\cos(2 \pi k \Delta x) + 4}{6}
                    \right|                                                   \\
                    &=
                    3
                    \left|
                    \sin^2(\pi \Delta x k)
                    -
                    \dfrac{2\cos(2 \pi k \Delta x) + 4}{6}
                    \pi^2 k^2 \Delta x^2
                    \right|
                    \dfrac{1}{\pi^2 k^2 \Delta x^2}                           \\
                    &\leq
                    3
                    \left|
                    \dfrac{1}{3}(k \pi \Delta x)^4
                    +
                    \left(
                    \dfrac{928}{3} \pi^8 (k \Delta x)^2
                    + 192 \pi^6
                    \right)
                    (k \Delta x)^6
                    \right|
                    \dfrac{1}{\pi^2 k^2 \Delta x^2}                           \\
                    &=
                    3
                    \left|
                    \dfrac{1}{3}(k \pi \Delta x)^2
                    +
                    \left(
                    \dfrac{928}{3} \pi^8 (k \Delta x)^2
                    + 192 \pi^6
                    \right)
                    (k \Delta x)^4
                    \right|                                                   \\
                    &\leq C_T k^2 \Delta x^2
                    \label{eq:f-remainder-taylor-part}
                \end{align}
            \end{subequations}
            since \(|k| \leq 1/(2 \Delta x)\), where \(C_T\) is a constant
            dependent on the coefficients of the Taylor series expansion. Since
            \begin{equation}
                \half \leq 1 - k N^{-1}
            \end{equation}
            The \(\zeta_H\) terms are bounded by
            \begin{subequations}
                \begin{align}
                    \zeta_H\left(2 + q - n, \half\right)
                    &=
                    \sum_{j = 0}^\infty \dfrac{1}{(\half + j)^{2 + q - n}}    \\
                    &\leq
                    2^{2 + q - n} + \sum_{j = 1}^\infty \dfrac{1}{j^2}        \\
                    &= 2^{2 + q - n} + 1
                \end{align}
            \end{subequations}
            due to the assumption on the regularity of \(f\). Hence
            \begin{equation}
                \label{eq:f-remainder-hurwitz-part}
                \zeta_H(2 + q - n, 1 + k N^{-1})
                +
                \zeta_H(2 + q - n, 1 - k N^{-1})
                \leq
                2^{3 + q - n} + 2.
            \end{equation}
            Hence, substituting \(N = 1/\Delta x\), Equation
            \eqref{eq:f-remainder-taylor-part}, and Equation
            \eqref{eq:f-remainder-hurwitz-part} into Equation
            \eqref{eq:f-k-difference-with-zeta} yields
            \begin{align}
                \label{eq:f-h-k-minus-f-k-derivative-bound}
                \left|
                \dfrac{d^n}{dy^n}
                \left(
                \hat{f}^h_k(y) - \hat{f}_k(y)
                \right)
                \right|
                &\leq
                C(f)
                \left[
                \left(
                C_T k^2\Delta x^2
                \right)
                k^{-q + n}
                +
                (2^{3 + q - n} + 2)
                \Delta x^{q - n}
                \right]                                                       \\
                &\leq
                C(f)
                \left[
                C_T k^{-q + n + 2} \Delta x^2
                +
                (2^{3 + q - n} + 2) \Delta x^{q - n}
                \right].
            \end{align}
            Since \(\hat{v}_k\) is a weak solution to Equation
            \eqref{eq:cdr-2d-semidiscretization}, it satisfies the derivative
            estimate
            \begin{subequations}
                \begin{align}
                    \|\hat{v}_k\|_{H^{r + 2}}
                    &\leq
                    C([0, L], r)
                    \left(
                    1 +
                    | b_1 | +
                    \left|\dfrac{\lambda_{A,k}}{\lambda_{M,k}}\right|
                    \right)
                    \|\hat{f}^h_k\|_{H^r}                                     \\
                    \label{eq:v-k-hat-regularity-estimate}
                    &\leq
                    C([0, L], r)
                    \left(
                    1 +
                    | b_1 | +
                    \left|\dfrac{\lambda_{A,k}}{\lambda_{M,k}}\right|
                    \right)
                    C(f) k^{-q + r}.
                \end{align}
            \end{subequations}
            Hence, substituting Equation \eqref{eq:R-bound-from-Taylor},
            Equation \eqref{eq:f-h-k-minus-f-k-derivative-bound}, and Equation
            \eqref{eq:v-k-hat-regularity-estimate} into Equation
            \eqref{eq:e-k-original-linf-estimate} yields
            \begin{subequations}
                \begin{align}
                    \nonumber
                    \left\|
                    \dfrac{d^n}{dy^n}
                    \hat{e}_k
                    \right\|_{L^\infty}
                    &\leq
                    C([0, L], n - 1) C(f)
                    \left(
                    1 +
                    |b_1| +
                    \left|4 \pi^2 k^2 + 2 \pi I k b_0 + c\right|
                    \right)
                    \bigg(
                    C_T k^{1 - q + n} \Delta x^2                              \\
                    &\phantom{\leq}
                    +
                    (2^{2 + q - n} + 2)
                    \Delta x^{q - n}
                    + C_R(b_0) C([0, L], n) k^4 \Delta x^2
                    \left(
                    1 +
                    |b_1| +
                    \left|\dfrac{\lambda_{A,k}}{\lambda_{M,k}}\right|
                    \right)
                    k^{-q + \max(n - 3, 0)}
                    \bigg)                                                    \\
                    \nonumber
                    &\leq
                    C_6
                    \bigg(
                    k^{4 - q + n} \Delta x^2
                    + (2^{2 + q - n} + 2) k^2 \Delta x^{q - n}
                    + k^{8 - q + \max(n - 3, 0)} \Delta x^2
                    \bigg)
                \end{align}
            \end{subequations}
            where
            \begin{equation}
                C_6 = \CSixSymbolic
            \end{equation}
            is a constant independent of \(k\) and \(\Delta x\). Due to the
            regularity assumption on \(f\) the exponent is bounded by \(10
            + \max(n - 3, 0) \leq q\): hence this sum converges and
            \begin{equation}
                \left|\sum_{k = -\infty}^\infty
                \exp(2 \pi I k \delta_i)
                \dfrac{d^n}{dy^n}
                \hat{e}_k(y)
                \right|
                \leq
                \CSixSymbolic \Delta x^2
            \end{equation}
            since, for \(|k| > 1/(2 \Delta x)\), due to the regularity
            assumption on \(f\)
            \begin{equation}
                \left|
                \dfrac{d^n}{dy^n}
                \hat{e}_k(y)
                \right|
                =
                \left|
                \dfrac{d^n}{dy^n}
                \hat{u}_k(y)
                \right|
                \leq C k^{-18 + p}
                \leq C k^{-16 + p} \Delta x^2
            \end{equation}
            for some constant \(C\) dependent on \(C(f)\), so the contribution
            from the unresolved modes is also bounded by a constant multiple of
            \(\Delta x^2\).

            \paragraph{Bounding \texorpdfstring{\((2)\)}{2}}
            Consider the error in the full discretization relative to
            the semidiscretization (i.e., \(\hat{u}_k^h - \hat{v}_k\)). The
            finite element solution \(\hat{u}_k^h\) satisfies the boundary value
            problem
            \begin{equation}
                \int_0^L
                \hat{u}^h_{k,y} \bar{\phi}_y
                + b_1 \hat{u}^h_{k,y} \bar{\phi}
                + \dfrac{\lambda_{A,k}}{\lambda_{M,k}}
                \hat{u}^h_k \bar{\phi}
                dy
                =
                \int_0^L
                \hat{f}^h_k
                \bar{\phi}
                dy,\,
                \forall \phi \in V^{\Delta y}([0, L]).
            \end{equation}
            while the semidiscretization \(\hat{v}_k\) satisfies Equation
            \eqref{eq:cdr-2d-semidiscretization}. Hence Theorem
            \ref{thm:cdr-1d-theorem} bounds the difference between each
            \(\hat{u}^h_k - \hat{v}_k\) term, where
            \begin{equation}
                D =
                \dfrac
                {\sqrt{b_1^2 + \dfrac{4 \lambda_{A,k}}{\lambda_{M,k}}}}
                {2}
            \end{equation}
            has a positive real part since \(c > 0\). Since the eigenvalue ratio
            is bounded by Equations
            \eqref{eq:cdr-eigenvalue-ratio-lower-bound}%
            -\eqref{eq:cdr-eigenvalue-ratio-upper-bound},
            there exists a constant \(\kappa\) independent of \(k\), but
            dependent on \(b_1\) and \(c\), such that
            \begin{subequations}
                \begin{align}
                    1
                    &\leq
                    |D| \leq \kappa (1 + |k|)                                 \\
                    \dfrac{\BoundednessSymbol}{\sqrt{\CoercivitySymbol}}
                    &\leq
                    \kappa (1 + k^2)
                \end{align}
            \end{subequations}
            due to Equation \eqref{eq:cdr-eigenvalue-ratio-upper-bound} and the
            bound \(|k| \leq 1/(2 \Delta x)\). Let
            \begin{equation}
                C_7 = C_7(b_1, p, n, L) = C_5(b_1, 1, p, n, L)
            \end{equation}
            be the constant used in Theorem \ref{thm:cdr-1d-theorem} with \(m =
            1\). Then, applying Theorem \ref{thm:cdr-1d-theorem}, the error in
            the derivative in the boundary cell \(I_N\) is
            \begin{align}
                \left\|\dfrac{d^n}{dy^n}(\hat{v}_k - \hat{u}_k^h)\right\|
                _{\lastCellLinfy}
                \nonumber
                &\leq
                C_7 \bigg[|D|^{p + 5}
                \max\left(2, \dfrac{2}{|D| \Delta y}\right)
                \dfrac{\gamma^4}{\alpha^2}
                \|\hat{v}_k^{(2)}\|_{L^\infty} \Delta y^{3}                   \\
                &\phantom{\leq C_7 \bigg\lbrack} + |D|^2 \dfrac{\gamma^2}{\alpha}
                \|\hat{v}_k\|_{W^{2 + p, \infty}} \Delta y^{2 + p - n}
                \bigg]                                                        \\
                &\leq
                \nonumber
                C_7 \bigg[\kappa^{p + 9} (1 + |k|)^{p + 5} (1 + k^2)^4
                \max\left(2, \dfrac{2}{\Delta y}\right)
                \|\hat{v}_k^{(2)}\|_{L^\infty} \Delta y^3                     \\
                \label{eq:v-hat-k-dy-error-with-kappa}
                &\phantom{\leq C_7 \bigg\lbrack}
                + \kappa^4 (1 + |k|)^2
                (1 + k^2)^2
                \|\hat{v}_k\|_{W^{2 + p, \infty}} \Delta y^{2 + p - n}
                \bigg]
            \end{align}
            By the fundamental theorem of calculus
            \begin{equation}
                \label{eq:v-k-hat-linf-regularity}
                \left\|
                \hat{v}_k^{(r)}
                \right\|_{L^\infty}
                \leq C([0, L])
                \left\|\hat{v}_k^{(r)}\right\|_{H^1}
                \leq C([0, L])
                \left\|\hat{v}_k\right\|_{H^{r + 1}}
            \end{equation}
            where \(C([0, L])\) is a constant dependent on the domain. Hence, by
            Equation \eqref{eq:v-k-hat-regularity-estimate}
            \begin{equation}
                \left\|
                \hat{v}_k
                \right\|_{W^{2 + p, \infty}}
                \leq
                C([0, L], 2 + p)
                \left(
                1 +
                |b_1| +
                \left|\dfrac{\lambda_{A,k}}{\lambda_{M,k}}\right|
                \right)
                C(f) k^{-q + p + 1}
            \end{equation}
            where \(C([0, L], 2 + p)\) is a constant dependent on both the
            domain and the polynomial degree. Combining the elliptic regularity
            estimate given by Equation \eqref{eq:v-k-hat-linf-regularity} and
            the error bound given by \eqref{eq:v-hat-k-dy-error-with-kappa}
            yields
            \begin{align}
                \nonumber
                \left\|\dfrac{d^n}{dy^n} (\hat{v}_k - \hat{u}_k)\right\|
                _{\lastCellLinfy}
                &\leq
                C_7
                C([0, L], 2 + p)
                \left(
                1 +
                |b_1| +
                \left|\dfrac{\lambda_{A,k}}{\lambda_{M,k}}\right|
                \right)
                C(f)
                \bigg[                                                        \\
                \nonumber
                &\phantom{\leq C_7 \bigg\lbrack + }
                \kappa^{p + 9} (1 + |k|)^{p + 5} (1 + k^2)^4
                \max\left(2, \dfrac{2}{\Delta y}\right)
                k^{-q + 1}
                \Delta y^3                                                    \\
                &\phantom{\leq C_7 \bigg\lbrack}
                + \kappa^4 (1 + |k|)^2
                (1 + k^2)^2
                k^{-q + p + 1}
                \Delta y^{2 + p - n}
                \bigg].
            \end{align}
            Note that the eigenvalue ratio \(\lambda_{A,k}/\lambda_{M,k}\)
            scales like \(O(k^2)\). Hence, by the assumption that \(q \geq 18 +
            p\) the summation over \(k\) converges:
            \begin{equation}
                \sum_{k=-\infty}^\infty
                \left|
                \dfrac{d^n}{dy^n}
                \left(
                \hat{v}_k(y) - \hat{u}_k^h(y)
                \right)
                \exp(2 \pi I k \delta_i)
                \right|
                = \CEightSymbolic (\Delta y^2 + \Delta y^{2 + p - n}).
            \end{equation}
            for a constant \(C_8\) independent of \(\Delta x\) and \(\Delta y\).
            Hence, by the triangle inequality
            \begin{equation}
                \left|\dfrac{d^n}{dy^n}(u - u^h)\right|
                _{(\delta_i, \delta_{N^*})}
                \leq
                \CSixSymbolic \Delta x^2
                +
                \CEightSymbolic
                \left(\Delta y^2 + \Delta y^{2 + p - n}\right)
            \end{equation}
            which is the desired result.
        \end{proof}

\section{Numerical Results}
    \label{sec:numerical-results}
    \subsection{Overview}
        This section summarizes numerical experiments verifying the rates
        of convergence proven in Theorems \ref{thm:cdr-1d-theorem} and
        \ref{thm:cdr-2d}. All experiments were performed with a finite element
        discretization of \eqref{eq:cdr-equation} in either one or two spatial
        dimensions, utilizing the \texttt{deal.II} library's \cite{dealii85}
        support for tensor product \textit{hp}-finite elements. For further
        information on algorithms and data structures for general \textit{hp}
        codes for continuous finite elements see \cite{BangerthHP}. The
        resulting linear systems were solved with the standard PETSc
        \cite{petsc-user-ref, petsc-efficient} GMRES linear solver and the
        \texttt{BoomerAMG} algebraic multigrid preconditioner from the HYPRE
        library \cite{hypre-web-page}. The preconditioner was configured to use
        SOR/Jacobi relaxation and Gaussian elimination for the coarse solve. The
        linear solver used a tolerance of \(10^{-14}\) times the Euclidean norm
        of the right-hand side vector. We verify the rates of convergence with
        respect to the seminorms defined by Equations
        \eqref{eq:h1-b-error-definition}-\eqref{eq:h2-b-error-definition} from
        Theorem \ref{thm:cdr-1d-theorem} and Theorem \ref{thm:cdr-2d} by
        performing uniform grid refinement studies.

    \subsection{1D Numerical Results}
        \label{subsec:numerical-results-1d}
        This subsection presents numerical verification of the convergence
        rates proven in Theorem \ref{thm:cdr-1d-theorem} for the seminorms
        defined by Equations
        \eqref{eq:h1-b-error-definition}-\eqref{eq:h2-b-error-definition}. We
        use the method of manufactured solutions to derive a forcing function
        for the exact solution
        \begin{equation}
            \label{eq:1d-exact-solution}
            u(x) = \sin(10 x)
        \end{equation}
        to Equation \eqref{eq:cdr-equation-oned} with \(b = 1\) and \(c = 2\).
        Figures \ref{fig:oned-h1-convergence}-\ref{fig:oned-h2-convergence}
        depict the errors in the pointwise boundary first and second derivative
        seminorms. The resulting finite element space is notated as
        \(Q^m{\hyphen}Q^{m + p}\), where \(m \in\{1, 2\}\) is the polynomial
        degree on interior cells and \(m + p\) (with \(p \in\{0,1,2,3\}\)) is
        the polynomial degree on boundary cells. These figures illustrate the
        two different rates of convergence for the boundary derivatives: The
        error in the boundary derivative depends both on the local polynomial
        degree (i.e., \(m + p\)) and on the approximation order at the last
        interior mesh vertex, which is (by Theorem
        \ref{thm:last-interior-vertex-superconvergence}) \(2 m + 1\). Hence, by
        Theorem \ref{thm:cdr-1d-theorem} the asymptotic convergence rate for the
        \(n\)th derivative should be \(\min(2 m, m + p - n + 1)\), which is what
        we observe in Figures
        \ref{fig:oned-h1-convergence}-\ref{fig:oned-h2-convergence}.
        \begin{figure}[!]
            \centering
            \includegraphics[width=3.2in]{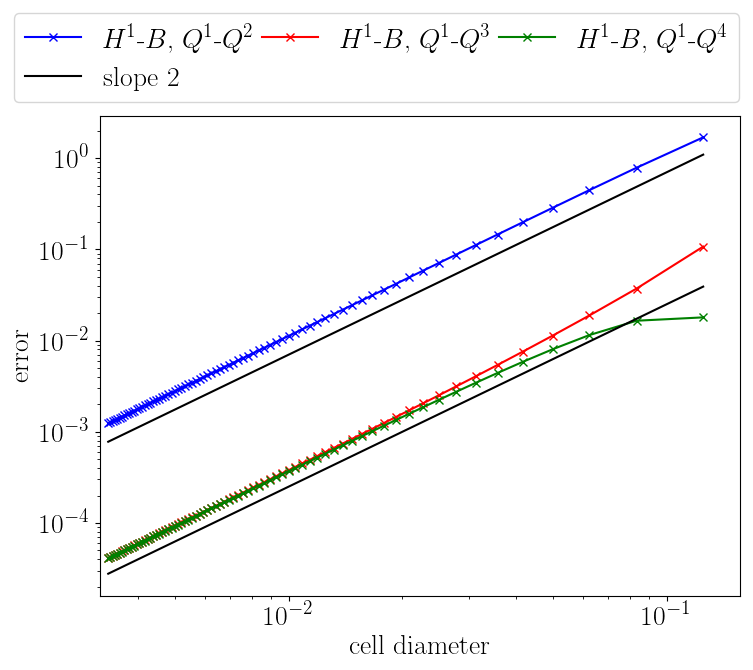}
            \includegraphics[width=3.2in]{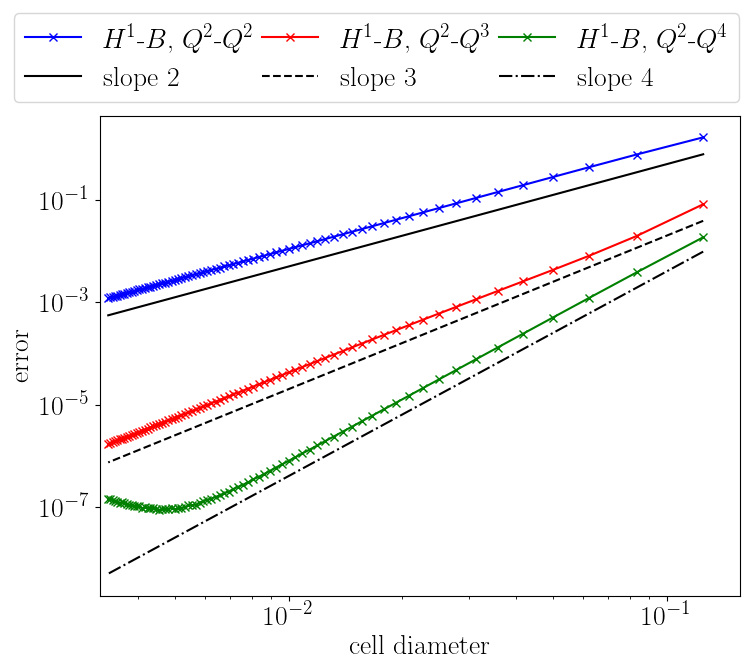}
            \caption
            {
            Rates of convergence in the \(H^1{\hyphen}B\) seminorm with exact
            solution \eqref{eq:1d-exact-solution}. The solution to the
            discretization using quadratic elements on the interior encounters
            roundoff error after sufficient grid refinement. These results
            verify the convergence rate proven in Theorem
            \ref{thm:cdr-1d-theorem}, i.e., the rate of convergence in the
            boundary derivatives is limited by the rate of convergence at the
            last interior mesh vertex and the polynomial degree on the boundary
            cell.
            }
            \label{fig:oned-h1-convergence}
        \end{figure}

        \begin{figure}[!]
            \centering
            \includegraphics[width=3.2in]{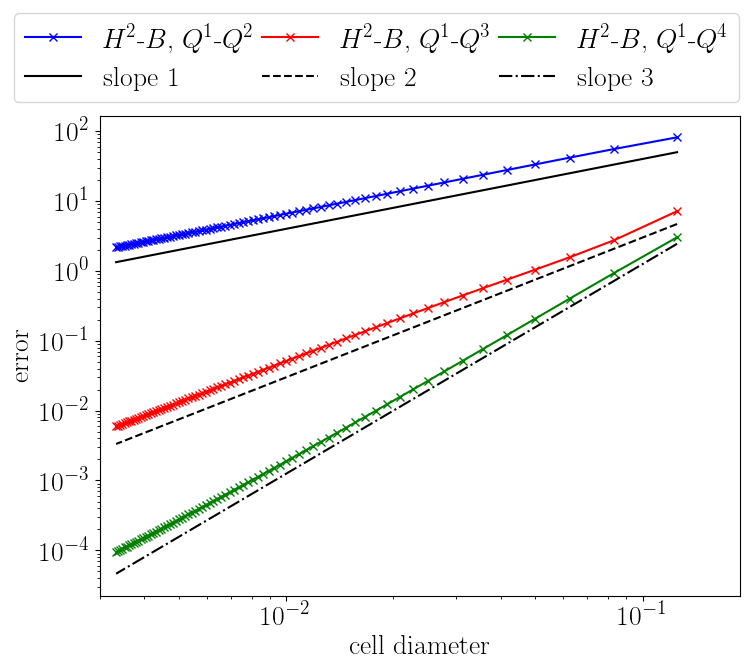}
            \includegraphics[width=3.2in]{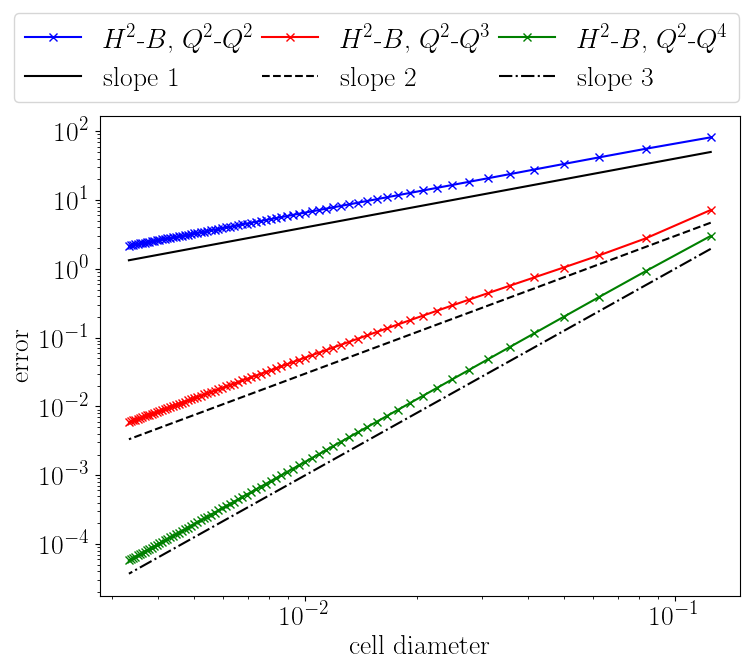}

            \caption{Rates of convergence in the \(H^2{\hyphen}B\) seminorm with
            exact solution \eqref{eq:1d-exact-solution}. The second boundary
            derivative error with the \(Q^1{\hyphen}Q^2\), \(Q^1{\hyphen}Q^3\),
            and \(Q^1{\hyphen}Q^4\) discretizations is dominated by the local
            error instead of the coupling error, resulting in third order
            convergence for sufficient boundary \(p\)-refinement.}
            \label{fig:oned-h2-convergence}
        \end{figure}

    \subsection{2D Numerical Results}
        This subsection presents numerical verification of the convergence
        rates proven in Theorem \ref{thm:cdr-2d} (i.e., derivative convergence
        rates for interior bilinear elements and a periodic boundary condition
        along the non-enriched boundaries). Additional numerical
        experiments demonstrate that these improved convergence rates can be
        obtained in other geometries, such as a square with only Dirichlet
        boundary conditions and a disk. In all cases, the discretization
        consists of bilinear elements on all interior cells and \(p\)-refinement
        limited to boundary cells. All numerical experiments, unless
        otherwise noted, use anisotropic (i.e., only in the normal direction)
        \(p\)-refinement on boundary cells with Dirichlet boundary conditions.
        We use the method of manufactured solutions to derive forcing functions.
        All test problems use \(\vec{b} = (1, 1)\) and \(c = 2\).

        \subsubsection{The Necessity of Normal \texorpdfstring{\(p\)}{\emph{p}}-Refinement}
            An important part of the proof for Theorem \ref{thm:cdr-2d} is
            translation-invariant property of the discretization in the periodic
            direction. Numerical experiments, summarized in Figure
            \ref{fig:nonnormal-p-refinement-convergence}, indicate that this is
            not merely a convenient assumption: in the case of interior bilinear
            elements, the discretization must be equivalent to a tensor product
            of two one-dimensional discretizations in order to obtain the
            improved convergence rates in the seminorms defined by Equations
            \eqref{eq:h1-b-error-definition}-\eqref{eq:h2-b-error-definition}.

            To demonstrate this, we consider discretizations with interior
            bilinear elements and either \emph{normal \(p\)-refinement} or
            \emph{isostropic \(p\)-refinement}, where the second case still uses
            continuity constraints to obtain a conforming solution. Notate
            the polynomial space on each boundary cell, \(P^m \otimes P^{m +
            p}\), by \(Q^{(m, m + p)}\), where \(m\) is the degree in the
            tangential direction and \(m + p\) is the degree in the normal
            direction. These numerical experiments use the manufactured
            solution
            \begin{equation}
                \label{eq:periodic-2d-solution}
                u_1(x, y) = (y^3 + \exp(-y^2) + \sin(4.5 y^2) + \sin(20 y))
                (20 \cos(4 \pi x) + 0.1 \sin(20 \pi x) - 80 \sin(6 \pi x)).
            \end{equation}

            \begin{figure}[!]
                \centering
                \includegraphics[width=3.2in]{periodic-nonnormal-convergence.png}
                \includegraphics[width=3.2in]{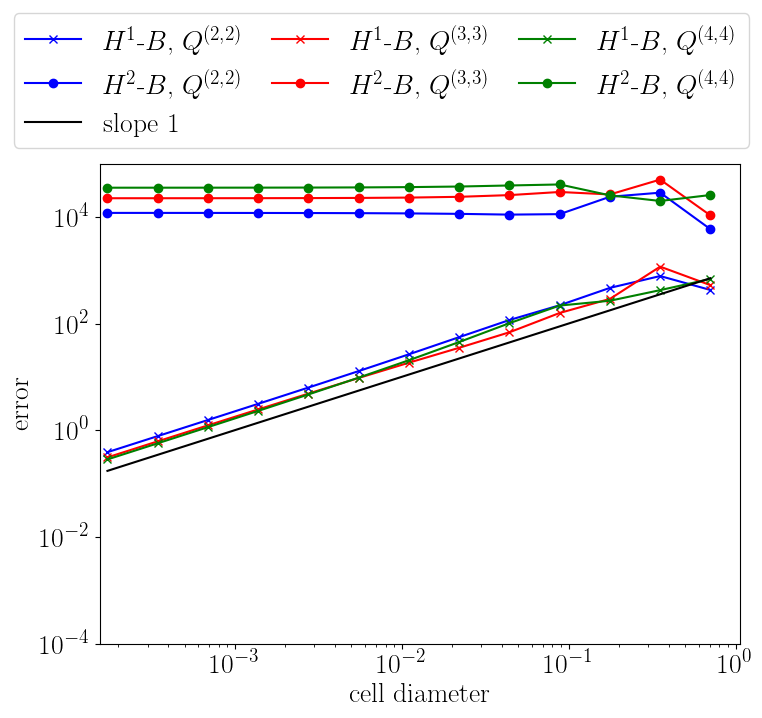}

                \caption{Rates of convergence for a solution to Equation
                \eqref{eq:cdr-equation} with periodic boundary conditions
                in the \(x\) direction. One set of discretizations uses
                \(p\)-refinement in just the normal direction (left, with the
                scheme proposed by Figure \ref{fig:show-nonnormal-elimination})
                and the other uses isotropic \(p\)-refinement (right). The
                boundary cell polynomial spaces are notated as \(Q^{(m, m +
                p)}\). These results indicate that the tensor product structure
                assumed in Theorem \ref{thm:cdr-2d} is necessary.}
                \label{fig:nonnormal-p-refinement-convergence}
            \end{figure}

            The results in Figure \ref{fig:nonnormal-p-refinement-convergence}
            show that the tensor-product structure of the discretization used in
            Theorem \ref{thm:cdr-2d} is necessary for achieving improved
            boundary derivative convergence rates (i.e., order \(2\) for first
            derivatives and order \(1\) or \(2\) for second derivatives).
            Performing isotropic \(p\)-refinement (which results in a larger
            finite element space then \(p\)-refinement purely in the normal
            direction) does not improve the rates of convergence even though the
            approximation space is larger.

        \subsubsection{Extension to a Nonperiodic Boundary}
            \label{subsec:nonperiodic-numerical-results}
            Theorem \ref{thm:cdr-2d} only applies to domains with periodic
            boundary conditions in one of the coordinate directions; however,
            numerical experiments indicate that the normal \(p\)-refinement
            scheme improves derivative convergence rates in more general
            geometries. To demonstrate this, we performed numerical experiments
            on a rectangular domain with Dirichlet boundary conditions, a
            \(Q^{(1 + p, 1 + p)}\) element in each corner (which corresponds to
            the tensor product of normal refinement in both directions), and
            \(Q^{(1, 1 + p)}\) elements in the other boundary cells. Like the
            other 2D numerical experiments, we only consider bilinear elements
            (\(Q^{(1, 1)}\)) on the interior of the domain. The manufactured
            solution in this test case, which is not periodic in either
            direction, is
            \begin{equation}
                \label{eq:nonperiodic-2d-solution}
                u_2(x, y) = x y \sin(20 y) + 10 \exp(-x y) \cos(15 x) + 2
                \sin(10^{y^2 + \cos(x)}) + \sin(30 x y).
            \end{equation}
            Figure \ref{fig:square-nonperiodic-domain} summarizes the
            results of these experiments. These experiments show that the
            results proven in Theorem \ref{thm:cdr-2d} generalize to a domain
            with purely Dirichlet boundary conditions.

            \begin{figure}[!]
                \centering
                \includegraphics[width=3.2in]{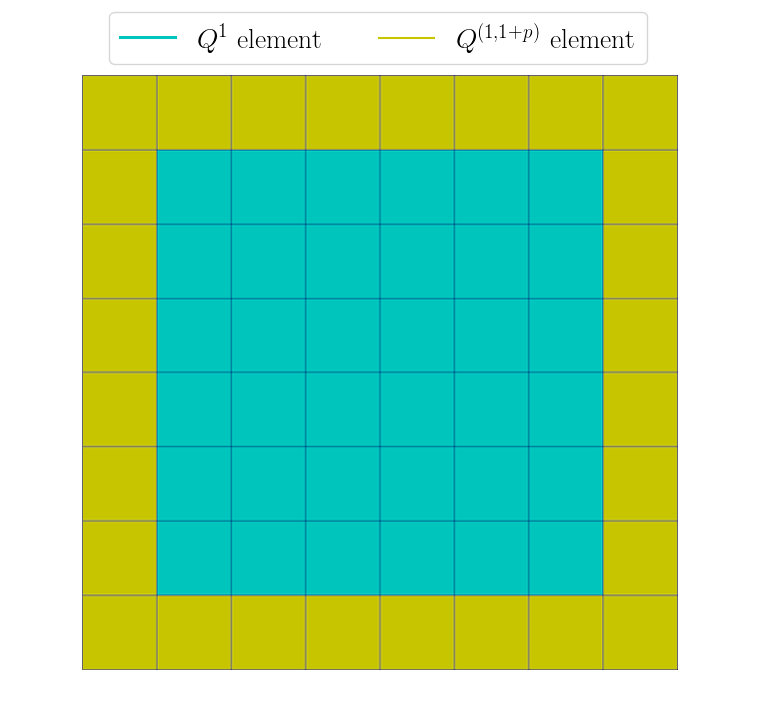}
                \includegraphics[width=3.2in]{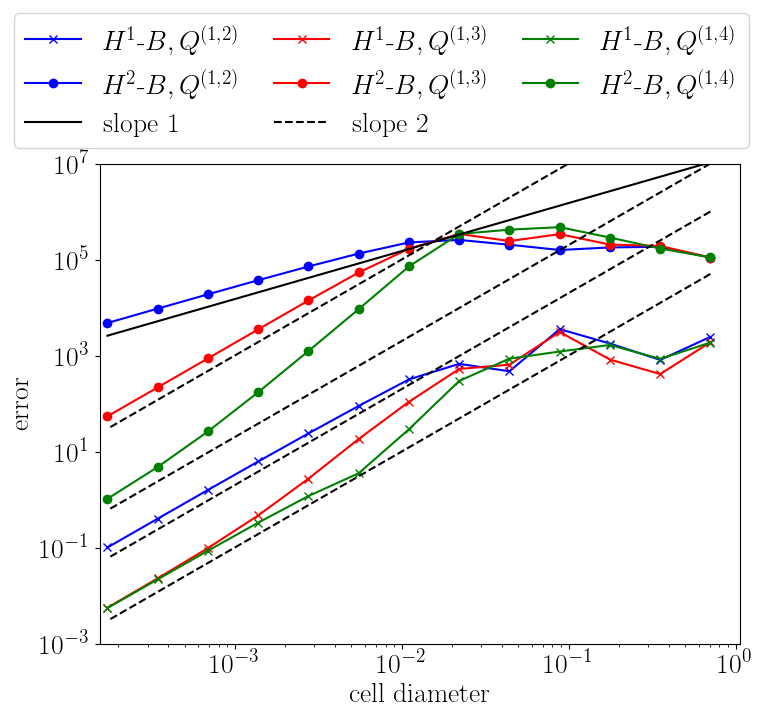}
                \caption{Depiction of the nonperiodic square domain and
                numerical convergence rates. The picture on the left shows
                which cells have been \(p\)-refined in the normal
                direction after three global grid refinements. The picture on
                the right shows the rates of convergence. This experiment
                shows that the results proven in Theorem \ref{thm:cdr-2d} hold
                when all boundaries are Dirichlet and the discretization has a
                tensor-product structure.}
                \label{fig:square-nonperiodic-domain}
            \end{figure}

        \subsubsection{Extension to a Disk With Radial \texorpdfstring{\(p\)}{\emph{p}}-Refinement}
            \label{subsec:circle-numerical-results}
            The final numerical example shows that the convergence rate proven
            in Theorem \ref{thm:cdr-2d} holds in a non-Cartesian geometry.
            Consider a disk whose central cells are aligned with the \(x,
            y\) axes, cells near the boundary are aligned with the \(r, \theta\)
            axes (i.e., they are rectangles in polar coordinates), and cells
            between these two regions are geometrically described with a
            transfinite interpolation between the Cartesian and polar regimes.
            This geometry description (i.e., Cartesian coordinates in the
            center, polar coordinates at the boundary, and a transfinite
            interpolation in between) results in a well-conditioned grid with
            boundary cells aligned with the polar coordinate axes after mesh
            refinement. A picture of the grid after one refinement is shown in
            Figure \ref{fig:circular-domain-picture}. Since the order of
            convergence under the seminorms defined by Equations
            \eqref{eq:h1-b-error-definition}-\eqref{eq:h2-b-error-definition} is
            second-order, we use the standard second-order bilinear mapping from
            the reference cell to the physical cell to perform all cell
            calculations. The manufactured solution in this test case is
            Equation \eqref{eq:periodic-2d-solution}.

            \begin{figure}[!]
                \centering
                \includegraphics[width=3.2in]{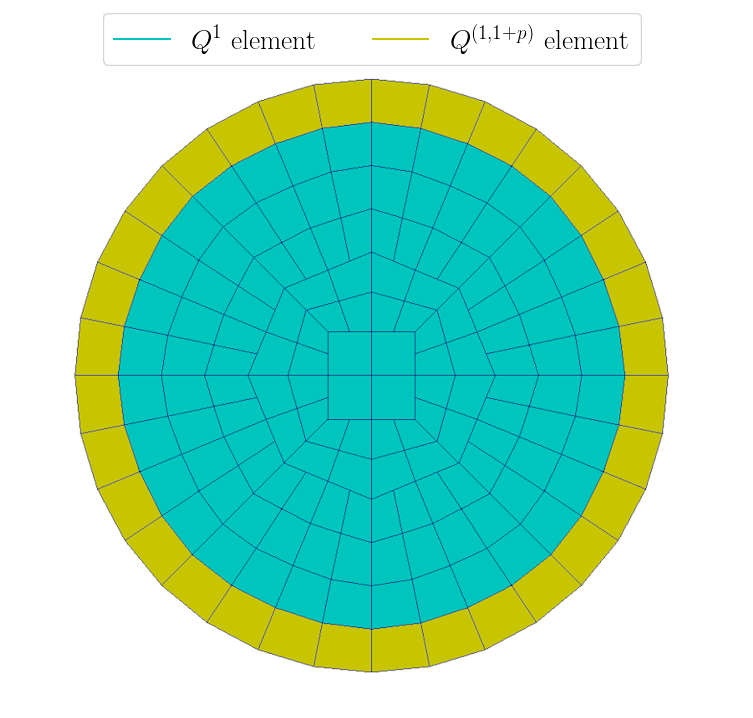}
                \includegraphics[width=3.2in]{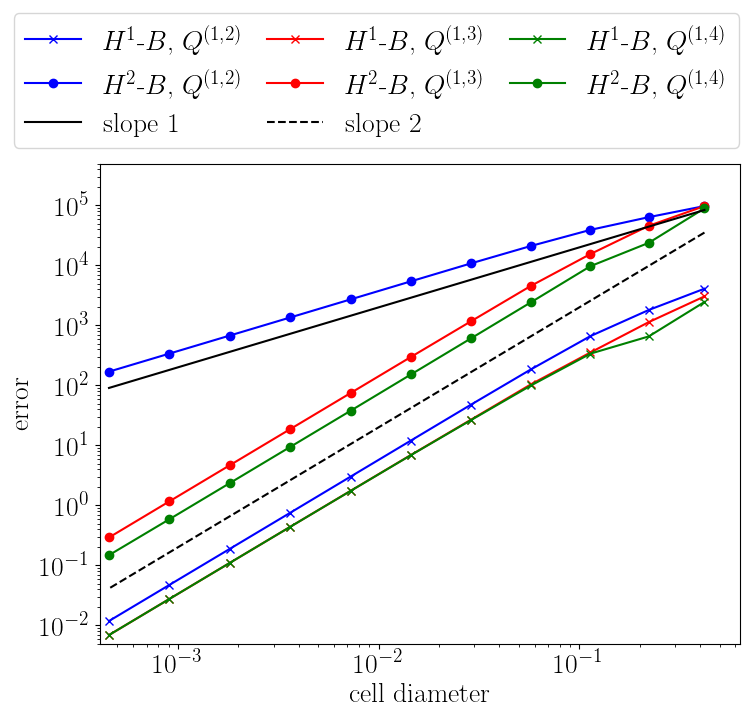}
                \caption{Depiction of the disk grid and numerical results. The
                picture on the left shows which cells have radial
                \(p\)-refinement after one global grid refinement and the
                picture on the right shows the rates of convergence: The
                \(H^1{\hyphen}B\) error plot for \(Q^{(1,3)}\) is obscured by
                the plot for \(Q^{(1,4)}\). These results show that the
                convergence rates proven in Theorem \ref{thm:cdr-2d} apply to
                more general settings where the cells are aligned with a smooth
                boundary.}
                \label{fig:circular-domain-picture}
            \end{figure}

            Based on the results in Figure \ref{fig:circular-domain-picture}, we
            conjecture that the convergence rate proven in Theorem
            \ref{thm:cdr-2d} holds for sufficiently regular grids where the
            cells near the boundary are aligned with the boundary itself (i.e.,
            the cell faces are either orthogonal or parallel to the boundary).
            This implies that the proposed \(p\)-refinement strategy will
            improve boundary derivative convergence rates when the boundary of a
            domain is sufficiently smooth.

\section{Concluding Remarks}
    \label{sec:conclusions}
    This work proposed a new method for achieving higher-order accuracy in
    boundary derivative calculations while still using lower-order finite
    elements on the interior of the domain. The method, in essence, adds new
    degrees of freedom that enrich the boundary finite elements in a direction
    normal to the boundary itself. The numerical experiments imply that
    isotropic \(p\)-refinement does not improve the order of accuracy of
    boundary derivatives: put another way, the finite element space must have a
    tensor-product structure to obtain higher-order derivative convergence on
    the boundary. The proposed method resembles a finite difference
    discretization in the sense that the higher rate of convergence is available
    at all boundary vertices and that the rate of convergence in the first and
    second derivatives, after performing at least two levels of
    \(p\)-refinement, is equal to the global \(L^\infty\) rate of convergence.

    There are several possible extensions of this work: one could derive a
    similar result to Theorem \ref{thm:cdr-2d} by interpreting higher-order
    (e.g., biquadratic) basis functions as finite difference methods with
    nonuniform stencils. Another possible direction is finding a solid
    theoretical backing for the numerical examples given in Subsection
    \eqref{subsec:nonperiodic-numerical-results} and Subsection
    \eqref{subsec:circle-numerical-results}, which show results for more general
    geometries than a square with periodic boundary conditions in the \(x\)
    direction. There are possibilities for extending the implementation as
    well: instead of Lagrange \(p\)-refinement, one could add degrees of freedom
    corresponding to normal derivatives on the boundary. Finally, since this
    method does not rely on any solution postprocessing procedures, it may be a
    useful technique to use in applications where one requires higher accuracy
    in the derivatives of a solution on a particular boundary and an energy
    estimate, such as complex boundary conditions for multiphysics applications.

\begin{appendices}
\section{Inequalities used to bound the Greens' function}
    \label{sec:greens-inequalities}
    \begin{enumerate}
        \item \begin{equation}
                    \begin{aligned}
                        \left|\dfrac{\exp(- z) - 1}{z}\right| &\leq 1
                    \end{aligned}
                \end{equation}
                as, if \(\mathrm{Im}(z) = 0\), then the function is
                maximized as \(z \rightarrow 0^+\), has negative slope for
                \(z \geq 0\), and is bounded below by \(0\). If
                \(\mathrm{Im}(z) \neq 0\) then by the maximum modulus
                principle this function is bounded above by the value of
                the analytic continuation along the boundary
                \(\mathrm{Re}(z) = 0\), which is also \(1\). This implies
                that
                \begin{equation}
                    \left|
                    \dfrac{\exp(-2 D \Delta x) - 1}{2 D}
                    \right|
                    \leq \Delta x.
                \end{equation}
          \item \begin{subequations}
                    \begin{align}
                        \left|
                        \dfrac{\exp(2 D L) \pm \exp(2 D \Delta x)}
                        {\exp(2 D L) - 1}
                        \right|
                        &\leq
                        2 \left|
                        \dfrac{\exp(2 D L)}{\exp(2 D L) - 1}
                        \right|                                               \\
                        &\leq
                        2
                        \dfrac{e}{e - 1}                                      \\
                        &\leq
                        4
                    \end{align}
                \end{subequations}
                since \(\mathrm{Re}(D)\) and \(L\) are greater than unity.
        \item If \(L - \Delta x \leq x \leq L\) then
              \begin{subequations}
                \begin{align}
                        \left|
                        \dfrac
                        {
                        \exp(-D(L + \Delta x - x)) \pm
                        \exp(D(L - \Delta x - x))
                        }
                        {2 D}
                        \right|
                        &\leq
                        \left|
                        \dfrac
                        {
                        \exp(-D(L + \Delta x - x))
                        }
                        {2 D}
                        \right|
                        +
                        \left|
                        \dfrac
                        {
                        \exp(D(L - \Delta x - x))
                        }
                        {2 D}
                        \right|                                               \\
                        &\leq \dfrac{1}{|D|}
                \end{align}
              \end{subequations}
              since \(0 \leq L + \Delta x - x\) and \(L - \Delta x - x
              \leq 0\), so the arguments of both exponentials have
              negative real parts.
    \end{enumerate}

\section{Bounding the ratio of \texorpdfstring{\(\exp\)}{exp} and \texorpdfstring{\(\sinh\)}{sinh}}
    \label{sec:appendix-exp-sinh-ratio}
    for \(z = a + b I\) and \(a > 0\):
    \begin{subequations}
        \begin{align}
            |\sinh(z)|^2
            &=
            \left(\cos(b) \cosh(a)\right)^2 + \left(\sin(b) \sinh(a)\right)^2 \\
            &\geq
            \left(\cos(b) \sinh(a)\right)^2 + \left(\sin(b) \sinh(a)\right)^2 \\
            &=
            \left(\sinh(a)\right)^2                                           \\
            &\geq a^2
        \end{align}
    \end{subequations}
    Hence, if \(|b| \leq |a|\)
    \begin{equation}
        2 |\sinh(z)| \geq |z|.
    \end{equation}
    Hence
    \begin{subequations}
        \begin{align}
            \dfrac{\exp(z)}{\sinh(z)}
            &= 2 + \dfrac{\exp(-z)}{\sinh(z)}                                 \\
            \left|\dfrac{\exp(z)}{\sinh(z)}\right|
            &= 2 + \left|\dfrac{\exp(-z)}{\sinh(z)}\right|                    \\
            &\leq
            2 + \dfrac{2}{|z|}
        \end{align}
    \end{subequations}
\end{appendices}

\bibliographystyle{plain}
\bibliography{paper}

\begin{thebibliography}{10}

\bibitem{AinsworthHPAspects}
M.~Ainsworth and B.~Senior.
\newblock {A}spects of an adaptive \(hp\)-finite element method: {A}daptive
  strategy, conforming approximation and efficient solvers.
\newblock {\em CMAME}, 150:65--87, 1997.

\bibitem{dealii85}
D.~Arndt, W.~Bangerth, D.~Davydov, T.~Heister, L.~Heltai, M.~Kronbichler,
  M.~Maier, J.-P. Pelteret, B.~Turcksin, and D.~Wells.
\newblock The \texttt{deal.II} library, version 8.5.
\newblock {\em Journal of Numerical Mathematics}, 2017.

\bibitem{BabuskaStrouboulisFEReliability}
I.~Babu{\u s}ka and T.~Strouboulis.
\newblock {\em {T}he {F}inite {E}lement {M}ethod and its {R}eliability}.
\newblock Numerical Mathematics and Scientific Computation. Oxford Science
  Publications, 2001.

\bibitem{BabuskaOptimalConvergencePVersion}
I.~Babu{\u s}ka and M.~Suri.
\newblock The optimal convergence rate of the \(p\)-version of the finite
  element method.
\newblock {\em SIAM J. Numer. Anal.}, 24(4), 1987.

\bibitem{petsc-user-ref}
Satish Balay, Shrirang Abhyankar, Mark~F. Adams, Jed Brown, Peter Brune, Kris
  Buschelman, Lisandro Dalcin, Victor Eijkhout, William~D. Gropp, Dinesh
  Kaushik, Matthew~G. Knepley, Lois~Curfman McInnes, Karl Rupp, Barry~F. Smith,
  Stefano Zampini, Hong Zhang, and Hong Zhang.
\newblock {PETS}c users manual.
\newblock Technical Report ANL-95/11 - Revision 3.7, Argonne National
  Laboratory, 2016.

\bibitem{petsc-efficient}
Satish Balay, William~D. Gropp, Lois~Curfman McInnes, and Barry~F. Smith.
\newblock Efficient management of parallelism in object oriented numerical
  software libraries.
\newblock In E.~Arge, A.~M. Bruaset, and H.~P. Langtangen, editors, {\em Modern
  Software Tools in Scientific Computing}, pages 163--202. Birkh{\"{a}}user
  Press, 1997.

\bibitem{BangerthHP}
W.~Bangerth and H.~Kayser-{H}erold.
\newblock {D}ata {S}tructures and {R}equirements for \(hp\) {F}inite {E}lement
  {S}oftware.
\newblock {\em ACM Transactions on Mathematical Software}, 36, 2009.

\bibitem{banksGalerkinDifference}
J.~W. Banks and T.~Hagstrom.
\newblock {On Galerkin Difference Methods}.
\newblock {\em {J. Comp. Phys.}}, 313:310--327, {2016}.

\bibitem{BanksAMPCompressibleNonlinear}
J.~W. Banks, W.~D. Henshaw, A.~K. Kapila, and D.~W. Schwendeman.
\newblock An added-mass partition algorithm for fluid--structure interactions
  of compressible fluids and nonlinear solids.
\newblock {\em {J. Comp. Phys.}}, 305:1037--1064, {2015}.

\bibitem{BrennerScottBook}
S.~C. Brenner and L.~R. Scott.
\newblock {\em {The Mathematical Theory of Finite Element Methods}}, volume~15.
\newblock {Springer}, 2002.
\newblock {Texts in Applied Mathematics}.

\bibitem{CareyDerivativeFromFEM1982}
G.~F. Carey.
\newblock {D}erivative {C}alculation {F}rom {F}inite {E}lement {S}olutions.
\newblock {\em CMAME}, 35:1--14, 1982.

\bibitem{CareyApproximateBoundaryFlux84}
G.~F. Carey, S.~S. Chow, and M.~K. Seager.
\newblock {A}pproximate {B}oundary-{F}lux {C}alculations.
\newblock {\em CMAME}, 50:107--120, 1985.

\bibitem{CiarletBook}
Philippe~G. Ciarlet.
\newblock {\em {T}he {F}inite {E}lement {M}ethod for {E}lliptic {P}roblems},
  volume~40.
\newblock Society of Industrial and Applied Mathematics, 2002.
\newblock Classics in Applied Mathematics.

\bibitem{DouglasLinfTensor}
J.~Douglas~Jr., T.~Dupont, and Mary~F. Wheeler.
\newblock {An $L^\infty$ Estimate and a Superconvergence Result for a Galerkin
  Method for Elliptic Equations Based on Tensor Products of Piecewise
  Polynomials}.
\newblock {\em R.A.I.R.O.}, 8(2):61--66, 1974.

\bibitem{GalerkinProcedureFluxBoundary}
J.~Douglas~Jr., T.~Dupont, and Mary~F. Wheeler.
\newblock A {G}alerkin {P}rocedure {F}or {A}pproximating {T}he {F}lux on the
  {B}oundary for {E}lliptic and {P}arabolic {B}oundary {V}alue {P}roblems.
\newblock {\em R.A.I.R.O.}, 11(4):47--59, 1974.

\bibitem{TwoPointBVPConvergence}
Jim Douglas~Jr. and Todd Dupont.
\newblock Galerkin {A}pproximations for the {T}wo {P}oint {B}oundary {P}roblem
  {U}sing {C}ontinuous, {P}iecewise {P}olynomial {S}paces.
\newblock {\em Numer. Math.}, 22:99--109, 1974.

\bibitem{QuasiProjectionAnalysis1978}
Jim Douglas~Jr., Todd Dupont, and Mary~F. Wheeler.
\newblock A {Q}uasi-{P}rojection {A}nalysis of {G}alerkin {M}ethods for
  {P}arabolic and {H}yperbolic {E}quations.
\newblock {\em Mathematics of Computation}, 32(142):345--362, Apr 1978.

\bibitem{EvansBook}
{Evans, L.C.}
\newblock {\em {Partial Differential Equations}}, volume~19 of {\em {Graduate
  Studies in Mathematics}}.
\newblock {American Mathematical Society}, 2007.

\bibitem{GuoHessianRecovery2016}
H.~Guo, Z.~Zhang, and R.~Zhao.
\newblock {Hessian} {R}ecovery {F}or {F}inite {E}lement {M}ethods.
\newblock {\em {Math. Comp.}}, 2016.

\bibitem{HabermanPDEs}
R.~Haberman.
\newblock {\em {Applied Partial Differential Equations}}.
\newblock Pearson, 2013.

\bibitem{hypre-web-page}
Lawrence Livermore National Laboratory.
\newblock {\em {\sl hypre}: High Performance Preconditioners}.
\newblock \url{http://www.llnl.gov/CASC/hypre/}.

\bibitem{LaxFunctionalAnalysis}
P.~Lax.
\newblock {\em {Functional Analysis}}.
\newblock {Pure and applied mathematics}. {Wiley-Interscience}, 2002.

\bibitem{LiStablePartitionedFSIBeams}
L.~Li, W.~D. Henshaw, J.~W. Banks, D.~W. Schwendeman, and A.~Main.
\newblock {A stable partitioned FSI algorithm for incompressible flow and
  deforming beams}.
\newblock {\em {J. Comp. Phys.}}, 312:272--306, 2016.

\bibitem{Roos2008robust}
H.~G. Roos, M.~Stynes, and L.~Tobiska.
\newblock {\em Robust Numerical Methods for Singularly Perturbed Differential
  Equations: Convection-Diffusion-Reaction and Flow Problems.}, volume~24 of
  {\em Springer Series in Computational Mathematics}.
\newblock Springer, second edition, 2008.

\bibitem{WahlbinSuperconvergence}
Lars~B. Wahlbin.
\newblock {\em {S}uperconvergence in {G}alerkin {F}inite {E}lement {M}ethods}.
\newblock {L}ecture {N}otes in {M}athematics. Spring, 1995.

\bibitem{WellsThesis2015}
D.~R. Wells.
\newblock {\em Stabilized Reduced Order Models}.
\newblock PhD thesis, Virginia Tech, 2015.

\bibitem{WheelerLinf}
Mary~F. Wheeler.
\newblock {A}n {O}ptimal $l_\infty$ {E}rror {E}stimate for {G}alerkin
  {A}pproximations to {S}olutions of {T}wo-{P}oint {B}oundary {V}alue
  {P}roblems.
\newblock {\em SIAM J. Numer. Anal.}, 10(5):914--917, 1973.

\bibitem{WheelerGalerkinProcedureFluxBVP1973}
Mary~F. Wheeler.
\newblock A {G}alerkin {P}rocedure {F}or {E}stimating {T}he {F}lux {F}or
  {T}wo-{P}oint {B}oundary {V}alue {P}roblems.
\newblock {\em SIAM J. Numer. Anal.}, 11(4):764--768, Sep 1974.

\bibitem{ZhangNewGradientRecovery2005}
Z.~Zhang and A.~Naga.
\newblock {A} {N}ew {F}inite {E}lement {G}radient {R}ecovery {M}ethod:
  {S}uperconvergence {P}roperty.
\newblock {\em {SIAM} {J}. {S}ci. {C}omput.}, 26(4):1192--1213, 2005.

\bibitem{ZienkiewiczVolume1}
O.~C. Zienkiewicz and R.~L. Taylor.
\newblock {\em The Finite Element Method, Volume 1}.
\newblock Butterworth Heinemann, 2000.

\end{thebibliography}
\end{document}